\newtheorem{theorem}{Theorem}[section]
\newtheorem{corollary}[theorem]{Corollary}
\newtheorem{proposition}[theorem]{Proposition}
\theoremstyle{definition}
\newtheorem{definition}[theorem]{Definition}
\newtheorem{example}[theorem]{Example}
\theoremstyle{remark}
\newtheorem{Remark}[theorem]{Remark}
\numberwithin{equation}{section}
\begin{document}

\title[A Survey on Constructing Parseval Fusion Frames via Scaling Weights]{A Survey on Constructing Parseval Fusion Frames via Scaling Weights}

%%=============================================================%%
%% GivenName	-> \fnm{Joergen W.}
%% Particle	-> \spfx{van der} -> surname prefix
%% FamilyName	-> \sur{Ploeg}
%% Suffix	-> \sfx{IV}
%% \author*[1,2]{\fnm{Joergen W.} \spfx{van der} \sur{Ploeg} 
%%  \sfx{IV}}\email{iauthor@gmail.com}
%%=============================================================%%

\author[1]{\fnm{Ehsan} \sur{Ameli}}\email{eh.ameli@hsu.ac.ir}

\author*[1]{\fnm{Ali Akbar} \sur{Arefijamaal}}\email{arefijamaal@hsu.ac.ir}
\equalcont{These authors contributed equally to this work.}

\author[2,1]{\fnm{Fahimeh} \sur{Arabyani Neyshaburi}}\email{fahimeh.arabyani@gmail.com}
\equalcont{These authors contributed equally to this work.}

\affil[1]{\orgdiv{Department of Mathematics and Computer Sciences}, \orgname{Hakim Sabzevari University}, \state{Sabzevar}, \country{Iran}}

\affil[2]{\orgdiv{Department of Mathematical Sciences}, \orgname{Ferdowsi University of Mashhad}, \city{Mashhad}, \country{Iran}}

%%==================================%%
%% Sample for unstructured abstract %%
%%==================================%%

\abstract{The construction of Parseval fusion frames is highly desirable in a wide range of signal processing applications. In this paper, we study the problem of modifying the weights of a fusion frame in order to generate a Parseval fusion frame. To this end, we extend the notion of the scalability to the fusion frame setting. We then proceed to characterize scalable fusion Riesz bases and $1$-excess fusion frames. Furthermore, we provide the necessary and sufficient conditions for the scalability of certain $k$-excess fusion frames, $k\geq 2$. Finally, we present several pertinent examples to confirm the obtained results.}

\keywords{Fusion frames, scalable fusion frames, Parseval fusion frames, excess}

%%\pacs[JEL Classification]{D8, H51}

\pacs[MSC Classification]{Primary 42C15; Secondary 15A12}

\maketitle

\section{Introduction}\label{sec1}

Over the past few years, the construction of tight frames has emerged as a key concept in various areas of applied mathematics, computer science, and engineering. There have been many works to construct tight frames \cite{Cahill-Fickus,Casazza17,dual scalable frames,Gitta 13,convex geometry}. However, it is preferable to generate tight frames by simply scaling each frame vector, as this modification has no effect on the frame properties such as structure and erasure resilience. Thus, scaling appears to be a natural method for generating a Parseval frame. By the nature of scaling, we are looking for some scalars such that the scaled frame becomes a Parseval frame \cite{Gitta 13}. These types of frames possess a reconstruction formula analogous to orthonormal bases, and therefore they are easily used in a wide variety of applications such as sampling, signal processing, filtering, image processing, and in the other areas \cite{A.A.SH,Balazs,Casazza17,Gitta 13, convex geometry}.

Fusion frames are a generalization of ordinary frames in separable Hilbert spaces, first proposed by Casazza and Kutyniok in \cite{frame of subspace}. They are a novel development that provide a mathematical framework in many applications, including sensor networks, coding theory, filter bank theory, signal and image processing and wireless communications and many other fields, where cannot be modeled by ordinary frames \cite{excess of fusion,Weaving Hilbert space,Norm retrieval algorithms,Arabyani dual,A.A.SH,CasazzaFickus}.
One of the crucial properties that Parseval fusion frames have in common with fusion orthonormal bases is their useful role in signal processing. Indeed, they ensure a straightforward reconstruction of a signal and are resilient to noise \cite{CasazzaFickus}.
In this respect, the construction of Parseval fusion frames has considerable significance. Inspired by this perspective, our objective is to extend the concept of scalability to the fusion frame setting and to generate Parseval fusion frames.
A Parseval fusion frame $\mathcal{W}=\{(W_{i},\omega_{i})\}_{i \in I}$ for $\mathcal {H}$ has a property that every vector $f \in \mathcal {H}$ can be recovered via the reconstruction formula: $f=\sum_{i \in I}\omega_{i}^{2}\pi_{W_{i}}f.$ If a fusion frame is not Parseval, the reconstruction formula depends on the inverse of the fusion frame operator. This may be difficult or even impossible to compute.
Now, a central question emerges in the context of fusion frame theory is, how can a Parseval fusion frame be constructed from a given fusion frame without altering the structure of its subspaces?

In this survey, we propose an approach based on scaling weights to address this pivotal question. Moreover, if there exist weights that can turn a given fusion frame into a Parseval fusion frame, we refer to the fusion frame as weight-scalable. It is imperative to note that the weight-scalability process preserves the structure of subspaces, which aligns with the concept of scalability in frame theory. This provides further insight into the analysis and construction of Parseval fusion frames. Furthermore, we observe how the weights can be determined from those of the original fusion frame prior to construction. A natural question to ask is whether the weight-scalability in fusion frame theory enjoys similar properties as scalability in ordinary frames, and whether or not it provides a possibility for convenient reconstruction of signals. In addition, under what conditions is a fusion frame weight-scalable? This article responds to all of these questions. 

Our recent work \cite{excess of fusion} shows that in order to determine the excess of a fusion frame, it is sufficient to compute the excess of its local frame obtained from Riesz bases. Building upon this result, we are interested in investigating the weight-scalability of overcomplete fusion frames. Indeed, it is worthwhile to identify the conditions under which a $k$-excess fusion frame is weight-scalable for any $k \in \Bbb N.$

The remainder of the paper is organized as follows. After introducing fusion frames and reviewing known results on them in Section 2, our main result in Section 3 presents a way to construct Parseval fusion frames; so called the weight-scalability. We examine the weight-scalability of fusion frames and present results that are comparable to those valid for scalable frames. Furthermore, we completely characterize weight-scalable fusion Riesz bases and 1-excess fusion frames. In Section 4, we investigate the conditions under which some 2-excess fusion frames are not weight-scalable and give several examples in order to confirm the obtained results. Finally, in Section 5, we turn our attention to the weight-scalability of certain $k$-excess fusion frames for $k\geq2$ and derive an equivalent condition for the weight-scalability of 2-excess fusion frames by utilizing the acquired results.

\section{Preliminaries and Notations} 
We briefly review the concept of fusion frames and recall some of their main properties. We also present some results that will be used later in the paper. Throughout this paper, we suppose that $\mathcal {H}$ is a separable Hilbert space and $\mathcal {H}_n$ is an $n$-dimensional Hilbert space. Furthermore, $I$ and $J$ denote countable index sets and $I_{\mathcal {H}}$ denotes the identity operator on $\mathcal {H}$. We denote the set of all bounded operators on $\mathcal {H}$ by $B(\mathcal {H})$ and the orthogonal projection from Hilbert space $\mathcal {H}$ onto a closed subspace $ W \subseteq \mathcal {H}$ by $ \pi_{W}$. Moreover, we denote the null space of $T \in B(\mathcal {H})$ by $N(T)$. 
 \begin{definition}\cite{frame of subspace}
Let $\{W_{i}\}_{i \in I} $ be a family of closed subspaces of $\mathcal {H}$ and $ \{\omega_{i}\}_{i \in I} $ a family of weights, i.e.  $\omega_{i}>0 $, $ i \in I $. Then $ \mathcal{W}=\{(W_{i},\omega_{i})\}_{i \in I} $ is called a fusion frame for $\mathcal {H}$ if there exists the constants $0<A\leq B<\infty$ such that 
\begin{equation}\label{fusion frame def}
A \Vert f\Vert^{2}\leq \sum _{i \in I} \omega_{i}^2\Vert \pi_{W_{i}}f \Vert ^{2} \leq B\Vert f \Vert ^{2}, \quad(f \in \mathcal{H}).
\end{equation}
\end{definition}
The constants $A$ and $B$ are called the \textit{fusion frame bounds}. If we only have the upper bound in \eqref{fusion frame def}, then $ \mathcal{W} $ is said to be a \textit{fusion Bessel sequence}. A fusion frame is called \textit{A-tight} if $ A=B $, and \textit{Parseval} if $ A=B=1 $. If $ \omega_{i}=\omega $ for all $ i\in I $, then $ \mathcal{W} $ is called \textit{$ \omega $-uniform} and if $\textnormal{dim}W_{i}=n$ for all $ i\in I$, then $ \mathcal{W} $ is called \textit{n-equi-dimensional} fusion frame. We abbreviate 1-uniform fusion frames as $\{W_{i}\}_{i \in I}$. A family of closed subspaces $\{W_{i}\}_{i \in I} $ is said to be a \textit{fusion orthonormal basis} when $\mathcal {H}$ is the orthogonal sum of the subspaces $ W_{i}$ and it is a \textit{Riesz decomposition} of $\mathcal {H}$, if for every $ f \in \mathcal{H} $ there is a unique choice of $ f_{i}\in W_{i} $ such that $ f=\sum _{i \in I} f_{i} $. It is obvious that every fusion orthonormal basis is a Riesz decomposition for $\mathcal {H}$, and also every Riesz decomposition is a 1-uniform fusion frame for $\mathcal {H}$ \cite{frame of subspace}. Moreover, a family $\{W_{i}\}_{i \in I} $ of closed subspaces of $\mathcal {H}$ is a fusion orthonormal basis if and only if it is a 1-uniform Parseval fusion frame \cite{frame of subspace}. A fusion frame is said to be \textit{exact}, if it ceases to be a fusion frame whenever any of its elements is deleted. A family of closed subspaces $\{W_{i}\}_{i \in I} $ is called a \textit{fusion Riesz basis} whenever it is complete for $\mathcal {H}$ and there exist positive constants $C$ and $D$ such that for every finite subset $ J\subset I $ and arbitrary vector $ f_{j}\in W_{j}~ (j \in J)$, we have
\begin{equation*}\label{fusion Riesz basis}
C \sum _{j \in J}\Vert f_{j} \Vert ^{2} \leq \bigg\Vert \sum _{j \in J} \omega_{j} f_{j}\bigg\Vert ^2 \leq D\sum _{j \in J}\Vert f_{j} \Vert ^{2}.
\end{equation*}
Recall that for each sequence $\{W_{i}\}_{i \in I}$ of closed subspaces in $\mathcal {H}$, the space
\begin{equation*}
\sum_{i \in I}\bigoplus W_{i}=\left\lbrace \{f_{i}\}_{i \in I}: f_{i}\in W_{i}~,~ \sum_{i \in I}\Vert f_{i} \Vert ^{2}< \infty \right\rbrace , 
\end{equation*}
with the inner product
\begin{equation*}
\big\langle  \{f_{i}\}_{i \in I} , \{g_{i}\}_{i \in I} \big\rangle  = \sum_{i \in I}\langle f_{i} , g_{i}\rangle ,
\end{equation*}
constitutes a Hilbert space. For a Bessel sequence $\mathcal{W}=\{(W_{i},\omega_{i})\}_{i \in I}$, the \textit{synthesis operator} \hbox{$ T_{\mathcal{W}}: \sum_{i \in I}\bigoplus W_{i} \rightarrow \mathcal {H}$} is defined by 
\begin{equation*}
T_{\mathcal{W}}(\{f_{i}\}_{i \in I})=\sum_{i \in I}\omega_{i}f_{i}, \quad \{f_{i}\}_{i \in I} \in \sum_{i \in I}\bigoplus W_{i}.
\end{equation*}
Moreover, the \textit{fusion frame operator} $S_{\mathcal{W}}:\mathcal {H} \rightarrow \mathcal {H}$ defined by
\begin{equation*}
S_{\mathcal{W}}f=T_{\mathcal{W}}T^{*}_{\mathcal{W}}f=\sum_{i \in I}\omega_{i}^{2}\pi_{W_{i}}f,
\end{equation*}
is positive and self-adjoint. It is shown that for a fusion frame $\mathcal{W}$, the fusion frame operator $S_{\mathcal{W}}$ is invertible, and thus we have the following reconstruction formula \cite{frame of subspace}: 
\begin{equation*}
f=\sum_{i \in I}\omega_{i}^{2}S_{\mathcal{W}}^{-1}\pi_{W_{i}}f, \quad(f \in \mathcal {H}).
\end{equation*}
In \cite{frame of subspace}, it has been proved that $\mathcal{W}=\{(W_{i},\omega_{i})\}_{i \in I} $ is a Parseval fusion frame if and only if $S_{\mathcal{W}}=I_{\mathcal {H}}$. The family $\left\lbrace (S_{\mathcal{W}}^{-1}W_{i},\omega_{i})\right\rbrace _{i \in I}$, which is also a fusion frame, is called the \textit{canonical dual} of $\mathcal{W}$. Generally, a Bessel sequence  $\{(V_{i},\upsilon_{i})\}_{i \in I}$ is said to be an \textit{alternate (G\~{a}vru\c{t}a) dual} of $\mathcal{W}$, whenever
\begin{equation*}
 f=\sum_{i \in I}\omega_{i}\upsilon_{i}\pi_{V_{i}}S_{\mathcal{W}}^{-1}\pi_{W_{i}}f,\quad(f \in \mathcal {H}).
\end{equation*}
Let $\mathcal{W}=\{(W_{i},\omega_{i})\}_{i \in I} $ be a fusion frame for $\mathcal {H}$. A fusion Bessel sequence $\mathcal{V}=\{(V_{i},\upsilon_{i})\}_{i \in I}$ is a dual of $\mathcal{W}$ if and only if \cite{Osgooei}
\begin{equation*}
T_{\mathcal{V}}\varphi_{\mathcal{VW}} T_{\mathcal{W}}^{*}=I_{\mathcal {H}},
\end{equation*}
where the bounded operator $\varphi_{\mathcal{VW}}:\sum_{i \in I}\bigoplus W_{i} \rightarrow \sum_{i \in I}\bigoplus V_{i} $ is given by
\begin{equation*}
\varphi_{\mathcal{VW}}\big(\{f_{i}\}_{i \in I}\big)=\left\lbrace \pi_{V_{i}}S_{\mathcal{W}}^{-1}f_{i}\right\rbrace _{i \in I} .
\end{equation*}

For the basic facts about fusion frames we refer the reader to \cite{Arabyani dual,frame of subspace, 16, Excess 1,Nga,Rahimi}. The following result establishes the relationship between local and global properties.

\begin{theorem}\label{Local theorem}
\cite{frame of subspace}  Let $\{W_{i}\}_{i \in I}$ be a family of closed subspaces of $\mathcal {H}$, $ \omega _{i} >0 $ and $\{f_{i,j}\}_{j \in J_{i}}$ be a frame (Riesz basis) for $W_{i}$ with frame bounds $A_{i}$ and $B_{i}$ such that
\begin{equation*}
0 < A=\textnormal{inf}_{i \in I} A_{i}\leq \textnormal{sup}_{i \in I} B_{i}=B < \infty .
\end{equation*}
Then the following conditions are equivalent:
\begin{itemize}
\item[(i)] $\{(W_{i},\omega_{i})\}_{i \in I}$ is a fusion frame (fusion Riesz basis) for $\mathcal {H}$,
\item[(ii)] $\{\omega_{i}f_{i,j}\}_{i \in I, j \in J} $ is a frame (Riesz basis) for $\mathcal {H}$.
\end{itemize}
\end{theorem}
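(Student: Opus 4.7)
The plan is to derive both equivalences simultaneously by sandwiching the ordinary frame quantity $\sum_{i,j}|\langle f,\omega_i f_{i,j}\rangle|^2$ between two positive multiples of the fusion frame quantity $\sum_i \omega_i^2\|\pi_{W_i}f\|^2$, with proportionality constants determined by the uniform local bounds $A$ and $B$. The crucial observation, which lets the two sides communicate, is that $f_{i,j}\in W_i$ forces $\langle f,f_{i,j}\rangle=\langle \pi_{W_i}f,f_{i,j}\rangle$ for every $f\in\mathcal{H}$, so the local frame inequality applied to $\pi_{W_i}f$ can be rewritten directly as a statement about $\langle f, f_{i,j}\rangle$.

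Concretely, I would first fix $i\in I$ and apply the lower and upper local frame bounds to $\pi_{W_i}f\in W_i$, giving $A_i\|\pi_{W_i}f\|^2\le \sum_{j\in J_i} |\langle f,f_{i,j}\rangle|^2\le B_i\|\pi_{W_i}f\|^2$. Multiplying by $\omega_i^2$ (which absorbs into the inner product as $\omega_i f_{i,j}$), summing over $i$, and then using $A\le A_i$ and $B_i\le B$ uniformly, one obtains
\[
A\sum_{i\in I} \omega_i^2\|\pi_{W_i}f\|^2 \;\le\; \sum_{i,j} |\langle f,\omega_i f_{i,j}\rangle|^2 \;\le\; B\sum_{i\in I} \omega_i^2\|\pi_{W_i}f\|^2.
\]
The fusion frame equivalence (i)$\Leftrightarrow$(ii) falls out at once: each side is bounded above and below by positive multiples of $\|f\|^2$ precisely when the other is.

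For the fusion Riesz basis version I would replace the frame inequality with its Riesz variant. Each local Riesz basis $\{f_{i,j}\}_{j\in J_i}$ expands every $h_i\in W_i$ uniquely as $h_i=\sum_{j} c_{i,j}f_{i,j}$ with $A_i\sum_j|c_{i,j}|^2\le \|h_i\|^2\le B_i\sum_j|c_{i,j}|^2$. Testing a finite sum $\sum_{i,j}c_{i,j}\,\omega_i f_{i,j}=\sum_i \omega_i h_i$ and combining the uniformized local Riesz inequalities with the fusion Riesz inequalities would yield $\bigl\|\sum_{i,j} c_{i,j}\omega_i f_{i,j}\bigr\|^2$ comparable to $\sum_{i,j}|c_{i,j}|^2$; the converse direction is analogous, starting from a global Riesz expansion and restricting to a single $W_i$. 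Completeness of $\{f_{i,j}\}_{i,j}$ is equivalent to completeness of $\bigcup_i W_i$, since each local family spans $W_i$.

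The main obstacle is the bookkeeping in the Riesz basis direction: one must check that the uniform bounds $A>0$ and $B<\infty$ are strong enough to prevent degeneracy when aggregating local Riesz inequalities over a possibly infinite index $I$, and that uniqueness of global coefficients follows correctly from uniqueness within each $W_i$ together with the direct-sum structure implied by the fusion Riesz condition. The frame case, by contrast, is purely a Fubini-style rearrangement once the projection identity $\langle f,f_{i,j}\rangle=\langle \pi_{W_i}f,f_{i,j}\rangle$ is in hand.
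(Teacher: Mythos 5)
The paper states this result as a cited preliminary (Theorem 3.2 of Casazza--Kutyniok, \emph{Frames of subspaces}) and gives no proof of its own; your argument is correct and is essentially the standard proof from that reference. The sandwich $A\sum_i\omega_i^2\|\pi_{W_i}f\|^2\le\sum_{i,j}|\langle f,\omega_i f_{i,j}\rangle|^2\le B\sum_i\omega_i^2\|\pi_{W_i}f\|^2$ via $\langle f,f_{i,j}\rangle=\langle\pi_{W_i}f,f_{i,j}\rangle$ is exactly the right mechanism for the frame case, and the coefficient-side aggregation of the local Riesz inequalities (over finite families of pairs $(i,j)$, which keeps each partial sum $h_i$ inside $W_i$) together with the observation that completeness of $\{f_{i,j}\}$ is equivalent to completeness of $\{W_i\}$ settles the Riesz basis case.
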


Finally, we list two known results that will be utilized in the subsequent sections.
\begin{proposition}\cite{16} \label{Gavruta1} 
Let $U \in B(\mathcal{H})$ and $ W\subset \mathcal{H} $ be a closed subspace. Then the following are equivalent:
\begin{itemize}
\item[(i)] $\pi_{UW}U=U\pi_{W}.$
\item[(ii)]$U^{*}UW\subseteq W .$
\end{itemize}
\end{proposition}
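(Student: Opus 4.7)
The plan is to prove the equivalence by a direct orthogonal decomposition argument, without invoking any heavy machinery. The central observation is that both conditions can be expressed in terms of how $U$ moves $W$ and $W^{\perp}$ around, so the proof reduces to testing the defining identity in (i) against the splitting $\mathcal{H}=W\oplus W^{\perp}$ and using the adjoint relation $\langle Uv,Uw\rangle=\langle v,U^{*}Uw\rangle$.

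For the implication (i)$\Rightarrow$(ii), I would feed an arbitrary $v\in W^{\perp}$ into the identity $\pi_{UW}U=U\pi_{W}$. Since $\pi_{W}v=0$, the right-hand side vanishes, which forces $Uv\in(UW)^{\perp}$. Writing this orthogonality as $\langle Uv,Uw\rangle=0$ for all $w\in W$ and transferring $U$ to its adjoint yields $\langle v,U^{*}Uw\rangle=0$ for every $v\in W^{\perp}$. Because $W$ is closed, $(W^{\perp})^{\perp}=W$, so $U^{*}Uw\in W$ for each $w\in W$, which is precisely $U^{*}UW\subseteq W$.

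For (ii)$\Rightarrow$(i), I would fix $f\in\mathcal{H}$, decompose it as $f=\pi_{W}f+g$ with $g\in W^{\perp}$, and show that $U\pi_{W}f$ is exactly the $UW$-component of $Uf$. The first summand already lies in $UW$ by construction, so it suffices to verify $Ug\perp UW$. For any $w\in W$, hypothesis (ii) gives $U^{*}Uw\in W$, and since $g\in W^{\perp}$ the inner product $\langle Ug,Uw\rangle=\langle g,U^{*}Uw\rangle$ vanishes. Uniqueness of the orthogonal decomposition relative to $UW\oplus(UW)^{\perp}$ then yields $\pi_{UW}Uf=U\pi_{W}f$, which is the desired operator identity once $f$ is allowed to vary.

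The only subtlety to watch is the standing assumption that $\pi_{UW}$ is well defined, i.e., that $UW$ is treated as a closed subspace; otherwise one should read $\pi_{UW}$ as the projection onto $\overline{UW}$, but the argument above is unaffected because the orthogonality computations are made against arbitrary elements of $UW$ and then pass to the closure by continuity. Beyond that technical point, the proof is entirely routine and I do not anticipate a genuine obstacle.
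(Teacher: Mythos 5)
Your argument is correct and complete. Note that the paper does not prove this proposition at all---it is imported from G\~{a}vru\c{t}a's work \cite{16} as a known tool---so there is no in-paper proof to compare against; your direct orthogonal-decomposition argument (testing $\pi_{UW}U=U\pi_{W}$ on $W^{\perp}$ and transferring $U$ to $U^{*}U$ via the inner product, then reversing the decomposition for the converse) is the standard self-contained proof, and you correctly flag and dispose of the only real subtlety, namely that $\pi_{UW}$ must be read as the projection onto $\overline{UW}$, which is harmless since $(UW)^{\perp}=(\overline{UW})^{\perp}$ and $U\pi_{W}f\in UW\subseteq\overline{UW}$.
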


\begin{proposition}\cite{frame of subspace, mitra sh.} \label{mitra sh.} 
Let $\mathcal{W}=\{(W_{i},\omega_{i})\}_{i \in I} $ be a fusion frame for $\mathcal {H}$. Then the following are equivalent:
\begin{itemize}
\item[(i)] $\mathcal{W}$ is a fusion Riesz basis.
\item[(ii)] $S_{\mathcal{W}}^{-1}W_{i}\perp W_{j}$ for all $i , j \in I,~ i \neq j $.
\item[(iii)] $ \omega_{i}^{2}\pi_{W_{i}}S_{\mathcal{W}}^{-1}\pi_{W_{j}}=\delta_{i,j}\pi_{W_{j}}$ for all $i , j \in I $. 
\end{itemize}
\end{proposition}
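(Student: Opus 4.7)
The plan is to establish the cycle (i) $\Rightarrow$ (ii) $\Rightarrow$ (iii) $\Rightarrow$ (i), leaning heavily on Theorem~\ref{Local theorem} to translate between the fusion-level statements and their local (ordinary frame / Riesz basis) counterparts.

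For (i) $\Rightarrow$ (ii), I would pick, for each $i$, an orthonormal basis $\{f_{i,j}\}_{j\in J_i}$ of $W_i$. By Theorem~\ref{Local theorem}, the fact that $\mathcal W$ is a fusion Riesz basis promotes $\{\omega_i f_{i,j}\}_{i,j}$ to an ordinary Riesz basis of $\mathcal H$. The canonical dual is $\{\omega_i S_{\mathcal W}^{-1}f_{i,j}\}$, and the Riesz basis property gives the biorthogonality relations
\begin{equation*}
\omega_i\omega_k\,\langle S_{\mathcal W}^{-1}f_{i,j},f_{k,l}\rangle=\delta_{(i,j),(k,l)}.
\end{equation*}
For $i\neq k$ the right-hand side is $0$ for every $j,l$; since $\{f_{i,j}\}_j$ spans $W_i$ and $S_{\mathcal W}^{-1}$ is a bijection, $\{S_{\mathcal W}^{-1}f_{i,j}\}_j$ spans $S_{\mathcal W}^{-1}W_i$, so the vanishing of these inner products yields $S_{\mathcal W}^{-1}W_i\perp W_k$.

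For (ii) $\Rightarrow$ (iii), the orthogonality $S_{\mathcal W}^{-1}W_j\perp W_i$ (valid for $i\neq j$) is exactly the statement $\pi_{W_i}S_{\mathcal W}^{-1}\pi_{W_j}=0$, so the off-diagonal part of (iii) is immediate. For the diagonal, I would substitute the reconstruction formula $f=\sum_k\omega_k^{2}S_{\mathcal W}^{-1}\pi_{W_k}f$ and apply $\pi_{W_j}$; all $k\neq j$ terms drop out by the off-diagonal case, leaving $\pi_{W_j}f=\omega_j^{2}\pi_{W_j}S_{\mathcal W}^{-1}\pi_{W_j}f$ for every $f$, which is (iii) with $i=j$.

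For (iii) $\Rightarrow$ (i), I would again choose orthonormal bases $\{f_{i,j}\}_j$ for each $W_i$ and show that $\{\omega_i f_{i,j}\}$ is a Riesz basis. Using (iii), one computes
\begin{equation*}
\omega_i\omega_k\,\langle S_{\mathcal W}^{-1}f_{i,j},f_{k,l}\rangle=\omega_i\omega_k\,\langle \pi_{W_k}S_{\mathcal W}^{-1}\pi_{W_i}f_{i,j},f_{k,l}\rangle=\delta_{i,k}\,\delta_{j,l},
\end{equation*}
which is exactly the biorthogonality of the frame $\{\omega_i f_{i,j}\}$ with its canonical dual, forcing it to be a Riesz basis of $\mathcal H$. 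Invoking Theorem~\ref{Local theorem} in the reverse direction then returns the fusion Riesz basis property of $\mathcal W$. The main care required is in the last step, where one must be sure that local biorthogonality with the canonical dual actually characterizes Riesz bases (not merely frames); indexing and the use of the invertibility of $S_{\mathcal W}$ and $S_{\mathcal W}^{-1}$ to transport spanning properties between $W_i$ and $S_{\mathcal W}^{-1}W_i$ are the only subtle points, while the rest of the argument is a straightforward manipulation of projections and the reconstruction formula.
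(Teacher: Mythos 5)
Your argument is correct, but note that the paper itself offers no proof of this proposition --- it is imported by citation from the references, so there is nothing internal to compare against. Your route is a clean, self-contained one: you localize everything through Theorem~\ref{Local theorem} by choosing orthonormal bases $\{f_{i,j}\}_{j\in J_i}$ of each $W_i$ (so that the local frame operator of $\{\omega_i f_{i,j}\}$ is exactly $S_{\mathcal W}$, and $\sum_j\langle f,f_{i,j}\rangle f_{i,j}=\pi_{W_i}f$), and then reduce each equivalence to the classical fact that a frame is a Riesz basis if and only if it is biorthogonal to its canonical dual. All three steps check out: the biorthogonality relations in (i)$\Rightarrow$(ii) transport correctly to $S_{\mathcal W}^{-1}W_i\perp W_k$ because $S_{\mathcal W}^{-1}$ is bounded and bijective (so it carries a dense spanning set of $W_i$ to one of the closed subspace $S_{\mathcal W}^{-1}W_i$); the diagonal case of (iii) follows from the reconstruction formula exactly as you describe; and the computation $\omega_i\omega_k\langle S_{\mathcal W}^{-1}f_{i,j},f_{k,l}\rangle=\delta_{i,k}\delta_{j,l}$ in (iii)$\Rightarrow$(i) is right. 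The one ingredient you correctly flag --- that biorthogonality with the canonical dual characterizes Riesz bases among frames, not merely frames --- is a standard theorem of ordinary frame theory, so no gap remains. The cited sources argue more intrinsically at the level of the synthesis operator and the Riesz decomposition of $\mathcal H$ into the $S_{\mathcal W}^{-1}W_i$; your localization buys a shorter proof at the cost of invoking Theorem~\ref{Local theorem} and the classical Riesz-basis characterization as black boxes.
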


\section{Weight-Scalable Fusion Riesz Bases and 1-excess Fusion Frames}
In this section, we present a method for generating Parseval fusion frames that relies on preserving subspaces and merely scaling weights. First, we examine the results that are comparable to those observed for scalable frames. Then we provide a characterization for the scalability of fusion Riesz bases and discuss on the scalability of $1$-excess fusion frames. In the following, we state the general definition of the weight-scalability for fusion frames.

\begin{definition}\label{gama scalability fusion}
A fusion frame $\mathcal{W}=\{(W_{i},\omega_{i})\}_{i \in I} $ of $\mathcal {H}$ is called \textit{weight-scalable} if there exists a family of weights $\gamma:=\{\gamma_{i}\}_{i \in I}$ such that $\mathcal{W}_{\gamma}:=\{(W_{i},\omega_{i}\gamma_{i})\}_{i \in I}$ is a Parseval fusion frame for $\mathcal {H}$.
\end{definition}

We use $\gamma$-scalable whenever we intend to refer to the family of weights $\gamma=\{\gamma_{i}\}_{i \in I}$ that make a fusion frame weight-scalable. Assume that $\{W_{i}\}_{i \in I}$ is a family of closed subspaces of $\mathcal {H}$. Obviously, any finite fusion frame $\{W_{i}\}_{i=1}^{n} $ together with any sequence of weights $\{\omega_{i}\}_{i=1}^{n} $ is again a fusion frame. This may not be the case in the infinite-dimensional situation. However, if $\{\omega_{i}\}_{i \in I}$ is a semi-normalized sequence \cite{Balazs}, then $\mathcal{W}=\{(W_{i},\omega_{i})\}_{i \in I}$ is a fusion frame. Corresponding to a sequence of weights $\gamma:=\{\gamma_{i}\}_{i \in I},$ we define the diagonal operator $D_{\gamma}$ in $\sum_{i \in I}\bigoplus W_{i}$ as follows.
\begin{equation*}
D_{\gamma}\{f_{i}\}_{i \in I}=\{\gamma_{i}f_{i}\}_{i \in I},~\{f_{i}\}_{i \in I} \in \sum_{i \in I}\bigoplus W_{i},
\end{equation*}
where
\begin{equation*}
\textnormal{dom}~D_{\gamma}:=\left\lbrace \{f_i\}_{i\in I}\in \sum_{i \in I}\bigoplus W_{i}~:~ \{\gamma_i f_i\}_{i\in I}\in \sum_{i \in I}\bigoplus W_{i} \right\rbrace .
\end{equation*}
In \cite{Rahimi scalable}, it was shown that $D_{\gamma}$ is a self-adjoint operator in $\sum_{i \in I}\bigoplus W_{i}$. Moreover, an equivalent condition for the scalability of fusion frames was provided, see \cite[Proposition 2.9]{Rahimi scalable} for more details.

\begin{proposition}\label{main scalability}\cite{Rahimi scalable}
Let $\mathcal{W}=\{(W_{i},\omega_{i})\}_{i \in I} $ be a fusion frame for $\mathcal {H}$. Then the following are equivalent: 
\begin{itemize}
\item[(i)] $\mathcal{W}$ is scalable,
\item[(ii)] There exists a positive diagonal operator $D_{\gamma}$ in $\sum_{i \in I}\bigoplus W_{i}$ such that
\begin{equation}\label{yyy}
\overline{T_{\mathcal{W}}D_{\gamma}}D_{\gamma}T^{*}_{\mathcal{W}}=I_{\mathcal {H}}.
\end{equation}
\end{itemize}
\end{proposition}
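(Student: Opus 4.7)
The plan is to reduce the Parseval condition for $\mathcal{W}_\gamma$ to an operator identity by exploiting the factorization of synthesis operators $T_{\mathcal{W}_\gamma}=T_{\mathcal{W}}D_\gamma$. Indeed, for any $\{f_i\}\in\sum_{i\in I}\bigoplus W_i$ in the appropriate domain,
\begin{equation*}
T_{\mathcal{W}_\gamma}\{f_i\}=\sum_{i\in I}\omega_i\gamma_i f_i=T_{\mathcal{W}}D_\gamma\{f_i\},
\end{equation*}
and by taking adjoints one expects $T_{\mathcal{W}_\gamma}^{*}=D_\gamma T_{\mathcal{W}}^{*}$, with $\overline{T_{\mathcal{W}}D_\gamma}=(D_\gamma T_{\mathcal{W}}^{*})^{*}$. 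Once these identifications are in place, the Parseval condition $S_{\mathcal{W}_\gamma}=T_{\mathcal{W}_\gamma}T_{\mathcal{W}_\gamma}^{*}=I_{\mathcal{H}}$ is exactly \eqref{yyy}.

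For (i)$\Rightarrow$(ii): starting from a $\gamma$-scalable fusion frame, let $D_\gamma$ be the diagonal operator on $\sum_{i\in I}\bigoplus W_i$ with entries $\gamma_i>0$. First I would check that $T_{\mathcal{W}}^{*}f=\{\omega_i\pi_{W_i}f\}_{i\in I}$ lies in $\mathrm{dom}\,D_\gamma$ for every $f\in\mathcal{H}$, which follows from $\sum_{i\in I}(\omega_i\gamma_i)^2\|\pi_{W_i}f\|^2=\|f\|^2<\infty$ thanks to the Parseval property. Consequently $D_\gamma T_{\mathcal{W}}^{*}=T_{\mathcal{W}_\gamma}^{*}$ is everywhere defined and bounded, and taking adjoints gives $\overline{T_{\mathcal{W}}D_\gamma}=T_{\mathcal{W}_\gamma}$. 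Then
\begin{equation*}
\overline{T_{\mathcal{W}}D_\gamma}\,D_\gamma T_{\mathcal{W}}^{*}=T_{\mathcal{W}_\gamma}T_{\mathcal{W}_\gamma}^{*}=S_{\mathcal{W}_\gamma}=I_{\mathcal{H}}.
\end{equation*}

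For (ii)$\Rightarrow$(i): from a positive diagonal $D_\gamma$ satisfying \eqref{yyy}, read off the scalars $\gamma_i\geq 0$ and set $\mathcal{W}_\gamma:=\{(W_i,\omega_i\gamma_i)\}_{i\in I}$. The identity \eqref{yyy} forces $\mathrm{ran}\,T_{\mathcal{W}}^{*}\subseteq\mathrm{dom}\,D_\gamma$, so pairing \eqref{yyy} with an arbitrary $f\in\mathcal{H}$ yields
\begin{equation*}
\|f\|^{2}=\langle\overline{T_{\mathcal{W}}D_\gamma}D_\gamma T_{\mathcal{W}}^{*}f,f\rangle=\|D_\gamma T_{\mathcal{W}}^{*}f\|^{2}=\sum_{i\in I}(\omega_i\gamma_i)^{2}\|\pi_{W_i}f\|^{2},
\end{equation*}
which is the Parseval identity for $\mathcal{W}_\gamma$. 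In particular all $\gamma_i$ with $W_i\neq\{0\}$ can be taken strictly positive after discarding redundant zero entries, producing the desired family of weights.

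The principal technical obstacle is handling the possibly unbounded operator $D_\gamma$: since $\{\gamma_i\}$ need not be semi-normalized, $T_{\mathcal{W}}D_\gamma$ is only densely defined and one must justify the closure manipulation $\overline{T_{\mathcal{W}}D_\gamma}=(D_\gamma T_{\mathcal{W}}^{*})^{*}$. Here I would rely on the self-adjointness of $D_\gamma$ established in \cite{Rahimi scalable} and the standard fact that for a bounded operator $T$ and a densely defined self-adjoint $D$, one has $(TD)^{*}=D T^{*}$, so that $\overline{TD}=(DT^{*})^{*}$ whenever $DT^{*}$ is everywhere defined; boundedness of $D_\gamma T_{\mathcal{W}}^{*}$ (guaranteed on either side of the equivalence by the Parseval identity) then closes the argument.
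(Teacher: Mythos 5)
The paper does not prove this proposition; it is imported verbatim from \cite[Proposition 2.9]{Rahimi scalable}, so there is no in-paper argument to compare against. Your proof is the natural one and is correct: the key identifications $D_\gamma T_{\mathcal{W}}^{*}=T_{\mathcal{W}_\gamma}^{*}$ and $\overline{T_{\mathcal{W}}D_\gamma}=(D_\gamma T_{\mathcal{W}}^{*})^{*}=T_{\mathcal{W}_\gamma}$ are justified by the standard adjoint rule $(AB)^{*}=B^{*}A^{*}$ for $A$ bounded and $B$ densely defined (here $D_\gamma$ is densely defined since finitely supported sequences lie in its domain, and self-adjoint by \cite{Rahimi scalable}), and the computation $\langle\overline{T_{\mathcal{W}}D_\gamma}D_\gamma T_{\mathcal{W}}^{*}f,f\rangle=\Vert D_\gamma T_{\mathcal{W}}^{*}f\Vert^{2}=\sum_{i\in I}(\omega_i\gamma_i)^{2}\Vert\pi_{W_i}f\Vert^{2}$ turns \eqref{yyy} into exactly the Parseval identity for $\mathcal{W}_\gamma$ and back. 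The only loose end is the one you flag yourself in (ii)$\Rightarrow$(i): positivity of $D_\gamma$ as an operator only gives $\gamma_i\geq 0$, whereas Definition \ref{gama scalability fusion} requires genuine weights $\gamma_i>0$; ``discarding redundant zero entries'' changes the index set and hence the fusion frame, so strictly speaking one either reads ``positive'' as strictly positive diagonal entries (as the cited source intends) or accepts the weaker notion of scalability that permits dropping subspaces. This is a defect of the statement as quoted rather than of your argument.
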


It is evident that 1-equi-dimensional fusion frames correspond to ordinary frames, and thereby Proposition \ref{main scalability} will align with Proposition 2.4 of \cite{Gitta 13}. Furthermore, if $\liminf_{i \in I}\omega_{i}>0,$ then the operator $D_{\gamma}$ as in Proposition \ref{main scalability}, is bounded and \eqref{yyy} can be simplified as  $T_{\mathcal{W}}D^{2}_{\gamma}T^{*}_{\mathcal{W}}=I_{\mathcal {H}}.$ A straightforward consequence of this characterization is that the weight-scalability, like ordinary frames, is invariant under unitary transformations. What is more, if $U \in B(\mathcal {H})$ satisfies $U^*UW_{i}\subseteq W_{i}~(i \in I)$, then it follows from Proposition \ref{Gavruta1} that
\begin{equation*}
S_{U\mathcal{W}_{\gamma}}=\sum _{i \in I} (\omega_{i}\gamma_{i})^2 \pi_{UW_{i}}=US_{\mathcal{W}_{\gamma}}U^{-1}.
\end{equation*}
Therefore, $\mathcal{W}$ is weight-scalable if and only if $U\mathcal{W}$ is weight-scalable. 

\subsection{Fusion Riesz bases}

In the following, we provide a characterization for weight-scalable fusion Riesz bases.

\begin{proposition}\label{new def}
Let $\mathcal{W}=\{(W_{i},\omega_{i})\}_{i \in I} $ be a fusion Riesz basis for $\mathcal {H}$ and $\omega=\{\omega_{i}\}_{i \in I}$. Then the following are equivalent:
\begin{itemize}
\item[(i)] $\mathcal{W}$ is $\omega^{-1}$-scalable,
\item[(ii)] $S_{\mathcal{W}}^{-1}\pi_{W_{i}}=\omega_{i}^{-2}\pi_{W_{i}},$ for all $i \in I$,
\item[(iii)] $\mathcal{W}$ is orthogonal.
\end{itemize}
\end{proposition}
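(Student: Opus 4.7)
The plan is to treat the three implications as a short cycle, leveraging Proposition \ref{mitra sh.} as the workhorse and the preliminary fact that a 1-uniform Parseval fusion frame is exactly a fusion orthonormal basis.

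First, for the equivalence (i) $\Leftrightarrow$ (iii), I would unpack the definition: being $\omega^{-1}$-scalable with $\gamma_i = \omega_i^{-1}$ means $\{(W_i,\omega_i\gamma_i)\}_{i\in I} = \{(W_i,1)\}_{i\in I}$ is a Parseval fusion frame. This family is manifestly 1-uniform, and the preliminary section records that a family of closed subspaces forms a fusion orthonormal basis iff it is a 1-uniform Parseval fusion frame. So (i) is literally a restatement of (iii), and this direction costs nothing.

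For (iii) $\Rightarrow$ (ii), I would write $S_{\mathcal{W}} = \sum_{i\in I}\omega_i^2\pi_{W_i}$ and note that under orthogonality $\pi_{W_i}\pi_{W_j}=\delta_{ij}\pi_{W_i}$, so $S_{\mathcal{W}}$ acts as multiplication by $\omega_i^2$ on each $W_i$ and the operator $\sum_{i\in I}\omega_i^{-2}\pi_{W_i}$ is its inverse (where boundedness of the inverse follows from the fusion Riesz basis bounds, ensuring $\omega_i$ is bounded away from $0$ and $\infty$). Applying this to $\pi_{W_i}f$ gives $S_{\mathcal{W}}^{-1}\pi_{W_i}=\omega_i^{-2}\pi_{W_i}$.

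The main content sits in (ii) $\Rightarrow$ (iii), which is where I would use Proposition \ref{mitra sh.}(iii). Since $\mathcal{W}$ is a fusion Riesz basis, we have $\omega_j^2 \pi_{W_j}S_{\mathcal{W}}^{-1}\pi_{W_i}=\delta_{ij}\pi_{W_i}$; substituting the hypothesis (ii) into the left-hand side yields, for $i\neq j$, $\omega_j^2\omega_i^{-2}\pi_{W_j}\pi_{W_i}=0$, hence $\pi_{W_j}\pi_{W_i}=0$ and therefore $W_i\perp W_j$. Combined with the completeness built into the Riesz-basis property, the $\{W_i\}_{i\in I}$ are then a fusion orthonormal basis, which is the orthogonality asserted in (iii).

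I do not anticipate a real obstacle: the argument is essentially bookkeeping once Proposition \ref{mitra sh.} is on the table. The only subtle point to be careful about is boundedness of the formal inverse $\sum_i\omega_i^{-2}\pi_{W_i}$ in (iii) $\Rightarrow$ (ii), which I would justify by invoking the fusion frame bounds to control $\omega_i$ uniformly from below and above.
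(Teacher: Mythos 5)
Your proof is correct, but it is organized differently from the paper's. The paper runs the cycle (i)$\Rightarrow$(ii)$\Rightarrow$(iii)$\Rightarrow$(i) and writes out only (i)$\Rightarrow$(ii): it inserts $S_{\mathcal{W}_{\omega^{-1}}}=I_{\mathcal{H}}$ to get $S_{\mathcal{W}}^{-1}\pi_{W_i}=\sum_{j}\pi_{W_j}S_{\mathcal{W}}^{-1}\pi_{W_i}$ and then kills the off-diagonal terms with Proposition \ref{mitra sh.}, declaring the remaining two implications straightforward. You instead dispose of (i)$\Leftrightarrow$(iii) via the preliminary characterization of fusion orthonormal bases as 1-uniform Parseval fusion frames, put the substance into (ii)$\Rightarrow$(iii) — again via Proposition \ref{mitra sh.}(iii), used in the reverse direction from the paper — and prove (iii)$\Rightarrow$(ii) by diagonalizing $S_{\mathcal{W}}$ directly. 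Both arguments hinge on the same lemma; yours has the merit of making explicit the two implications the paper waves through (including the boundedness of $\sum_i\omega_i^{-2}\pi_{W_i}$, correctly extracted from the frame bounds under orthogonality), while the paper's is a single shorter computation. One small caveat: (i) is not quite "literally a restatement" of (iii) — the 1-uniform Parseval characterization yields a fusion orthonormal basis, i.e.\ mutual orthogonality \emph{plus} completeness, so in the direction (iii)$\Rightarrow$(i) you must invoke the standing fusion Riesz basis hypothesis to supply completeness. You do have it available, but the step deserves a sentence rather than "costs nothing."
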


\begin{proof}
$(i)\Rightarrow (ii)$ If $\mathcal{W}$ is $\omega^{-1}$-scalable, then $\mathcal{W}_{\omega^{-1}}$ is a Parseval fusion frame. Hence,
\begin{align*}
S_{\mathcal{W}}^{-1}\pi_{W_{i}}&=S_{\mathcal{W}_{\omega^{-1}}}S_{\mathcal{W}}^{-1}\pi_{W_{i}}
\\&=\sum_{j \in I}\pi_{W_{j}}S_{\mathcal{W}}^{-1}\pi_{W_{i}}
\\&=\pi_{W_{i}}S_{\mathcal{W}}^{-1}\pi_{W_{i}}=\omega_{i}^{-2}\pi_{W_{i}},
\end{align*}
for all $i \in I$, where the last line is obtained by Proposition \ref{mitra sh.}. The implications $(ii)\Rightarrow (iii)$ and $(iii)\Rightarrow (i)$ are straightforward.
\end{proof}

The following example demonstrates a weight-scalable fusion Riesz basis.

\begin{example}
Consider $W_{1}=\textnormal{span} \{u\}$ and $W_{2}=\{0\}\times \mathcal {H}_{2},$ where $u=(u_{1},u_{2},u_{3}) \in \mathcal {H}_{3}$ is a unit vector such that $u_{1}\neq 0.$ Then $\mathcal{W}=\{(W_{i},\omega_{i})\}_{i=1}^2$ is a fusion Riesz basis for $\mathcal {H}_{3}$ with the fusion frame operator
\begin{equation*}
S_{\mathcal{W}}=
\begin{pmatrix}
\omega_{1}^2u^2_{1}   &    \omega_{1}^2 u_{1}u_{2}     &    \omega_{1}^2 u_{1}u_{3} \\
\omega_{1}^2u_{1}u_{2}    &   \omega_{1}^2u_{2}^2+\omega_{2}^2  &    \omega_{1}^2 u_{2}u_{3} \\
\omega_{1}^2u_{1}u_{3}     &      \omega_{1}^2 u_{2}u_{3}      &     \omega_{1}^2u_{3}^2+\omega_{2}^2 \\
\end{pmatrix}.
\end{equation*}
Obviously, $\mathcal{W}$ is $\omega^{-1}$-scalable if and only if $u_{2}=u_{3}=0,$ which confirms Proposition \ref{new def}.
\end{example}

%%%%%%%%%%%%%%%%%%%%%%%%%%%%%%%%%%%%%%%%%%%%%%%%%%%%%%%%%%%
A direct computation analogous to the above example confirms that, without losing the generality, we may consider 1-uniform fusion frames. However, it should be noted that our results are valid for general fusion frames.

\subsection{1-excess fusion frames}

In what follows, we study the weight-scalability of 1-excess fusion frames. First, we state the concept of excess for fusion frames \cite{Excess 1}. Let $\mathcal{W}=\{(W_{i},\omega_{i})\}_{i \in I} $ be a fusion frame for $\mathcal {H}$ with the synthesis operator $T_{\mathcal{W}}$. The \textit{excess} of $\mathcal{W}$ is defined as 
\begin{equation*}
e(\mathcal{W})=\textnormal{dim}N(T_{\mathcal{W}}).
\end{equation*}

In \cite{excess of fusion}, the authors present a novel approach for the excess of fusion frames, which is related to the excess of their local frames.
\begin{proposition}\cite{excess of fusion}
Let $\mathcal{W}=\{(W_{i},\omega_{i})\}_{i \in I} $ be a fusion frame for $\mathcal {H}$ with a local frame $\mathcal{F}=\{\omega_{i}f_{i,j}\}_{i \in I, j \in J_{i}},$ where $\{f_{i,j}\}_{j \in J_{i}}$ is a Riesz basis for $W_{i},$ for all $i \in I$. Then 
\begin{equation*}
e(\mathcal{W})=e(\mathcal{F}).
\end{equation*}
\end{proposition}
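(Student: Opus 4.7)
The plan is to exhibit a bounded linear bijection with bounded inverse
$$\Phi:\ell^{2}\bigl(\{(i,j):i\in I,\, j\in J_{i}\}\bigr)\to \sum_{i\in I}\bigoplus W_{i}$$
that intertwines the two synthesis operators, i.e.\ $T_{\mathcal{F}}=T_{\mathcal{W}}\circ \Phi$. Once such a $\Phi$ is in hand, it restricts to a linear isomorphism between the closed subspaces $N(T_{\mathcal{F}})$ and $N(T_{\mathcal{W}})$; since every bounded bijection between Hilbert spaces preserves the Hilbert dimension of closed subspaces, the desired equality $e(\mathcal{W})=e(\mathcal{F})$ then follows immediately.

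To construct $\Phi$, fix a scalar sequence $\{c_{i,j}\}\in \ell^{2}$ and set
$$f_{i}:=\sum_{j\in J_{i}}c_{i,j}f_{i,j}\in W_{i},$$
using that $\{f_{i,j}\}_{j\in J_{i}}$ is a Riesz basis for $W_{i}$. Define $\Phi(\{c_{i,j}\}):=\{f_{i}\}_{i\in I}$. The Riesz basis property with bounds $A_{i},B_{i}$ gives
$$A_{i}\sum_{j\in J_{i}}|c_{i,j}|^{2}\leq \|f_{i}\|^{2}\leq B_{i}\sum_{j\in J_{i}}|c_{i,j}|^{2},$$
and the uniformity hypothesis of Theorem~\ref{Local theorem} yields $0<A=\inf_{i}A_{i}\leq \sup_{i}B_{i}=B<\infty$. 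Summing over $i$, I obtain
$$A\sum_{i,j}|c_{i,j}|^{2}\leq \sum_{i}\|f_{i}\|^{2}\leq B\sum_{i,j}|c_{i,j}|^{2},$$
so $\Phi$ is a well-defined bounded operator, and the inverse map — which assigns to each $\{f_{i}\}\in \sum_{i\in I}\bigoplus W_{i}$ the (unique) Riesz coefficients of $f_{i}$ — is bounded by the same two-sided estimate.

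Injectivity of $\Phi$ follows from uniqueness of Riesz expansions and surjectivity from the fact that every $f_{i}\in W_{i}$ admits such an expansion. The intertwining identity is then a direct computation:
$$T_{\mathcal{F}}(\{c_{i,j}\})=\sum_{i}\omega_{i}\sum_{j}c_{i,j}f_{i,j}=\sum_{i}\omega_{i}f_{i}=T_{\mathcal{W}}\bigl(\Phi(\{c_{i,j}\})\bigr).$$
Consequently $N(T_{\mathcal{F}})=\Phi^{-1}(N(T_{\mathcal{W}}))$, and the two null spaces share the same dimension.

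The only delicate point — and the main obstacle — is securing that $\Phi$ is a genuine topological isomorphism, which is exactly where the uniform Riesz-basis bounds $0<\inf_{i}A_{i}\leq \sup_{i}B_{i}<\infty$ supplied by Theorem~\ref{Local theorem} are indispensable. Without this uniformity, the coefficient-to-vector map could fail to be bounded, or its inverse could fail to be bounded, and the clean identification of the two null spaces would collapse. Once the uniform bounds are used, the rest of the argument is purely bookkeeping.
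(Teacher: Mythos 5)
Your argument is correct, and it is the natural proof of this fact: the paper itself only cites the result from the reference on the excess of fusion frames without reproducing a proof, so there is nothing to contrast it with here, but the intertwining identity $T_{\mathcal{F}}=T_{\mathcal{W}}\circ\Phi$ together with the observation that a topological isomorphism carries $N(T_{\mathcal{F}})$ onto $N(T_{\mathcal{W}})$ is exactly the mechanism one expects. The one point worth flagging is that the uniform Riesz bounds $0<\inf_{i}A_{i}\leq\sup_{i}B_{i}<\infty$ are not literally part of the proposition's hypotheses as stated; you import them from Theorem~\ref{Local theorem}, and you are right that they are indispensable, since without them $\Phi$ or its inverse can fail to be bounded and the identification of the two null spaces breaks down (for finite $I$ the issue is vacuous). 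It would strengthen your write-up to say explicitly that the phrase ``local frame'' presupposes the setting of Theorem~\ref{Local theorem}, so that these uniform bounds are available; with that understanding, your proof is complete, and the remaining steps (unconditional convergence justifying the interchange $\sum_{i,j}c_{i,j}\omega_{i}f_{i,j}=\sum_{i}\omega_{i}\bigl(\sum_{j}c_{i,j}f_{i,j}\bigr)$, and preservation of Hilbert dimension under a bounded bijection of closed subspaces) are handled correctly.
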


Suppose that $\mathcal{W}=\{W_{i}\}_{i \in I} $ is a fusion frame for $\mathcal {H}$. As stated before, if all $W_{i}$'s are one dimensional, then $\mathcal {W}$ corresponds to an ordinary frame. Accordingly, the notion of scalability for such frames and fusion frames becomes equivalent. For instance, 1-equi-dimensional fusion frames associated with 1-excess tight frames, such as the Mercedes-Benz frame, can easily be made weight-scalable. In this respect, it is crucial to investigate the weight-scalability of fusion frames in which the subspaces are not necessarily one dimensional. Taking this into account, we discuss the weight-scalability of 1-excess fusion frames and provide some equivalent conditions for the weight-scalability of such fusion frames.

\begin{theorem}\label{main theorem gamma scalable}
Let $\mathcal{V}$ be a 1-excess fusion frame for $\mathcal {H}$ with the Riesz part $\mathcal{W}.$ Then the following conditions are equivalent:
\begin{itemize}
\item[(i)] $\mathcal{V}$ is $\gamma$-scalable, $\gamma=\{\gamma_{i}\}_{i=0}^{\infty}$.
\item[(ii)] Every excess element $x$ belongs to a one dimensional subspace and 
\begin{equation}\label{www}
\gamma_{0}^{2}\pi_{W_{i}}S_{\mathcal{W}_{\gamma}}^{-1}\pi_{V_{0}}\pi_{W_{j}} =\left( \gamma_{i}^{-2}\delta_{i,j}-\pi_{W_{i}}\right)\pi_{W_{j}}, \quad (i , j \geq 1),
\end{equation}
where $V_{0}$ is the subspace generated by $x.$
\item[(iii)] Every excess element $x$ belongs to a one dimensional subspace and 
\begin{equation}\label{simplify}
\gamma_{0}^{2}\left\langle \pi_{W_{j}}f,x \right\rangle x_{i}=\left( \delta_{i,j}-\gamma_{i}^{2}\pi_{W_{i}}\right)\pi_{W_{j}}f,\quad (i , j \geq 1,~f\in \mathcal {H}).
\end{equation}
\end{itemize}
\end{theorem}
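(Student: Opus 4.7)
The plan is to pivot on the decomposition $S_{\mathcal{V}_\gamma} = \gamma_0^2 \pi_{V_0} + S_{\mathcal{W}_\gamma}$, so that $\gamma$-scalability of $\mathcal{V}$ is equivalent to $\gamma_0^2 \pi_{V_0} + S_{\mathcal{W}_\gamma} = I_{\mathcal{H}}$. The one-dimensionality of $V_0 = \mathrm{span}\{x\}$ would come first as a structural observation: grafting a $k$-dimensional subspace onto a fusion Riesz basis enlarges the synthesis kernel by exactly $k$, so $e(\mathcal{V}) = 1$ forces $\dim V_0 = 1$. A second preparatory step notes that the rescaled family $\mathcal{W}_\gamma$ remains a fusion Riesz basis, so Proposition~\ref{mitra sh.} supplies the biorthogonality identity $\pi_{W_i} S_{\mathcal{W}_\gamma}^{-1} \pi_{W_j} = \gamma_i^{-2} \delta_{ij} \pi_{W_j}$ for $i,j \geq 1$, adopting the 1-uniform normalization motivated earlier in the section.

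For $(i) \Leftrightarrow (ii)$, I would start from $\gamma_0^2 \pi_{V_0} = I_\mathcal{H} - S_{\mathcal{W}_\gamma}$, multiply by $S_{\mathcal{W}_\gamma}^{-1}$ on the left, then sandwich by $\pi_{W_i}$ and $\pi_{W_j}$ and invoke the biorthogonality to arrive at \eqref{www}. For the converse, I would read \eqref{www} as $\pi_{W_i}\bigl(\gamma_0^2 S_{\mathcal{W}_\gamma}^{-1}\pi_{V_0} - S_{\mathcal{W}_\gamma}^{-1} + I_\mathcal{H}\bigr)\pi_{W_j} = 0$ for every pair $i,j \geq 1$; this forces the range of the bracketed bounded operator restricted to each $W_j$ to lie in $\bigcap_i W_i^\perp = \{0\}$, and since $\bigcup_j W_j$ is total in $\mathcal{H}$ (a fusion Riesz basis is in particular a fusion frame), the bracketed operator must be zero, recovering $S_{\mathcal{V}_\gamma} = I_\mathcal{H}$.

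For $(ii) \Leftrightarrow (iii)$, the move is to unpack $\pi_{V_0} f = \langle f, x\rangle x$ with $x$ normalized, and to compute $\pi_{W_i} S_{\mathcal{W}_\gamma}^{-1} x = \gamma_i^{-2} x_i$, where $x = \sum_k x_k$ is the unique Riesz decomposition of $x$ through $\mathcal{W}$ with $x_k \in W_k$. This identity is obtained by substituting the decomposition, rewriting $x_k = \pi_{W_k} x_k$, and applying the biorthogonality term-by-term. Plugging both facts into \eqref{www} and multiplying through by $\gamma_i^2$ should produce \eqref{simplify}, and reading the same chain backwards gives the other direction.

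The main obstacle I foresee is the distinction between the orthogonal projection $\pi_{W_i}x$ and the Riesz-decomposition component $x_i \in W_i$, which coincide only when the subspaces are mutually orthogonal; the bridging formula $\pi_{W_i} S_{\mathcal{W}_\gamma}^{-1} x = \gamma_i^{-2} x_i$ must therefore be extracted carefully through Proposition~\ref{mitra sh.} and the uniqueness of the Riesz decomposition. A secondary care point is the totality argument behind the converse of $(i) \Leftrightarrow (ii)$: the pointwise identity $\pi_{W_i}(\cdot)\pi_{W_j} = 0$ only yields global vanishing because the $W_j$ span $\mathcal{H}$, a fact that relies on $\mathcal{W}$ being a fusion frame.
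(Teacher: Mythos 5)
Your algebraic core is sound and matches the paper where it overlaps: the pivot on $S_{\mathcal{V}_\gamma}=\gamma_0^2\pi_{V_0}+S_{\mathcal{W}_\gamma}$, the sandwiching by $\pi_{W_i}$ and $\pi_{W_j}$ together with the biorthogonality $\gamma_i^{2}\pi_{W_i}S_{\mathcal{W}_\gamma}^{-1}\pi_{W_j}=\delta_{ij}\pi_{W_j}$ from Proposition~\ref{mitra sh.}, and the bridging identity $\gamma_i^{2}\pi_{W_i}S_{\mathcal{W}_\gamma}^{-1}x=x_i$ obtained term-by-term from the Riesz decomposition are all exactly what the paper does for $(ii)\Leftrightarrow(iii)$ and for the forward half of $(i)\Rightarrow(ii)$. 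Your totality argument for recovering $S_{\mathcal{V}_\gamma}=I_{\mathcal{H}}$ from \eqref{www} is a legitimate variant of the paper's computation $S_{\mathcal{V}_\gamma}S_{\mathcal{W}}=S_{\mathcal{W}}$, and is fine provided you note that $S_{\mathcal{W}_\gamma}^{-1}$ is bounded.

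However, there is a genuine gap in your first ``structural observation.'' The claim that $e(\mathcal{V})=1$ forces the excess element to span its own one-dimensional subspace is false, and this is precisely where the real content of $(i)\Rightarrow(ii)$ lies. A $1$-excess fusion frame need not have the form $V_0\cup\mathcal{W}$ with $V_0$ a separate subspace: the redundant direction can instead be absorbed into one of the Riesz-basis subspaces, i.e. $\mathcal{V}=\{W_1\oplus\mathrm{span}\{x\}\}\cup\{W_i\}_{i\geq 2}$ with $x\perp W_1$ and $x=\sum_{i\geq 2}x_i$, $x_i\in W_i$; the excess is still $1$ but no member of the family equals $\mathrm{span}\{x\}$. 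The paper's Example~\ref{1-excess example} ($W_1=\overline{\mathrm{span}}_{i\geq 0}\{e_i\}$, $W_2=\overline{\mathrm{span}}_{i\leq 0}\{e_i\}$) is exactly such a $1$-excess fusion frame whose excess element belongs to no one-dimensional subspace, so one-dimensionality cannot follow from the excess count alone. It must be derived from the scalability hypothesis: the paper assumes the absorbed configuration, plugs $g\in W_1$ into the Parseval identity to force $\gamma_1=1$ and $W_1\perp W_i$, rewrites the identity as $S_{\mathcal{W}_\gamma}f=f-\langle f,x\rangle x$, and then sets $f=x$ to get $S_{\mathcal{W}_\gamma}x=0$, contradicting the invertibility of $S_{\mathcal{W}_\gamma}$. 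Without an argument of this kind your proof of $(i)\Rightarrow(ii)$ does not go through.
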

 
\begin{proof}
$(i)\Rightarrow(ii)$ Towards a contradiction, assume that the excess element $x$ does not belong to a one dimensional subspace. Thus, without losing the generality, $\mathcal{V}$ can be expressed as 
\begin{equation*}
\mathcal{V}= V_{1} \cup \{W_{i}\}_{i =2}^{\infty},
\end{equation*}
where $\mathcal{W}=\{W_{i}\}_{i =1}^{\infty}$ is a fusion Riesz basis, $V_{1}=W_{1}\oplus\textnormal{span}\{x\}$ and the redundant element $x =\sum_{i=2}^{\infty} x_{i}$ is a unit vector of $\mathcal {H}$ such that $x\perp W_{1}$ and $x_{i} \in W_{i}$ for each $i \geq 2$. Since $\mathcal{V}$ is $\gamma$-scalable, then there exists a sequence of weights $\gamma=\{\gamma_{i}\}_{i=1}^\infty$ such that  
\begin{equation}\label{e3e}
f=S_{\mathcal{V_{\gamma}}}f=\gamma_{1}^{2}\pi_{W_{1}}f+\gamma_{1}^{2}\left\langle f,x \right\rangle x +\sum_{i =2}^{\infty}\gamma_{i}^{2}\pi_{W_{i}}f, \quad ( f\in \mathcal {H}).
\end{equation}
Put a non zero vector $ g\in W_{1}$ in \eqref{e3e}. It gives $g=\gamma_{1}^{2}g+\sum_{i =2}^{\infty}\gamma_{i}^{2}\pi_{W_{i}}g.$
It follows from the Riesz decomposition property of $\mathcal{W}$ that $\gamma_{1}=1$ and $W_{1}\perp W_{i}$ for all $i \geq 2.$ 
Moreover, by reformulating \eqref{e3e}, we get
\begin{equation*}
S_{\mathcal{W}_{\gamma}}f=f-\gamma_{1}^{2}\left\langle f,x \right\rangle x=f-\left\langle f,x \right\rangle x,
\end{equation*}
for each $f \in \mathcal {H}$. Substituting $x$ for $f$ gives $S_{\mathcal{W}_{\gamma}}x=0.$ Thus, $x=0.$ That is a contradiction and so $x$ must belong to a one dimensional subspace. As such, $\mathcal{V}$ must be rewritten as 
\begin{equation*}
\mathcal{V}= V_{0} \cup \mathcal{W},
\end{equation*}
where $V_{0}=\textnormal{span}\{x\}$ and $x =\sum_{i=1}^{\infty} x_{i}$ is a unit vector of $\mathcal {H}$. In light of $\mathcal{V}$ is $\gamma$-scalable, we get $\gamma_{0}^{2}\pi_{V_{0}}+S_{\mathcal{W}_{\gamma}}= I_{\mathcal {H}},$ and hence
\begin{equation*}\label{e4e}
S_{\mathcal{W}_{\gamma}}^{-1}=\gamma_{0}^{2}S_{\mathcal{W}_{\gamma}}^{-1}\pi_{V_{0}}+ I_{\mathcal {H}}.
\end{equation*}
Thus, for each $i , j \geq 1$, we obtain
\begin{align*}
\gamma_{0}^{2}\pi_{W_{i}}S_{\mathcal{W}_{\gamma}}^{-1}\pi_{V_{0}}\pi_{W_{j}}&=\gamma_{0}^{2}\pi_{W_{i}}S_{\mathcal{W}_{\gamma}}^{-1}\pi_{V_{0}}\pi_{W_{j}} +\pi_{W_{i}}\pi_{W_{j}}-\pi_{W_{i}}\pi_{W_{j}}
\\&=\pi_{W_{i}}\left( \gamma_{0}^{2}S_{\mathcal{W}_{\gamma}}^{-1}\pi_{V_{0}}+ I_{\mathcal {H}} \right)  \pi_{W_{j}}-\pi_{W_{i}}\pi_{W_{j}}
\\&=\pi_{W_{i}}S_{\mathcal{W}_{\gamma}}^{-1}\pi_{W_{j}}-\pi_{W_{i}}\pi_{W_{j}}
\\&=\left( \gamma_{i}^{-2}\delta_{i,j}-\pi_{W_{i}}\right) \pi_{W_{j}},
\end{align*}
where the last equality follows from Proposition \ref{mitra sh.}. So, (ii) holds.

$(ii)\Rightarrow(i)$ Assume that $\mathcal{V}= V_{0} \cup \mathcal{W},$ where $V_{0}=\textnormal{span}\{x\}.$ In addition, suppose that there exists a sequence of weights $\gamma=\{\gamma_{i}\}_{i=0}^{\infty}$ such that \eqref{www} is satisfied. Then, we induce
\begin{align*}
S_{\mathcal{V}_{\gamma}}S_{\mathcal{W}}&=\left( \gamma_{0}^{2}\pi_{V_{0}}+S_{\mathcal{W}_{\gamma}}\right)  S_{\mathcal{W}}
\\&=\sum_{j=1}^{\infty}\left(\gamma_{0}^{2}S_{\mathcal{W}_{\gamma}}S_{\mathcal{W}_{\gamma}}^{-1}\pi_{V_{0}}\pi_{W_{j}}+S_{\mathcal{W}_{\gamma}}\pi_{W_{j}} \right) 
\\&=\sum_{i,j=1}^{\infty}\left((\gamma_{0}\gamma_{i})^{2}\pi_{W_{i}}S_{\mathcal{W}_{\gamma}}^{-1}\pi_{V_{0}}\pi_{W_{j}}+\gamma_{i}^{2}\pi_{W_{i}}\pi_{W_{j}} \right) 
\\&=\sum_{i,j=1}^{\infty}\delta_{i,j}\pi_{W_{j}}=\sum_{i=1}^{\infty}\pi_{W_{i}}=S_{\mathcal{W}},
\end{align*}
where the last line is derived by applying \eqref{www}. Therefore, $S_{\mathcal{V}_{\gamma}}=I_{\mathcal {H}},$ i.e. $\mathcal{V}$ is $\gamma$-scalable.

$(ii)\Leftrightarrow(iii)$ The equation \eqref{www} may be simplified as follows:
\begin{align*}
\left( \delta_{i,j}-\gamma_{i}^{2}\pi_{W_{i}}\right)\pi_{W_{j}}f&=(\gamma_{i}\gamma_{0})^{2}\pi_{W_{i}}S_{\mathcal{W}_{\gamma}}^{-1}\pi_{V_{0}}\pi_{W_{j}} 
\\&=(\gamma_{i}\gamma_{0})^{2}\left\langle \pi_{W_{j}}f,x \right\rangle \pi_{W_{i}}S_{\mathcal{W}_{\gamma}}^{-1}x 
\\&=\gamma_{0}^{2}\left\langle \pi_{W_{j}}f,x \right\rangle x_{i},
\end{align*}
where the last equality follows from Proposition \ref{mitra sh.}. This completes the proof.
\end{proof}

This result can be generalized to a certain type of $k$-excess fusion frames by a similar approach for any $k\geq 2,$ see Proposition \ref{1-excess general}. In the following, we examine the validity of Theorem \ref{main theorem gamma scalable} through an example.

\begin{example}\label{1-excess example}
Let $\{e_{i}\}_{i \in \Bbb Z}$ be an orthonormal basis for $\mathcal {H}$, $W_{1}=\overline{\textnormal{span}}_{i\geq 0}\{e_{i}\}$ and $W_{2}=\overline{\textnormal{span}}_{i\leq 0}\{e_{i}\}$. It is easily observed that $\mathcal{V}=\{W_{i}\}_{i=1}^2$ is an exact fusion frame which is not Riesz basis, see \cite{frame of subspace}. In addition, it is proved in \cite{excess of fusion} that $\mathcal{V}$ is a 1-excess fusion frame for $\mathcal {H}.$ According to Theorem \ref{main theorem gamma scalable}, $\mathcal{V}$ is not weight-scalable, as the excess element does not belong to a one dimensional subspace. However, we investigate this matter directly. Suppose $\mathcal{V}$ is $\gamma$-scalable, then there exists a sequence of weights $\gamma=\{\gamma_{i}\}_{i \in \Bbb Z}$ such that   
\begin{align*}
f=S_{\mathcal{V}_{\gamma}}f&=\sum_{i\geq 0}\gamma_{1}^{2}\left\langle f,e_{i} \right\rangle e_{i}+ \sum_{i\leq 0}\gamma_{2}^{2}\left\langle f,e_{i} \right\rangle e_{i}
\\&=\sum_{i>0}\gamma_{1}^{2}\left\langle f,e_{i} \right\rangle e_{i}+ \sum_{i<0}\gamma_{2}^{2}\left\langle f,e_{i} \right\rangle e_{i}+(\gamma_{1}^{2}+\gamma_{2}^{2})\left\langle f,e_{0} \right\rangle e_{0},
\end{align*}
for all $f \in \mathcal {H}$. Substituting $e_{-1}, e_{0}, e_{1}$ for $f$ results in $\gamma_{i}=1~(i=1,2)$ and $\gamma_{1}^{2}+\gamma_{2}^{2}=1,$ a contradiction. 
\end{example}

In the sequel, in view of Theorem \ref{main theorem gamma scalable}, we proceed to investigate the weight-scalability of 1-excess fusion frames of the form
\begin{equation}\label{every 1 excess}
\mathcal{V}= V_{0} \cup \mathcal{W},
\end{equation}
where $\mathcal{W}=\{W_{i}\}_{i=1}^{\infty}$ is a fusion Riesz basis, $V_{0}=\textnormal{span}\{x\}$ and $x =\sum_{i \in \sigma} x_{i}$ is a unit vector of $\mathcal {H}$ such that $x_{i} \in W_{i}$ and $\sigma=\{i \geq 1 \mid x_{i}\neq0 \}$. The next result provides the essential criteria for the weight-scalability of $\mathcal{V}$. 

\begin{corollary}\label{erer}
Let $\mathcal{V}$ be the 1-excess fusion frame given by \eqref{every 1 excess}. If $\mathcal{V}$ is weight-scalable, then the following statements hold: 
\begin{itemize}
\item[(i)] $\gamma_{0}< 1$ and $x$ is an eigenvector of $S_{\mathcal{W}_\gamma}$ associated with eigenvalue $1-\gamma_{0}^2$.
\item[(ii)] $\left\langle x_{j},x \right\rangle =\dfrac{1-\gamma_{j}^{2}}{\gamma_{0}^{2}},~\pi_{W_{i}}x_{j}=\dfrac{\gamma_{j}^{2}-1}{\gamma_{i}^{2}}x_{i}$ for $ j \in \sigma,~i\neq j.$ 
\item[(iii)] If $ j \in \sigma^{c},$ then $\gamma_{j}=1,~x \perp W_{j}$ and $W_{j}\perp W_{i}$ for $i\neq j.$ 
\item[(iv)] If $ j\in \sigma,$ then $\gamma_{j}\neq 1,~x \not\perp W_{j}$ and $\textnormal{dim}W_{j}=1$.
\end{itemize}
\end{corollary}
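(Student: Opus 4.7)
The plan is to derive everything from the single identity $S_{\mathcal{W}_{\gamma}} = I_{\mathcal {H}} - \gamma_{0}^{2} \pi_{V_{0}}$, which is immediate from $S_{\mathcal{V}_{\gamma}} = I_{\mathcal {H}}$ by splitting off the $V_{0}$ summand. Equivalently, for every $f \in \mathcal {H}$,
\[
S_{\mathcal{W}_{\gamma}} f \;=\; f - \gamma_{0}^{2} \left\langle f, x \right\rangle x.
\]
Paired with Theorem \ref{main theorem gamma scalable}(iii), this single relation delivers all four claims once tested against well-chosen vectors.

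For (i), I plug $f = x$ and use $\|x\| = 1$ to read off $S_{\mathcal{W}_{\gamma}} x = (1 - \gamma_{0}^{2}) x$, which already identifies the eigenvalue. To get $\gamma_{0} < 1$, I compare $\langle S_{\mathcal{W}_{\gamma}} x, x \rangle = 1 - \gamma_{0}^{2}$ with the expression $\langle S_{\mathcal{W}_{\gamma}} x, x \rangle = \sum_{i\geq 1} \gamma_{i}^{2} \|\pi_{W_{i}} x\|^{2}$: the latter is $\geq 0$, and strictly positive since $\mathcal{W}$ is complete and $x \neq 0$. For (ii), I apply Theorem \ref{main theorem gamma scalable}(iii) with $f = x_{j}$ for $j \in \sigma$, noting that $\pi_{W_{j}} x_{j} = x_{j} \neq 0$: taking $i = j$ collapses the identity to a scalar equation yielding $\left\langle x_{j}, x \right\rangle = (1 - \gamma_{j}^{2})/\gamma_{0}^{2}$, and reinserting this value for $i \neq j$ gives the stated formula for $\pi_{W_{i}} x_{j}$.

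For (iv), fix $j \in \sigma$ and apply Theorem \ref{main theorem gamma scalable}(iii) with $i = j$: for every $g \in W_{j}$ this reads $\gamma_{0}^{2} \left\langle g, x \right\rangle x_{j} = (1 - \gamma_{j}^{2}) g$. The main obstacle is excluding the possibility $\gamma_{j} = 1$, and I would argue by contradiction. If $\gamma_{j} = 1$, the displayed equation forces $x \perp W_{j}$; plugging $\gamma_{j} = 1$ into the $S_{\mathcal{W}_{\gamma}}$-identity for $g \in W_{j}$ then yields $\sum_{k \neq j} \gamma_{k}^{2} \pi_{W_{k}} g = 0$, and pairing with $g$ turns this into a sum of non-negatives equal to zero, forcing $W_{j} \perp W_{k}$ for every $k \neq j$. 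But then $\left\langle x_{j}, x \right\rangle = \|x_{j}\|^{2} > 0$ contradicts $x \perp W_{j}$. With $\gamma_{j} \neq 1$ secured, the displayed equation immediately gives $W_{j} \subseteq \textnormal{span}\{x_{j}\}$, hence $\textnormal{dim}\, W_{j} = 1$; and (ii) yields $\left\langle x_{j}, x \right\rangle \neq 0$, so $x \not\perp W_{j}$.

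For (iii), suppose $j \in \sigma^{c}$, so $x_{j} = 0$. Theorem \ref{main theorem gamma scalable}(iii) with $i = j$ then leaves $(1 - \gamma_{j}^{2}) \pi_{W_{j}} f = 0$ for every $f$, which forces $\gamma_{j} = 1$. Substituting $\gamma_{j} = 1$ back into the $S_{\mathcal{W}_{\gamma}}$-identity for $f \in W_{j}$ gives $\sum_{k \neq j} \gamma_{k}^{2} \pi_{W_{k}} f = -\gamma_{0}^{2} \left\langle f, x \right\rangle x$; pairing with $f$ produces a non-negative quantity equal to a non-positive one, so both sides must vanish. This simultaneously delivers $x \perp W_{j}$ and $W_{j} \perp W_{k}$ for every $k \neq j$.
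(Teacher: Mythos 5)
Your proposal is correct and follows essentially the same route as the paper: both arguments rest on the identity $S_{\mathcal{W}_{\gamma}}=I_{\mathcal {H}}-\gamma_{0}^{2}\pi_{V_{0}}$ together with Theorem \ref{main theorem gamma scalable}(iii) tested against $x$, $x_{j}$, and vectors of $W_{j}$. The only cosmetic differences are that you obtain the orthogonality relations $W_{j}\perp W_{k}$ by pairing the frame-operator identity with $g\in W_{j}$ and invoking positivity of the projections, whereas the paper reads them off \eqref{simplify} with the roles of $i$ and $j$ interchanged, and your contradiction in (iv) is organized around excluding $\gamma_{j}=1$ rather than excluding $x\perp W_{j}$; the underlying computations coincide.
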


\begin{proof}
Due to $\mathcal{V}$ is weight-scalable, there exists a sequence of weights $\gamma=\{\gamma_{i}\}_{i=0}^\infty$ such that 
\begin{equation}\label{yty}
f=\gamma_{0}^{2}\left\langle f,x \right\rangle x +S_{\mathcal{W}_{\gamma}}f, \quad  (f\in \mathcal {H}).
\end{equation}

(i) Putting $f=x$  in \eqref{yty}, we get $S_{\mathcal{W}_{\gamma}}x=(1-\gamma_{0}^{2})x.$ If $\gamma_{0}=1,$ it yields $x=0,$ a contradiction. Moreover, the positivity of $S_{\mathcal{W}_{\gamma}}$ assures that  $\gamma_{0}<1.$ Thus (i) holds.

(ii) Replacing $f=x_{j}~(j \in \sigma)$ in \eqref{simplify} of Theorem \ref{main theorem gamma scalable}, we get $\left( \delta_{i,j}-\gamma_{i}^{2}\pi_{W_{i}}\right)x_{j}=\gamma_{0}^{2}\left\langle x_{j},x \right\rangle x_{i},$ which leads to
\begin{align*}
\begin{cases}
\left( 1-\gamma_{0}^{2}\left\langle x_{j},x \right\rangle -\gamma_{j}^{2}\right) x_{j}=0, \quad (j=i),
\\
\gamma_{0}^{2}\left\langle x_{j},x \right\rangle x_{i}+\gamma_{i}^{2}\pi_{W_{i}}x_{j}=0, \quad (j\neq i).
\end{cases}
\end{align*}
The first equation yields $\left\langle x_{j},x \right\rangle =\dfrac{1-\gamma_{j}^{2}}{\gamma_{0}^{2}}.$ Moreover, the second equation implies that $\pi_{W_{i}}x_{j}=\dfrac{\gamma_{j}^{2}-1}{\gamma_{i}^{2}}x_{i}$ for all $i\neq j.$

(iii) Setting $0 \neq g \in W_{j}$ in \eqref{simplify}, we have
\begin{equation}\label{3.333}
\left( \delta_{i,j}-\gamma_{i}^{2}\pi_{W_{i}}\right)g=\gamma_{0}^{2}\left\langle g,x\right\rangle x_{i},\quad (i\geq 1).
\end{equation} 
Since $x_{j}=0,$ then by assigning $i=j$ in \eqref{3.333} yields $\gamma_{j}=1.$ Moreover, after renaming the indices $i \leftrightarrow j$ and taking $0 \neq f \in W_{i}$ in \eqref{simplify}, we obtain
\begin{equation*}
\left( \delta_{j,i}-\pi_{W_{j}}\right)f=\gamma_{0}^{2}\left\langle f,x \right\rangle x_{j}=0, \quad (i\neq j).
\end{equation*}
It derives that $\pi_{W_{j}}f=0,$ i.e., $W_{j}\perp W_{i}$ for $i\neq j.$ Hence, $x \perp W_{j}$ because
$\left\langle x,g \right\rangle =\sum_{i \in \sigma}\left\langle x_{i},g \right\rangle =0$ for each $0 \neq g \in W_{j}.$

(iv) Let $x_{j}\neq0.$ By contrary, assume that $x\perp W_{j},$ then 
\begin{equation}\label{lll}
\Vert x_{j} \Vert ^2+\sum_{i\in \sigma , i\neq j}\left\langle x_{i},x_{j} \right\rangle =\left\langle x,x_{j} \right\rangle =0. 
\end{equation}
If $i\neq j$ in \eqref{3.333}, we get $W_{j}\perp W_{i}.$ Thus, it yields from \eqref{lll} that $\Vert x_{j} \Vert =0$ and thereby $x_{j}=0,$ a contradiction. So, $x \not\perp W_{j}.$ That means that there exists a non zero vector $g \in W_{j}$ such that $\left\langle g,x \right\rangle \neq 0.$ Putting $f=g$ in \eqref{3.333} and considering $i=j,$ we get 
\begin{equation*}
(1-\gamma_{j}^{2})g=\gamma_{0}^{2}\left\langle g,x\right\rangle x_{j}.
\end{equation*}
Hence, it follows that $\gamma_{j}\neq 1$ and $g \in\textnormal{span}\{x_{j}\}.$ We claim that $W_{j}=\textnormal{span}\{x_{j}\}.$ In fact, if there exists $0 \neq  h \in W_{j}$ such that $\left\langle h,x \right\rangle=0,$ then from (iii) we obtain $\gamma_{j}=1$, that is a contradiction. Therefore, $\textnormal{dim}W_{j}=1,$ the desired result.
\end{proof}

This result prompts us to characterize all weight-scalable 1-excess fusion frames.

\begin{theorem} 
Every weight-scalable 1-excess fusion frame $\mathcal{V}$ for $\mathcal{H}$ is precisely of the form
\begin{equation*}
\mathcal{V}=\left\lbrace \textnormal{span}\left\lbrace f_{i} \right\rbrace \right\rbrace _{i \in I_{1}}\cup \left\lbrace W_{i} \right\rbrace _{i \in I_{2}},
\end{equation*}
where $\mathcal{F}:={\left\lbrace f_{i} \right\rbrace _{i \in I_{1}}}$ is a strictly scalable 1-excess frame sequence of unit vectors and $\left\lbrace W_{i} \right\rbrace _{i \in I_{2}}$ is an orthogonal fusion Riesz basis for $\left( \textnormal{span}\left\lbrace f_{i} \right\rbrace _{i \in I_{1}}\right) ^{\perp}$.
\end{theorem}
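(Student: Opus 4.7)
The plan is to unpack Corollary \ref{erer} and check that, up to relabeling, it already gives the claimed decomposition; only the frame-sequence and Riesz-basis properties of the two pieces remain to be verified. Given a weight-scalable 1-excess fusion frame $\mathcal{V}$, Theorem \ref{main theorem gamma scalable} first forces $\mathcal{V}=V_0\cup\{W_i\}_{i\geq 1}$ with $V_0=\textnormal{span}\{x\}$ and $x=\sum_{i\in\sigma}x_i$. Set $\sigma=\{i\geq 1:x_i\neq 0\}$, $I_1=\{0\}\cup\sigma$, and $I_2=\sigma^c$. Part (iv) of Corollary \ref{erer} yields $\textnormal{dim}\,W_j=1$ for $j\in\sigma$, so one may write $W_j=\textnormal{span}\{f_j\}$ with unit vector $f_j=x_j/\|x_j\|$, and one sets $f_0:=x$. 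This already exhibits $\mathcal{V}$ in the form claimed, and the rest of the work is structural.

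For the block indexed by $I_2$, part (iii) of Corollary \ref{erer} says every $W_i$ with $i\in I_2$ is orthogonal to every other $W_j$, and also to $V_0$ since $x\perp W_j$ for $j\in\sigma^c$. Consequently $\overline{\textnormal{span}}\{W_i\}_{i\in I_2}\perp H_1$, where $H_1:=\overline{\textnormal{span}}\{f_i\}_{i\in I_1}$. Because $\{W_i\}_{i\geq 1}$ is a Riesz decomposition of $\mathcal{H}$, the union of their spans is dense, so $\mathcal{H}=H_1\oplus\overline{\textnormal{span}}\{W_i\}_{i\in I_2}$ as an orthogonal sum; in particular $H_1^\perp=\overline{\textnormal{span}}\{W_i\}_{i\in I_2}$, and $\{W_i\}_{i\in I_2}$ is pairwise orthogonal and spans $H_1^\perp$, so it is a fusion orthonormal basis, i.e., an orthogonal fusion Riesz basis for $H_1^\perp$.

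For the block indexed by $I_1$, I would invoke uniqueness of the Riesz decomposition of $\{W_i\}_{i\geq 1}$: for any $h\in H_1$, the components of $h$ in $W_i$ with $i\in I_2$ must vanish by the orthogonality just established, so $\{W_j\}_{j\in\sigma}$ is itself a Riesz decomposition of $H_1$ and hence $\{f_j\}_{j\in\sigma}$ is a Riesz basis of $H_1$. Adjoining $f_0=x\in H_1$ produces a frame sequence for $H_1$ of excess one. Evaluating the Parseval identity $S_{\mathcal{V}_\gamma}=I_{\mathcal{H}}$ on $f\in H_1$ collapses to
$$f=\gamma_0^2\langle f,f_0\rangle f_0+\sum_{j\in\sigma}\gamma_j^2\langle f,f_j\rangle f_j,$$
so $\{\gamma_0 f_0\}\cup\{\gamma_j f_j\}_{j\in\sigma}$ is a Parseval frame for $H_1$ with strictly positive scalars (recall $0<\gamma_0<1$ from Corollary \ref{erer}(i)), which exhibits $\{f_i\}_{i\in I_1}$ as strictly scalable. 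The converse is a direct reversal: given the stated form, pick $\gamma$ equal to the strictly positive scalars that Parseval-scale $\{f_i\}_{i\in I_1}$ on the indices in $I_1$, and choose $\gamma_i$ so that $\omega_i\gamma_i=1$ on $I_2$; a short computation on each orthogonal summand then yields $S_{\mathcal{V}_\gamma}=I_{\mathcal{H}}$. The most delicate step is the uniqueness argument identifying $\{W_j\}_{j\in\sigma}$ as a Riesz decomposition of $H_1$; all other assertions follow directly from Corollary \ref{erer}.
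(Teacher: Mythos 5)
Your proposal is correct and follows essentially the same route as the paper: both directions rest on Corollary \ref{erer}(iii)--(iv) to split $\mathcal{H}$ into $H_1=\overline{\textnormal{span}}\{x,x_i\}_{i\in\sigma}$ and its orthogonal complement spanned by $\{W_i\}_{i\in\sigma^c}$, and then evaluate the Parseval identity on each summand. The extra detail you supply (restricting the Riesz decomposition to $\sigma$ and checking density) only makes explicit what the paper's proof asserts directly.
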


\begin{proof}
Assume that $\mathcal{F}$ is a strictly scalable frame for some $\mathcal{H}_{1}\subset \mathcal{H}$, then there exists a sequence of weights $\{\gamma_{i}\} _{i \in I_{1}}$ such that 
\begin{equation}\label{yes}
\sum_{i \in I_{1}}\gamma_{i}^{2}\left\langle f,f_{i} \right\rangle f_{i}=f, \quad (f \in \mathcal{H}_{1}).
\end{equation}
Moreover, $\left\lbrace W_{i} \right\rbrace _{i \in I_{2}}$ is a uniform scalable fusion Riesz basis for $\mathcal{H}_{1}^{\perp},$ i.e., $\sum_{i \in I_{2}}\pi_{W_{i}}=I_{\mathcal{H}_{1}^{\perp}}.$ We put $\gamma_{i}=1$ for all $i \in I_{2}.$ Thus, by considering $\gamma=\{\gamma_{i}\}_{i \in I_{1}\cup I_{2}}$ and in view of the fact that for every $g \in \mathcal{H},$ there exist unique vectors $g_{1} \in \mathcal{H}_{1}$ and $g_{2} \in \mathcal{H}_{1}^{\perp}$ such that $g=g_{1}+g_{2}$ we obtain 
\begin{align*}
S_{\mathcal{V}_{\gamma}}g&=\sum_{i \in I_{1}}\gamma_{i}^{2}\pi_{\textnormal{span}\{f_{i}\}}(g_{1}+g_{2})+\sum_{i \in I_{2}}\pi_{W_{i}}(g_{1}+g_{2})
\\&=\sum_{i \in I_{1}}\gamma_{i}^{2}\left\langle g_{1},f_{i} \right\rangle f_{i}+\sum_{i \in I_{2}}\pi_{W_{i}}g_{2}
\\&=g_{1}+g_{2}=g,
\end{align*}
where the last line is obtained by using \eqref{yes}.
Conversely, if the 1-excess fusion frame $\mathcal{V}$ given by \eqref{every 1 excess}  is weight-scalable, then there exists a sequence of weights $\gamma=\{\gamma_{i}\}_{i \in I}$ such that $S_{\mathcal{V}_{\gamma}}=I_{\mathcal{H}}.$ The conditions $(iii)$ and $(iv)$ of Corollary \ref{erer} ensure that 
$\mathcal{F}:=\left\lbrace x,x_{i} \right\rbrace _{i \in \sigma}$ constitutes a 1-excess ordinary frame for $\mathcal{H}_{1}:=\overline{\textnormal{span}}\left\lbrace x,x_{i} \right\rbrace _{i \in \sigma}$ and $\left\lbrace W_{i} \right\rbrace _{i \in \sigma^{c}}$ is an orthogonal fusion Riesz basis for $\mathcal{H}_{1}^{\perp}=\oplus_{i \in \sigma^{c}}W_{i}$. Hence, it follows from the weight-scalability of $\mathcal{V}$ and Corollary \ref{erer} that $\gamma_{i}=1$ for $i \in \sigma^{c}.$ So, for every $f \in \mathcal{H}_{1}$ we get
\begin{align*}
f&=S_{\mathcal{V}_{\gamma}}f
\\&=\gamma_{0}^{2}\pi_{\textnormal{span} \{x\}}f+\sum_{i \in \sigma}\gamma_{i}^{2}\pi_{\textnormal{span}\{x_{i}\}}f+\sum_{i \in \sigma^{c}}\pi_{W_{i}}f
\\&=\gamma_{0}^{2}\left\langle f,x \right\rangle x+\sum_{i \in \sigma}\gamma_{i}^{2}\frac{\left\langle f,x_{i} \right\rangle}{\Vert x_{i} \Vert ^{2}} x_{i}=S_{\gamma \mathcal{F}}f.
\end{align*}
Therefore, $\mathcal{F}$ is strictly scalable.
\end{proof}

Now, we present an equivalent condition for the weight-scalability of 1-excess fusion frames containing orthogonal fusion Riesz bases.

\begin{corollary}\label{OfR}
Let $\mathcal{V}$ be the 1-excess fusion frame as defined in \eqref{every 1 excess} and $\mathcal{W}$ an orthogonal fusion Riesz basis for $\mathcal {H}$. Then the following are equivalent:
\begin{itemize}
\item[(i)] $\mathcal{V}$ is weight-scalable.
\item[(ii)] There exists a subspace of $\mathcal{W}$ generated by the excess element.
\end{itemize}
\end{corollary}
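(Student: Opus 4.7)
My plan is to handle the two directions separately, leveraging Corollary \ref{erer} heavily for the harder direction and just doing a direct calculation for the easier one.

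For the direction (i) $\Rightarrow$ (ii), I would start from the decomposition $x = \sum_{i \in \sigma} x_i$ with $x_i \in W_i$ coming from \eqref{every 1 excess}. The goal is to prove that $\sigma$ is a singleton $\{i_0\}$, because then Corollary \ref{erer}(iv) immediately gives $W_{i_0} = \textnormal{span}\{x_{i_0}\} = \textnormal{span}\{x\}$, which is exactly the conclusion. To force $|\sigma|=1$, I would fix any $j \in \sigma$ and apply Corollary \ref{erer}(ii), which yields
\[
\pi_{W_i} x_j \;=\; \dfrac{\gamma_j^{2}-1}{\gamma_i^{2}}\, x_i, \qquad (i \neq j).
\]
Since $\mathcal{W}$ is orthogonal, $W_i \perp W_j$ gives $\pi_{W_i} x_j = 0$. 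Corollary \ref{erer}(iv) tells us $\gamma_j \neq 1$ for $j \in \sigma$, so we may cancel $\gamma_j^{2}-1 \neq 0$ and conclude $x_i = 0$ for every $i \neq j$. This forces $\sigma = \{j\}$, as required.

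For the direction (ii) $\Rightarrow$ (i), I would simply produce an explicit scaling. Suppose $W_{i_0} = \textnormal{span}\{x\}$ with $\|x\|=1$. Define weights
\[
\gamma_i = 1 \quad (i \geq 1,\ i \neq i_0), \qquad \gamma_0^2 + \gamma_{i_0}^2 = 1,
\]
for instance $\gamma_0 = \gamma_{i_0} = 1/\sqrt{2}$. Because $\mathcal{W}$ is an orthogonal fusion Riesz basis we have $\sum_{i \geq 1}\pi_{W_i} = I_{\mathcal{H}}$, and the one-dimensionality $W_{i_0} = \textnormal{span}\{x\}$ gives $\pi_{W_{i_0}} f = \langle f, x\rangle x$. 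Therefore
\[
S_{\mathcal{V}_\gamma} f = \gamma_0^2 \langle f,x\rangle x + \gamma_{i_0}^2 \langle f,x\rangle x + \sum_{i \neq i_0} \pi_{W_i} f = (\gamma_0^2+\gamma_{i_0}^2)\pi_{W_{i_0}}f + \sum_{i \neq i_0}\pi_{W_i}f = f,
\]
proving weight-scalability.

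The only real obstacle is the first direction, and it is essentially defused once one notices that part (ii) of Corollary \ref{erer} collides with the orthogonality of $\mathcal{W}$: the orthogonality makes the left-hand side of the relation in Corollary \ref{erer}(ii) vanish, while part (iv) forbids $\gamma_j=1$ on $\sigma$, so the right-hand side can vanish only by collapsing $\sigma$ to a single index. The reverse direction is a one-line computation using the orthogonal decomposition of the identity on $\mathcal{H}$.
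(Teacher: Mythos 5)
Your proof is correct and follows essentially the same route as the paper: the forward direction combines Corollary \ref{erer}(ii) and (iv) with the orthogonality of $\mathcal{W}$ to force $x_i=0$ for all but one index (the paper does the identical computation by substituting $f=x_j$ into \eqref{simplify} and using $\langle x_j,x\rangle\neq 0$), and the reverse direction is the same explicit scaling $\gamma_0^2+\gamma_{i_0}^2=1$, $\gamma_i=1$ otherwise. No gaps.
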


\begin{proof}
$(i)\Rightarrow(ii)$ One can easily see that there exists a $j\geq 1$ such that the excess element $x \not\perp W_{j}.$ Since $\mathcal{V}$ is weight-scalable, it gives $W_{j}=\textnormal{span}\{x_{j}\}$ by Corollary \ref{erer}(iv). Putting $f=x_{j}$ in \eqref{simplify} yields
\begin{equation*}
\gamma_{0}^{2} \left\langle x_{j},x \right\rangle x_{i}=\left( \delta_{i,j}-\gamma_{i}^{2}\pi_{W_{i}}\right)x_{j}=0, \quad (i \neq j).
\end{equation*}
It can be seen that $ \left\langle x_{j},x \right\rangle$ is a non zero scalar because of the orthogonality of $\mathcal{W}.$ Therefore, $x_{i}=0$ for all $i \neq j,$ which implies that $W_{j}=V_{0}.$

$(ii)\Rightarrow(i)$ Assume that there exists a $j\geq 1$ such that $W_{j}=V_{0}.$ Taking $\gamma_{0},\gamma_{j}$ satisfying $ \gamma_{0}^{2}+\gamma_{j}^{2}=1$ and $\gamma_{i}=1$ for $i\neq j$, we conclude that
\begin{align*}
S_{\mathcal{V}_{\gamma}}&=\left( \gamma_{0}^{2}+\gamma_{j}^{2}\right) \pi_{W_{j}}+\sum_{i =1,i\neq j}^{\infty}\gamma_{i}^{2}\pi_{W_{i}}
\\&=\pi_{W_{j}}+\sum_{i =1,i\neq j}^{\infty}\pi_{W_{i}}=\pi_{\oplus_{i =1}^{\infty}W_{i}}= I_{\mathcal{H}}.
\end{align*} 
\end{proof}

Motivated by the obtained results, we seek a more precise understanding of weight-scalable 1-excess fusion frames. To this end, consider the 1-excess fusion frame $\mathcal{V}$ introduced in \eqref{every 1 excess}. 
It is apparent that if $\textnormal{dim}W_{i}>1$ for all $i \geq1,$ then $\mathcal{V}$ is not weight-scalable. Indeed, the redundant element belongs to one of $W_{i}$'s, which is in conflict with the assertion made in Theorem \ref{main theorem gamma scalable}. Therefore, there is at least a one dimensional subspace in weight-scalable 1-excess fusion frames.

\begin{Remark}
(i) The orthogonality condition of $\mathcal{W}$ can be removed from the assumptions of Corollary \ref{OfR}, provided that $\textnormal{dim}\mathcal {H}=3$ and $\mathcal{W}$ is not a 1-equi-dimensional fusion Riesz basis. In this case, the weight-scalability of $\mathcal{V}$ implies the orthogonality of $\mathcal{W}$. Indeed, without losing the generality, such 1-excess fusion frames for $\mathcal {H}_{3}$ are of the form $\mathcal{V}= V_{0} \cup \mathcal{W},$
where
\begin{equation*}
V_{0}=\textnormal{span}\{v\},~W_{1}=\textnormal{span}\{u\},~W_{2}=\{0\}\times \mathcal {H}_{2},
\end{equation*}
such that $v=(v_{1},v_{2},v_{3}),~u=(u_{1},u_{2},u_{3})$ and $u_{1}\neq 0.$ Let $\mathcal{V}$ be weight-scalable and by contrary assume that $W_{1} \not\perp W_{2}.$ If $V_{0}=W_{1},$ then $V_{0} \not\perp W_{2}.$ Hence, $\textnormal{dim}W_{2}=1,$ by Corollary \ref{erer}(iv), a contradiction. Thus, suppose that $v$ and $u$ are linearly independent. Now, if $\left\lbrace  v, u, e_{2} \right\rbrace $ or $\left\lbrace  v, u, e_{3} \right\rbrace $ constitutes a basis for $\mathcal {H}_3,$ then $e_{3}$ or $e_{2}$ may be regarded as the excess element. Thus, Theorem \ref{main theorem gamma scalable} guarantees that it must belong to a one dimensional subspace, which is a contradiction. Otherwise, if $\left\lbrace  v, u, e_{2} \right\rbrace $ and $\left\lbrace  v, u, e_{3} \right\rbrace $ are not bases, it easily follows that $e_{2},e_{3} \in \textnormal{span}\{v,u\},$ as $v$ and $u$ are linearly independent. Hence, $\mathcal{W}$ is not a Riesz basis. That is again a contradiction. Therefore, $\mathcal{W}$ is orthogonal and $V_{0}=W_{1},$ by Corollary \ref{OfR}.

(ii) The assertion presented in (i) is not universally applicable to the cases where $\textnormal{dim}\mathcal {H}\neq 3$ or $\mathcal {W}$ is 1-equi-dimensional. To illustrate, one can refer to 1-equi-dimensional fusion frames associated with the Mercedes-Benz frames. As another example, let $\{e_{i}\}_{i=1}^{n}$ be the canonical orthonormal basis for $\mathcal {H}_{n}~(n\geq 4).$ Consider
\begin{align*}
V_{0}&=\textnormal{span}\left\lbrace \alpha e_{1}-\sqrt{1-\alpha ^{2}}e_{2}\right\rbrace ,
\\
W_{1}&=\textnormal{span}\{e_{1}\},~W_{2}= \textnormal{span}\{e_{1}+e_{2}\},~W_{3}= \textnormal{span}\{e_{i}\}_{i=3}^{n},
\end{align*}
where $ 0<\alpha <\frac{\sqrt{2}}{2}$ and $\{W_{i}\}_{i=1}^{3}$ is a non-orthogonal fusion Riesz basis. Then $\mathcal{V}= V_{0} \cup \{W_{i}\}_{i =1}^{3}$ is a 1-excess fusion frame for $\mathcal {H}_{n}.$ A straightforward computation shows that $\mathcal{V}$ is $\gamma$-scalable by
\begin{align*}
\begin{cases}
\gamma_{0}=\sqrt{\dfrac{1}{1-\alpha ^{2}+\alpha \sqrt{1-\alpha ^{2}}}},
~\gamma_{1}=\sqrt{\dfrac{1-2\alpha ^{2}}{1-\alpha ^{2}+\alpha \sqrt{1-\alpha ^{2}}}},
\\
\gamma_{2}=\sqrt{\dfrac{2\alpha \sqrt{1-\alpha ^{2}}}{1-\alpha ^{2}+\alpha \sqrt{1-\alpha ^{2}}}},~\gamma_{3}=1.
\\
\end{cases}
\end{align*}
\end{Remark}

\begin{example} 
Let $\{e_{i}\}_{i=1}^4$ be the canonical orthonormal basis for $\mathcal {H}_4.$ Take
\begin{align*}
V_{0}&=\textnormal{span}\{\alpha_{1}e_{1}+\alpha_{2}e_{2}+\alpha_{3}e_{3}+\alpha_{4}e_{4}\},
\\
W_{1}&=\textnormal{span}\{e_{1},e_{2}\},~W_{2}= \textnormal{span}\{e_{3}+\beta e_{1}\},~W_{3}= \textnormal{span}\{e_{4}\},
\end{align*}
where $\alpha_{i},\beta \in \Bbb C$ for $1\leq i \leq 4$ and $\mathcal{W}=\{W_{i}\}_{i=1}^{3}$ is a fusion Riesz basis. Then
$\mathcal{V}= V_{0} \cup \{W_{i}\}_{i =1}^{3}$ is a 1-excess fusion frame for $\mathcal {H}_4.$ If $\beta=0,$ then $\mathcal{W}$ is orthogonal, and it can be easily seen that $\mathcal{V}$ is weight-scalable if and only if $V_{0}=W_{2}$ or  $V_{0}=W_{3},$ according to Corollary \ref{OfR}. Otherwise, let $\beta\neq 0.$ If either $\alpha_{1}$ or $\alpha_{2}$ is non zero, then $\mathcal{V}$ is not weight-scalable, by Corollary \ref{erer}(iv). Also, in the case $\alpha_{1}=\alpha_{2}=0,$ Corollary \ref{erer}(iii) confirms that $\mathcal{V}$ is not weight-scalable.
\end{example}

%%%%%%%%%%%%%%%%%%%%%%%%%%%%%%%%%%%%%%%%%%%%%%%%%%%%%%%%%%%%%%%

\section{Weight-Scalable 2-excess Fusion Frames}
 
This section is devoted to studying the weight-scalability of 2-excess fusion frames. In general, four distinct situations there exist for the excess elements of a 2-excess fusion frame. However, this investigation focuses on the situation where the excess elements are located in more than one subspace of the Riesz part, as this case allows for the derivation of outcomes that are comparable to those of the other three cases. It should be noted that a 2-excess fusion frame is not weight-scalable if the redundant elements lie in a subspace of its Riesz part, see Corollary \ref{k-excess in one subspace} for the general case. Hence, without losing the generality, every 2-excess fusion frame $\mathcal{V}$ in which the redundant vectors are located in different subspaces of its Riesz part can be expressed as
\begin{equation}\label{every 2 excess}
\mathcal{V}=\left\lbrace V_{1},V_{2},W_{i}\right\rbrace _{i =3}^{\infty},
\end{equation}
where $V_{\ell}= W_{\ell}\oplus\textnormal{span}\{x^{(\ell)}\}$ for $\ell=1,2,$ and $x^{(1)} =\sum_{i=2}^{\infty} x^{(1)}_{i}$ and $x^{(2)} =\sum_{i=1,i\neq2}^{\infty} x^{(2)}_{i}$ are unit vectors of $\mathcal{H}$ such that $x^{(\ell)}_{i}$'s are vectors of $W_{i}$ and $\mathcal{W}=\{W_{i}\}_{i=1}^\infty$ is a fusion Riesz basis for $\mathcal{H}.$ At first, we present some equivalent conditions for the weight-scalability of $\mathcal{V}.$

\begin{proposition}\label{main 2-excess f.f.}
Let $\mathcal{V}$ be the $2$-excess fusion frame defined by \eqref{every 2 excess} and $\mathcal{W}$ a fusion Riesz basis for $\mathcal {H}.$ Then the following conditions are equivalent:
\begin{itemize}
\item[(i)] $\mathcal{V}$ is $\gamma$-scalable, $\gamma=\{\gamma_{i}\}_{i=1}^{\infty}$.
\item[(ii)] $\sum_{\ell =1}^{2}\gamma_{\ell}^{2}\pi_{W_{i}}S_{\mathcal{W}_{\gamma}}^{-1}\pi_{\textnormal{span}\{x^{(\ell)}\}}\pi_{W_{j}} =\left( \gamma_{i}^{-2}\delta_{i,j}-\pi_{W_{i}}\right)\pi_{W_{j}},$ for all $i , j.$
\item[(iii)] $\sum_{\ell =1}^{2}\gamma_{\ell}^{2}\left\langle \pi_{W_{j}}f,x^{(\ell)} \right\rangle x^{(\ell)}_{i} =\left( \delta_{i,j}-\gamma_{i}^{2}\pi_{W_{i}}\right)\pi_{W_{j}}f,$
for all $f\in \mathcal {H}.$
\end{itemize}
\end{proposition}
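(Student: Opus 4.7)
The plan is to mirror the proof structure of Theorem \ref{main theorem gamma scalable}, exploiting the fact that $x^{(1)}\perp W_1$ and $x^{(2)}\perp W_2$ (by the indexing choice in the definitions of $x^{(\ell)}$), so that each $V_\ell$ decomposes as an orthogonal direct sum and therefore $\pi_{V_\ell}=\pi_{W_\ell}+\pi_{\textnormal{span}\{x^{(\ell)}\}}$. This immediately gives the key identity
\begin{equation*}
S_{\mathcal{V}_\gamma}=\gamma_1^2\pi_{\textnormal{span}\{x^{(1)}\}}+\gamma_2^2\pi_{\textnormal{span}\{x^{(2)}\}}+S_{\mathcal{W}_\gamma},
\end{equation*}
which is the 2-excess analogue of $S_{\mathcal{V}_\gamma}=\gamma_0^2\pi_{V_0}+S_{\mathcal{W}_\gamma}$ used in Theorem \ref{main theorem gamma scalable}. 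Observe that, by hypothesis, the reduction to one-dimensional excess subspaces has already been built into the statement, so the awkward contradiction step that opened the proof of Theorem \ref{main theorem gamma scalable} is avoided here.

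For $(i)\Rightarrow(ii)$, I would use $S_{\mathcal{V}_\gamma}=I_\mathcal{H}$ together with the decomposition above to write $S_{\mathcal{W}_\gamma}^{-1}=\sum_{\ell=1}^{2}\gamma_\ell^{2}S_{\mathcal{W}_\gamma}^{-1}\pi_{\textnormal{span}\{x^{(\ell)}\}}+I_\mathcal{H}$. Sandwiching this between $\pi_{W_i}$ and $\pi_{W_j}$ and invoking Proposition \ref{mitra sh.}(iii), which gives $\pi_{W_i}S_{\mathcal{W}_\gamma}^{-1}\pi_{W_j}=\gamma_i^{-2}\delta_{i,j}\pi_{W_j}$, isolates the required identity after subtracting $\pi_{W_i}\pi_{W_j}$ from both sides.

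For $(ii)\Rightarrow(i)$, the strategy is to compute $S_{\mathcal{V}_\gamma}S_\mathcal{W}$ via $S_\mathcal{W}=\sum_{j=1}^\infty\pi_{W_j}$, insert $S_{\mathcal{W}_\gamma}S_{\mathcal{W}_\gamma}^{-1}=I_\mathcal{H}$, and then substitute (ii) term by term. After telescoping with Proposition \ref{mitra sh.}(iii) one obtains $S_{\mathcal{V}_\gamma}S_\mathcal{W}=S_\mathcal{W}$, and invertibility of $S_\mathcal{W}$ (since $\mathcal{W}$ is a fusion Riesz basis) yields $S_{\mathcal{V}_\gamma}=I_\mathcal{H}$.

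Finally, for $(ii)\Leftrightarrow(iii)$, the identification $\pi_{\textnormal{span}\{x^{(\ell)}\}}g=\langle g,x^{(\ell)}\rangle x^{(\ell)}$ for unit $x^{(\ell)}$ reduces matters to evaluating $\pi_{W_i}S_{\mathcal{W}_\gamma}^{-1}x^{(\ell)}$. Writing $x^{(\ell)}=\sum_k x_k^{(\ell)}$ with $x_k^{(\ell)}\in W_k$, and applying Proposition \ref{mitra sh.}(iii) term by term, collapses the sum to $\gamma_i^{-2}x_i^{(\ell)}$. Multiplying (ii) through by $\gamma_i^{2}$ then converts it into (iii), and the steps reverse. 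The main obstacle I anticipate is bookkeeping rather than ideas: verifying that the orthogonal decomposition of $\pi_{V_\ell}$ is legitimate (which rests on the careful indexing $x^{(1)}=\sum_{i\geq 2}x_i^{(1)}$ and $x^{(2)}=\sum_{i\neq 2}x_i^{(2)}$) and being careful that the operator equation in (ii) is read on $\mathcal{H}$ via the range of $\pi_{W_j}$, so that the conversion to (iii) loses no information.
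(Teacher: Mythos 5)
Your proposal is correct and follows exactly the route the paper intends: the published proof of this proposition is a one-line reference to the argument of Theorem \ref{main theorem gamma scalable}, and your write-up is precisely that argument adapted to two rank-one excess directions, using the decomposition $\gamma_\ell^2\pi_{V_\ell}=\gamma_\ell^2\pi_{W_\ell}+\gamma_\ell^2\pi_{\textnormal{span}\{x^{(\ell)}\}}$ and Proposition \ref{mitra sh.}(iii) in the same places. Your observation that the contradiction step of Theorem \ref{main theorem gamma scalable} is unnecessary here (the one-dimensionality of the excess directions being built into \eqref{every 2 excess}) is also accurate.
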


\begin{proof}
The proof is derived by using an analogous approach to Theorem \ref{main theorem gamma scalable}.
\end{proof}

In order to simplify the notation, the symbols $x$ and $y$ are used instead of $x^{(1)}$ and $x^{(2)},$ as the excess elements introduced in \eqref{every 2 excess}. One of our aims is to characterize weight-scalable 2-excess fusion frames. In this regard, we obtain some necessary conditions for the weight-scalability of such fusion frames. \color{black}

\begin{theorem}\label{one dim}
Let the 2-excess fusion frame $\mathcal{V}$ given by \eqref{every 2 excess} be weight-scalable. Then the following statements hold:
\begin{itemize}
\item[(1)] $\left\langle x,y \right\rangle =0,~\pi_{W_{i}}x=\dfrac{1-\gamma_{1}^{2}}{\gamma_{i}^{2}}x_{i}$ for all $i\geq 2.$ 

\item[(2)] $\pi_{W_{i}}y=\dfrac{1-\gamma_{2}^{2}}{\gamma_{i}^{2}}y_{i}$ for all $i\geq 2.$

\item[(3)] $ \gamma_{1},\gamma_{2} \neq 1,~W_{1}\perp W_{2}$ and $\textnormal{dim}W_{i}=1$ for $i=1,2.$ 

\item[(4)] $\left\langle x_{2},x \right\rangle =\dfrac{1-\gamma_{2}^{2}}{\gamma_{1}^{2}},~\pi_{W_{i}}x_{2}=\dfrac{\gamma_{2}^{2}-1}{\gamma_{i}^{2}}x_{i}$ for all $i\geq 3.$

\item[(5)] $\left\langle y_{1},y \right\rangle =\dfrac{1-\gamma_{1}^{2}}{\gamma_{2}^{2}},~\pi_{W_{i}}y_{1}=\dfrac{\gamma_{1}^{2}-1}{\gamma_{i}^{2}}y_{i}$ for all $i\geq 3.$

\item[(6)] $\Vert x_{2} \Vert ^2=\Vert y_{1} \Vert ^{-2}=\dfrac{\gamma_{2}^{2}(1-\gamma_{2}^{2})}{\gamma_{1}^{2}(1-\gamma_{1}^{2})}.$

\item[(7)] $1\leq \gamma_{1}^{2}+\gamma_{2}^{2} <2.$
\end{itemize}
\end{theorem}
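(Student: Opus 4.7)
The plan is to exploit Proposition \ref{main 2-excess f.f.} together with the operator identity derived from it. Since without loss of generality $x \perp W_1$ and $y \perp W_2$ (the standard reduction used in the $1$-excess analysis extends here), each $V_\ell$ is an orthogonal direct sum, giving $\pi_{V_\ell}=\pi_{W_\ell}+\langle\cdot,x^{(\ell)}\rangle x^{(\ell)}$, so $S_{\mathcal{V}_\gamma}=I_{\mathcal{H}}$ becomes
\begin{equation*}
S_{\mathcal{W}_\gamma}=I_{\mathcal{H}}-\gamma_1^2\langle\cdot,x\rangle x-\gamma_2^2\langle\cdot,y\rangle y.
\end{equation*}
Coupled with Proposition \ref{main 2-excess f.f.}(iii), which for any $g\in W_j$ reads $\sum_{\ell=1}^{2}\gamma_\ell^2\langle g,x^{(\ell)}\rangle x_i^{(\ell)}=(\delta_{i,j}-\gamma_i^2\pi_{W_i})g$, these two identities are the only tools needed.

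I would derive the seven parts in the order (3), (4), (5), (1), (2), (6), (7). For (3), feed $g\in W_1$ into the Proposition \ref{main 2-excess f.f.}(iii) identity with $j=1$: since $x\perp W_1$ the $x^{(1)}$-term vanishes, and the case $i=1$ then reads $(1-\gamma_1^2)g=\gamma_2^2\langle g,y\rangle y_1$, forcing $W_1\subseteq\textnormal{span}\{y_1\}$ (hence $\dim W_1=1$) and $\gamma_1\neq 1$. The symmetric choice $j=2$, $g\in W_2$ gives $W_2=\textnormal{span}\{x_2\}$, $\dim W_2=1$, $\gamma_2\neq 1$; taking $i=1$ in that same identity yields $\pi_{W_1}g=0$, i.e. $W_1\perp W_2$. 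Next for (4), with $g=x_2\in W_2$ and $\langle x_2,y\rangle=0$ (as $y\perp W_2$), the cases $i=2$ and $i\geq 3$ immediately give $\langle x_2,x\rangle=(1-\gamma_2^2)/\gamma_1^2$ and $\pi_{W_i}x_2=(\gamma_2^2-1)/\gamma_i^2\,x_i$; part (5) follows by the symmetric choice $g=y_1\in W_1$.

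For (1) and (2) I switch to the operator identity and apply it to $f=x$: using $\pi_{W_1}x=0$ one obtains $\sum_{i\geq 2}\gamma_i^2\pi_{W_i}x=(1-\gamma_1^2)x-\gamma_2^2\langle x,y\rangle y$. The $W_1$-component of the left side is zero (each summand already lies in some $W_i$ with $i\geq 2$), while the $W_1$-component of the right side, read through the Riesz decompositions of $x$ and $y$, is $-\gamma_2^2\langle x,y\rangle y_1$, forcing $\langle x,y\rangle=0$; matching the remaining $W_i$-components by uniqueness of the Riesz decomposition then gives $\pi_{W_i}x=(1-\gamma_1^2)/\gamma_i^2\,x_i$, and (2) is completely symmetric. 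Part (6) is now a routine comparison: since $W_2=\textnormal{span}\{x_2\}$ is one-dimensional, $\pi_{W_2}x=(\langle x,x_2\rangle/\Vert x_2\Vert^2)x_2$, and equating with $\pi_{W_2}x=(1-\gamma_1^2)/\gamma_2^2\, x_2$ from (1) together with $\langle x,x_2\rangle=(1-\gamma_2^2)/\gamma_1^2$ from (4) yields the stated value of $\Vert x_2\Vert^2$, the reciprocal relation for $\Vert y_1\Vert^2$ being symmetric.

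Finally for (7): the upper bound $\gamma_\ell<1$ (hence $\gamma_1^2+\gamma_2^2<2$) comes from positivity of $S_{\mathcal{W}_\gamma}$ applied to $x$, using $\langle x,y\rangle=0$ and $\Vert x\Vert=1$, which gives $\langle S_{\mathcal{W}_\gamma}x,x\rangle=1-\gamma_1^2\geq 0$; the lower bound $\gamma_1^2+\gamma_2^2\geq 1$ comes from $\Vert\pi_{W_2}x\Vert\leq\Vert x\Vert=1$ combined with (6), which after simplification collapses to $(1-\gamma_1^2)(1-\gamma_2^2)\leq\gamma_1^2\gamma_2^2$. The main obstacle I anticipate is not any single calculation but maintaining the logical order: in particular, the vanishing $\langle x,y\rangle=0$ in (1) can only be extracted once the one-dimensional orthogonal structure of $W_1,W_2$ from (3) is in place, since otherwise the Riesz-component matching that isolates the $y_1$ coefficient is not valid.
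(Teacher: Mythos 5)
Your overall strategy (the operator identity $S_{\mathcal{W}_\gamma}=I_{\mathcal H}-\gamma_1^2\langle\cdot,x\rangle x-\gamma_2^2\langle\cdot,y\rangle y$ plus Proposition~\ref{main 2-excess f.f.}(iii), followed by Riesz-component matching) is essentially the paper's, and several pieces are fine or even slightly cleaner — for instance your lower bound in (7) via $\Vert\pi_{W_2}x\Vert\leq 1$ is a neater route than the paper's computation of $\sum_{i\geq 3}\gamma_i^{-2}\Vert y_i\Vert^2\geq 0$. But there is a genuine gap at the step you place first and on which everything else rests.

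In your derivation of (3) you claim that the identity $(1-\gamma_1^2)g=\gamma_2^2\langle g,y\rangle y_1$ for all $g\in W_1$ \emph{forces} $W_1\subseteq\mathrm{span}\{y_1\}$ and $\gamma_1\neq 1$. It does not: if $\gamma_1=1$ the identity reduces to $\langle g,y\rangle y_1=0$ for all $g\in W_1$, which is satisfied whenever $y_1=0$ (or whenever $y\perp W_1$), and in that degenerate case the identity carries no information about $\dim W_1$ at all. The conclusion $W_1=\mathrm{span}\{y_1\}$ only follows once you know $\gamma_1\neq 1$, and $\gamma_1\neq1$ only follows once you know $y_1\neq 0$ together with some nondegeneracy of $\langle\cdot,y\rangle$ on $W_1$ — so your argument is circular at this point. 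This is exactly why the paper devotes the first half of its proof to a separate contradiction argument establishing $x_2\neq 0$ and $y_1\neq 0$ (splitting into the sub-cases $y_1=0$ and $y_1\neq 0$, and using the full reconstruction identity applied to $x$ and $y$ to force $x=0$ or $y=0$). That preliminary step is also what makes $\Vert y_1\Vert^{-2}$ in part (6) meaningful. Since you derive (4), (5), (1), (2), (6), (7) from (3) — and you yourself note that extracting $\langle x,y\rangle=0$ requires the one-dimensional orthogonal structure of $W_1,W_2$ — the missing exclusion of the case $\gamma_1=1$ (equivalently $y_1=0$, and symmetrically $\gamma_2=1$, $x_2=0$) is a load-bearing omission, not a cosmetic one. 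You need to supply that preliminary nonvanishing argument before your chain of deductions can start.
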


\begin{proof}
Since $\mathcal{V}$ is $\gamma$-scalable, then there exists a sequence of weights $\gamma=\{\gamma_{i}\}_{i=1}^\infty$ such that   
\begin{align}\label{00100}
f=\gamma_{1}^{2}\pi_{V_{1}}f+\gamma_{2}^{2}\pi_{V_{2}}f+\sum_{i =3}^{\infty}\gamma_{i}^{2}\pi_{W_{i}}f, \quad  (f\in \mathcal {H}).
\end{align}
First of all, it turns out that $y_{1},x_{2}\neq 0.$ To see this, we only prove that $x_{2}\neq 0$ and the proof of $y_{1}\neq 0$ is analogous. To the contrary, assume that $x_{2}=0.$ Putting a non zero vector $ f \in W_{2}$ in \eqref{00100} and using the Riesz decomposition property of $\mathcal{W},$ we get $(1-\gamma_{2}^{2})f=0.$ So, $\gamma_{2}=1.$ We now investigate two cases: If $y_{1}=0,$ it similarly follows $\gamma_{1}=1.$ Moreover, by taking $f=x$ in \eqref{00100}, we get
\begin{equation*}
\pi_{V_{2}}x+\sum_{i=3}^{\infty}\gamma_{i}^{2}\pi_{W_{i}}x =x-\pi_{V_{1}}x=0.
\end{equation*}
Hence, $\Vert \pi_{V_{2}}x \Vert ^2+\sum_{i=3}^{\infty}\gamma_{i}^{2}\Vert \pi_{W_{i}}x\Vert ^2 =\left\langle \pi_{V_{2}}x+\sum_{i=3}^{\infty}\gamma_{i}^{2}\pi_{W_{i}}x , x \right\rangle =0,$ which assures that $\pi_{V_{2}}x=0,$ and $\pi_{W_{i}}x=0$ for all $i\geq 3.$ Therefore, 
\begin{equation*}
\Vert x \Vert ^2=\left\langle x,\sum_{i=3}^{\infty} x_{i} \right\rangle =\sum_{i=3}^{\infty} \left\langle x,x_{i} \right\rangle =0. 
\end{equation*}
This means that $x=0,$ which is a contradiction. Otherwise, suppose that $y_{1}\neq0.$ Substituting $f=x$ in \eqref{00100} gives $x=\gamma_{1}^{2}x+\pi_{W_{2}}x+\left\langle x,y\right\rangle y+\sum_{i =3}^{\infty}\gamma_{i}^{2}\pi_{W_{i}}x.$ By employing the Riesz decomposition property of $\mathcal{W},$ we get $\left\langle x,y\right\rangle y_{1}=0.$ Since $y_{1}\neq 0,$ then $\left\langle x,y\right\rangle=0.$ Thus, by setting $f=y$ in \eqref{00100}, it yields
\begin{align*}
y&=y+\gamma_{1}^{2}\pi_{W_{1}}y+\gamma_{1}^{2}\left\langle y,x\right\rangle x+\sum_{i =3}^{\infty}\gamma_{i}^{2}\pi_{W_{i}}y
\\&=y+\gamma_{1}^{2}\pi_{W_{1}}y+\sum_{i =3}^{\infty}\gamma_{i}^{2}\pi_{W_{i}}y.
\end{align*}
So, we obtain $\pi_{W_{i}}y=0$ for all $i \neq 2.$ Since $y \perp W_{2},$ we induce that $y=0,$ which is again a contradiction. That is why $x_{2}\neq 0.$ Now, we proceed to prove the theorem.

(1) Taking $f=x$ in \eqref{00100} leads to
\begin{align*}
x=\gamma_{1}^{2}x+\gamma_{2}^{2}\pi_{W_{2}}x+\gamma_{2}^{2}\left\langle x,y\right\rangle y+\sum_{i =3}^{\infty}\gamma_{i}^{2}\pi_{W_{i}}x.
\end{align*}
By an analogous method to the first part of the proof, we get $\left\langle x,y \right\rangle=0.$ Furthermore, $x_{i}-\gamma_{1}^{2}x_{i}-\gamma_{i}^{2}\pi_{W_{i}}x=0$ for $i\geq 2,$ which ensures that $\pi_{W_{i}}x=\dfrac{1-\gamma_{1}^{2}}{\gamma_{i}^{2}}x_{i}$ for all $i\geq 2.$ Condition (2) is proved similar to (1) by setting $f=y$ in \eqref{00100}.

(3) We first note that, $\gamma_{i}\neq 1$ for $i=1,2.$ Indeed, if $\gamma_{1}=1,$ then it follows from (1) that $x \perp W_{i}$ for all $i \geq 2.$ Since $x \perp W_{1},$ we get $x=0,$ which is a contradiction, similarly $\gamma_{2}\neq 1.$ Putting $j=1,~i=1,2$ and $ g \in W_{1}$ in Proposition \ref{main 2-excess f.f.}(iii), we obtain
$g-\gamma_{1}^{2}g-\gamma_{2}^{2}\left\langle g,y\right\rangle y_{1}=0$
and $\gamma_{2}^{2}\pi_{W_{2}}g=0.$ Hence, $g=\dfrac{\gamma_{2}^{2}\left\langle g,y\right\rangle}{1-\gamma_{1}^{2}}y_{1}$ and $W_{1}\perp W_{2}.$ In particular, $W_{1}=\textnormal{span}\{y_{1}\}$. Similarly, taking $j=i=2$ and $ f\in W_{2}$ in Proposition \ref{main 2-excess f.f.}(iii), gives $f=\dfrac{\gamma_{1}^{2}\left\langle f,x\right\rangle}{1-\gamma_{2}^{2}}x_{2},$ which yields that $W_{2}=\textnormal{span}\{x_{2}\}.$

(4) Substituting $f=x_{2}$ in \eqref{00100} and applying (3), we get
\begin{align*}
x_{2}&=\gamma_{1}^{2}\left\langle x_{2},y_{1}\right\rangle \frac{y_{1}}{\Vert y_{1}\Vert ^2}+\gamma_{1}^{2}\left\langle x_{2},x\right\rangle x+\gamma_{2}^{2}x_{2}+\sum_{i =3}^{\infty}\gamma_{i}^{2}\pi_{W_{i}}x_{2}
\\&=\gamma_{1}^{2}\left\langle x_{2},x\right\rangle x+\gamma_{2}^{2}x_{2}+\sum_{i =3}^{\infty}\gamma_{i}^{2}\pi_{W_{i}}x_{2}.
\end{align*}
It follows from the Riesz decomposition property of $\mathcal{W}$ that
\begin{align*}
\begin{cases}
\left( 1-\gamma_{1}^{2}\left\langle x_{2},x \right\rangle -\gamma_{2}^{2}\right) x_{2}=0,
\\
\gamma_{1}^{2}\left\langle x_{2},x \right\rangle x_{i}+\gamma_{i}^{2}\pi_{W_{i}}x_{2}=0, \quad (i\geq 3).
\end{cases}
\end{align*}
The first equation gives $\left\langle x_{2},x \right\rangle =\dfrac{1-\gamma_{2}^{2}}{\gamma_{1}^{2}}.$ In addition, the second equation yields that $\pi_{W_{i}}x_{2}=\dfrac{\gamma_{2}^{2}-1}{\gamma_{i}^{2}}x_{i}$ for all $i\geq 3.$

(5) Putting $f=y_{1}$ in \eqref{00100}, the result is obtained by an analogous approach to (4). Condition (6) is also easily proved by using (1), (2), (4) and (5).

(7) Equation \eqref{00100} can be reformulated as follows:
\begin{equation*}
S_{\mathcal{W}_{\gamma}}f=f-\gamma_{1}^{2}\left\langle f,x \right\rangle x-\gamma_{2}^{2}\left\langle f,y \right\rangle y, \quad  (f\in \mathcal {H}).
\end{equation*}
In (3), we observed that $\gamma_{i}\neq 1$ for $i=1,2.$ Now, according to the positivity of  $S_{\mathcal{W}_{\gamma}},$ we get 
\begin{equation*}
\gamma_{1}^{2}\vert \left\langle f,x \right\rangle\vert ^2 +\gamma_{2}^{2} \vert \left\langle f,y \right\rangle \vert ^2 \leq \Vert f \Vert ^2, \quad  (f\in \mathcal {H}).
\end{equation*}
Substituting $x$ and $y$ for $f$, it derives that $\gamma_{i}< 1$ for $i=1,2.$ Applying (2) and (5) we obtain
\begin{align*}
\sum_{i=3}^\infty \gamma_{i}^{-2} \Vert y_{i} \Vert ^2 &=\sum_{i=3}^\infty \gamma_{i}^{-2} \left\langle y_{i},y_{i} \right\rangle 
\\&=\sum_{i=3}^\infty \gamma_{i}^{-2} \left\langle \frac{\gamma_{i}^2}{1-\gamma_{2}^{2}}\pi_{W_{i}}y,\frac{\gamma_{i}^2}{\gamma_{1}^{2}-1}\pi_{W_{i}}y_{1} \right\rangle 
\\&=\frac{1}{\left( 1-\gamma_{2}^{2}\right) \left( \gamma_{1}^{2}-1\right) }\sum_{i=3}^\infty \gamma_{i}^{2} \left\langle \pi_{W_{i}}y,\pi_{W_{i}}y_{1} \right\rangle 
\\&=\frac{1}{\left( 1-\gamma_{2}^{2}\right) \left( \gamma_{1}^{2}-1\right) } \left\langle \sum_{i=3}^\infty \gamma_{i}^{2}\pi_{W_{i}}y,y_{1} \right\rangle .
\end{align*}
Replacing $f$ by $y$ in \eqref{00100} yields
\begin{equation}\label{1}
\begin{aligned}
\sum_{i=3}^\infty \gamma_{i}^{-2} \Vert y_{i} \Vert ^2 &=\frac{1}{\left( 1-\gamma_{2}^{2}\right) \left( \gamma_{1}^{2}-1\right) } \left\langle \sum_{i=3}^\infty \gamma_{i}^{2}\pi_{W_{i}}y,y_{1} \right\rangle
\\&=\frac{1-\left( \gamma_{1}^{2}+\gamma_{2}^{2}\right) }{(1-\gamma_{2}^{2})(\gamma_{1}^{2}-1)}\left\langle y,y_{1} \right\rangle 
\\&=\frac{\gamma_{1}^{2}+\gamma_{2}^{2}-1}{(1-\gamma_{2}^{2})(1-\gamma_{1}^{2})}\times \frac{1-\gamma_{1}^{2}}{\gamma_{2}^{2}}=\frac{\gamma_{1}^{2}+\gamma_{2}^{2}-1}{\gamma_{2}^{2}(1-\gamma_{2}^{2})}\geq 0.
\end{aligned}
\end{equation}
%%%%%%%%%%%%%%%%%%%%%%%%%%%%%%%%%%%%%%%%%%%%%%%%%%
Since $\gamma_{i}< 1~(i=1,2),$ it easily follows from the last inequality that $\gamma_{1}^{2}+\gamma_{2}^{2} \geq 1.$ Therefore, we induce that $1\leq \gamma_{1}^{2}+\gamma_{2}^{2} <2,$ which completes the proof.
\end{proof}
\color{black}

In light of the proof of Theorem \ref{one dim}(7) and analogous to \eqref{1}, it can be concluded that 
\begin{equation}\label{2}
\sum_{i=3}^\infty \gamma_{i}^{-2} \Vert x_{i} \Vert ^2=\frac{\gamma_{1}^{2}+\gamma_{2}^{2}-1}{\gamma_{1}^{2}(1-\gamma_{1}^{2})}\geq 0.
\end{equation}
Thus, if the 2-excess fusion frame $\mathcal{V}$ is $\gamma$-scalable for some $\gamma$ such that $\gamma_{1}^{2}+\gamma_{2}^{2}=1,$ then it follows from \eqref{1} and \eqref{2} that $y_{i}=x_{i}=0$ for all $i \geq 3$, and thereby $ x\in W_{2}$ and $y\in W_{1}.$
In this case, we will obtain an equivalent condition for the weight-scalability of $\mathcal{V},$ see Corollary \ref{2-excess scalability}. Now, we provide two examples of 2-excess fusion frames that are weight-scalable.

\begin{example}\label{Counterexample}
(1) Let $\{e_{i}\}_{i=1}^3$ be the canonical orthonormal basis for $\mathcal {H}_3.$ Consider $\mathcal{V}=\left\lbrace W_{1} \oplus W_{2},W_{2} \oplus W_{1},W_{3}\right\rbrace ,$ where $W_{i}=\textnormal{span}\{e_{i}\}$ for $1\leq i \leq 3.$ Then $\mathcal{V}$ is a 2-excess fusion frame containing the fusion orthonormal basis $\{W_{i}\}_{i=1}^3.$ It is easy to see that $\mathcal{V}$ is $\gamma$-scalable by $\gamma_{i}=\sqrt{1/2}$ for $i=1,2$ and $\gamma_{3}=1.$

(2) Let $\{e_{i}\}_{i=1}^4$ be the canonical orthonormal basis for $\mathcal {H}_4.$ Take 
\begin{align*}
V_{1}&= W_{1} \oplus \textnormal{span}\left\lbrace \frac{1}{2}e_{2}+\frac{\sqrt{3}}{2}e_{4} \right\rbrace , 
\\
V_{2}&= W_{2} \oplus \textnormal{span}\left\lbrace \frac{1}{2}e_{1}+\frac{\sqrt{3}}{2}e_{3} \right\rbrace,
\\
W_{3}&= \textnormal{span}\left\lbrace \frac{1}{2}e_{2}-\frac{\sqrt{3}}{2}e_{4} \right\rbrace, ~ W_{4}= \textnormal{span}\left\lbrace \frac{1}{2}e_{1}-\frac{\sqrt{3}}{2}e_{3} \right\rbrace ,
\end{align*}
where $W_{i}=\textnormal{span}\{e_{i}\}$ for $i=1,2$ and $\mathcal{W}=\{W_{i}\}_{i=1}^4 $ is a fusion Riesz basis for $\mathcal {H}_4.$ Then, the 2-excess fusion frame $\mathcal{V}=\{V_{1},V_{2},W_{3},W_{4}\}$ is $\gamma$-scalable by $\gamma_{i}=\sqrt{2/3}$ for $1\leq i \leq 4.$ More precisely, 
\begin{align*}
S_{\mathcal{V}_{\gamma}}&=\sum_{i=1}^2\gamma_{i}^2\pi_{W_{i}}+\gamma_{1}^2\pi_{\textnormal{span}\left\lbrace \frac{1}{2}e_{2}+\frac{\sqrt{3}}{2}e_{4} \right\rbrace} +\gamma_{2}^2\pi_{\textnormal{span}\left\lbrace \frac{1}{2}e_{1}+\frac{\sqrt{3}}{2}e_{3} \right\rbrace}
\\&+\gamma_{3}^2\pi_{\textnormal{span}\left\lbrace \frac{1}{2}e_{2}-\frac{\sqrt{3}}{2}e_{4} \right\rbrace}+\gamma_{4}^2\pi_{\textnormal{span}\left\lbrace \frac{1}{2}e_{1}-\frac{\sqrt{3}}{2}e_{3} \right\rbrace}.
\end{align*}
Thus, for every $(a,b,c,d) \in \mathcal {H}_4,$ we get
\begin{align*}
S_{\mathcal{V}_{\gamma}}(a,b,c,d)&=2/3\bigg[ \left( a,b,0,0\right) +\left( 0,(b+\sqrt{3}d)/4,0,(\sqrt{3}b+3d)/4\right) 
\\&+\left((a+\sqrt{3}c)/4,0,(\sqrt{3}a+3c)/4,0\right) 
\\&+\left( 0,(b-\sqrt{3}d)/4,0,(3d-\sqrt{3}b)/4\right)
\\&+\left( (a-\sqrt{3}c)/4,0,(3c-\sqrt{3}a)/4,0\right) \bigg] =(a,b,c,d),
\end{align*}
as required.
\end{example}

In the preceding theorem, it was shown that the 2-excess fusion frame $\mathcal{V}$ is not weight-scalable whenever $x_{2}=0$ or $y_{1}=0.$ The following theorem examines the weight-scalability of $\mathcal{V}$ in other cases for $x$ and $y.$

\begin{theorem}\label{30}
Let $\mathcal{W}=\{W_{i}\}_{i=1}^\infty$ be a fusion Riesz basis for $\mathcal{H}.$ If $x_{i}=0~(y_{i}=0)$ for any $i \geq 3$ and $y_{j} \neq 0~(x_{j} \neq 0)$ for some $j \geq 3,$ then the $2$-excess fusion frame $\mathcal{V}$ introduced in \eqref{every 2 excess} is not weight-scalable.
\end{theorem}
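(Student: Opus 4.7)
The plan is to argue by contradiction using the summation identities \eqref{1} and \eqref{2} established in the proof of Theorem \ref{one dim}. By the inherent symmetry in the statement between $x$ and $y$, it will suffice to handle the case in which $x_i = 0$ for all $i \geq 3$ while $y_j \neq 0$ for some $j \geq 3$; the parallel case follows by interchanging the roles of $x$ and $y$ (and correspondingly of $\gamma_1$ and $\gamma_2$) throughout.

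Assuming toward a contradiction that $\mathcal{V}$ is $\gamma$-scalable, I would first record from Theorem \ref{one dim}(3) and (7) that $0 < \gamma_1, \gamma_2 < 1$, so that the quantities $\gamma_1^2(1-\gamma_1^2)$ and $\gamma_2^2(1-\gamma_2^2)$ are strictly positive. Next I would feed the hypothesis $x_i = 0$ (for all $i \geq 3$) into \eqref{2}: the entire left-hand side vanishes, and the strict positivity of the denominator forces $\gamma_1^2 + \gamma_2^2 = 1$.

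Then I would substitute this equality into \eqref{1}, whose right-hand side collapses to $0$, yielding $\sum_{i=3}^\infty \gamma_i^{-2}\Vert y_i \Vert^2 = 0$. Since every $\gamma_i > 0$, this forces $y_i = 0$ for all $i \geq 3$, which directly contradicts the hypothesis that $y_j \neq 0$ for some $j \geq 3$. The reverse case, where the roles of $x$ and $y$ are swapped, is handled by noticing that identities \eqref{1} and \eqref{2} are themselves exchanged under that swap, so the same chain of reasoning applies verbatim.

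I anticipate no substantial obstacle here: the heavy lifting has already been done in Theorem \ref{one dim}, where identities \eqref{1} and \eqref{2} were derived from the full $\gamma$-scalability equation \eqref{00100}. Once these identities and the positivity bounds $\gamma_1, \gamma_2 \in (0,1)$ are in hand, the argument reduces to a short, purely algebraic deduction, and the only point requiring mild care is articulating the symmetric case cleanly.
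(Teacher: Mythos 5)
Your proposal is correct and follows essentially the same route as the paper: the paper's own proof also reduces the main case to showing $\gamma_1^2+\gamma_2^2=1$ (it does so via Theorem \ref{one dim}(4) applied to $x$ being a unit vector of $W_2$, after separately disposing of the subcase $y_1=0$) and then invokes \eqref{1} to force $y_i=0$ for $i\geq 3$. Your variant of obtaining $\gamma_1^2+\gamma_2^2=1$ directly from \eqref{2} is a mild streamlining that also makes the $y_1=0$ case split unnecessary, and the symmetry argument for the swapped case is sound; the only cosmetic caveat is that $\gamma_1,\gamma_2<1$ is established inside the proof of Theorem \ref{one dim}(7) rather than in its statement, though for your argument $\gamma_i>0$ and $\gamma_i\neq 1$ (from part (3)) already suffice to make the denominators nonzero.
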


\begin{proof}
For the sake of contradiction, assume that $\mathcal{V}$ is $\gamma$-scalable. Then there exists a sequence of weights $\gamma=\{\gamma_{i}\}_{i=1}^\infty$ such that \eqref{00100} holds. Suppose that $x_{i}=0~(i \geq 3),$ so $x_{2}\neq0.$ If $y_{1}= 0,$ then $\gamma_{1}=1,$ by the proof of Theorem \ref{one dim}. Putting $f=x$ in \eqref{00100} leads to $x=x+\gamma_{2}^{2} x+\sum_{i =3}^{\infty}\gamma_{i}^{2}\pi_{W_{i}}x.$ It follows from the Riesz decomposition property of $\mathcal{W}$ that $\gamma_{2}=0,$ a contradiction. Otherwise, assume that $y_{1}\neq 0.$ Since $x$ is a unit vector of $W_{2},$ Theorem \ref{one dim}(4) assures that $\gamma_{1}^2 +\gamma_{2}^2=1.$ In view of \eqref{1}, it implies that $y_{i}=0$ for all $i \geq 3$, which is in conflict with the assumption. Therefore, $\mathcal{V}$ is not weight-scalable. A similar result is obtained whenever $y_{i}=0~(i \geq 3),$ as desired.
\end{proof}

\begin{example}
Let $\{e_{i}\}_{i=1}^{\infty}$ be an orthonormal basis for $\mathcal {H}.$ Consider 
\begin{align*}
V_{1}&= \textnormal{span}\{e_{1},e_{2}\} \oplus \textnormal{span}\{e_{3}\},
\\
V_{2}&= \textnormal{span}\{e_{3}\} \oplus \textnormal{span}\{ \alpha_{1}e_{1}+\alpha_{2}e_{2}+\beta(\alpha_{3}e_{1}+\alpha_{4}e_{4})+\alpha_{7}e_{6}\},
\\
V_{3}&= \textnormal{span}\{\alpha_{3}e_{1}+\alpha_{4}e_{4},\alpha_{5}e_{2}+\alpha_{6}e_{5},e_{6}\},
\\
V_{4}&= \textnormal{span}\{e_{\ell}\}_{\ell \geq 7},
\end{align*}
where $ \alpha_{i}, \beta \in \Bbb C$ for all $1 \leq i \leq 7$ and $\alpha_{i}\neq 0$ for $i=4,6$. Then $\mathcal{V}=\{V_{i}\}_{i=1}^{4}$ is a 2-excess fusion frame for $\mathcal{H}$. Assume that $\mathcal{V}$ is $\gamma$-scalable, then there exists a sequence of weights $\gamma=\{\gamma_{i}\}_{i=1}^{4}$ such that 
\begin{equation}\label{last}
f=\sum_{i =1}^{4}\gamma_{i}^{2}\pi_{V_{i}}f, \quad  (f\in \mathcal {H}).
\end{equation}
Putting $f=e_{3}$ in \eqref{last} gives $\gamma_{1}^2 +\gamma_{2}^2=1$. Moreover, replacing $f$ by $e_{5}$ in \eqref{last} yields
\begin{align*}
e_{5}&=\gamma_{3}^{2}\pi_{V_{3}}e_{5}=\gamma_{3}^{2} \pi_{\textnormal{span}\{\alpha_{5}e_{2}+\alpha_{6}e_{5}\}}e_{5} 
\\&=\frac{\gamma_{3}^2}{\alpha_{5}^2+\alpha_{6}^2}\left\langle e_{5},\alpha_{5}e_{2}+\alpha_{6}e_{5} \right\rangle (\alpha_{5}e_{2}+\alpha_{6}e_{5})
=\frac{\gamma_{3}^2}{\alpha_{5}^2+\alpha_{6}^2}\left( \alpha_{5}\alpha_{6}e_{2}+\alpha_{6}^2e_{5}\right).
\end{align*}
Since $\alpha_{6}\neq 0,$ then we get $\alpha_{5}=0$. Now, by setting $f=e_{2}$ in \eqref{last}, we have
\begin{equation*}
\begin{aligned}
e_{2}&=\gamma_{1}^{2}\pi_{V_{1}}e_{2}+\gamma_{2}^{2}\pi_{V_{2}}e_{2}
\\&=\gamma_{1}^{2}e_{2}+\gamma_{2}^{2} \pi_{\textnormal{span}\{  (\alpha_{1}+\beta\alpha_{3}) e_{1}+\alpha_{2}e_{2}+\beta \alpha_{4}e_{4}+\alpha_{7}e_{6}\}}e_{2} 
\\&=\left( \gamma_{1}^{2}+\frac{\gamma_{2}^2 \alpha_{2}^2}{c}\right) e_{2}+\frac{\gamma_{2}^2\alpha_{2}}{c}\left(( \alpha_{1}+\beta\alpha_{3})e_{1}+\beta \alpha_{4} e_{4}+\alpha_{7} e_{6} \right),
\end{aligned}
\end{equation*}
where $c= (\alpha_{1}+\beta\alpha_{3})^2+\alpha_{2}^2+(\beta \alpha_{4})^2+\alpha_{7}^2$. If $\alpha_{2}=0,$ then it follows that $\gamma_{1}=1$ and thereby $\gamma_{2}=0,$ which is a contradiction. Otherwise, assume that $\alpha_{2}\neq0.$ Since $\alpha_{4}\neq 0,$ then $\alpha_{7}=\alpha_{1}=\beta=0.$ Hence, by substituting $f=e_{1}$ in \eqref{last} gives
\begin{align*}
e_{1}&=\gamma_{1}^{2}\pi_{V_{1}}e_{1}+\gamma_{3}^{2}\pi_{V_{3}}e_{1}
\\&=\gamma_{1}^{2}e_{1}+\gamma_{3}^{2}\pi_{\textnormal{span}\{\alpha_{3}e_{1}+\alpha_{4}e_{4}\}}e_{1}
\\&=\gamma_{1}^{2}e_{1}+\frac{\gamma_{3}^2}{\alpha_{3}^2+\alpha_{4}^2}\left\langle e_{1},\alpha_{3}e_{1}+\alpha_{4}e_{4} \right\rangle (\alpha_{3}e_{1}+\alpha_{4}e_{4})
\\&=\left( \gamma_{1}^{2}+\frac{\gamma_{3}^2\alpha_{3}^2}{\alpha_{3}^2+\alpha_{4}^2}\right) e_{1}+\frac{\gamma_{3}^2 \alpha_{3}\alpha_{4}}{\alpha_{3}^2+\alpha_{4}^2}e_{4}.
\end{align*}
It derives that $\alpha_{3}=0,$ as $\alpha_{4}\neq 0.$ Thus, $\gamma_{1}=1$ and thereby $\gamma_{2}=0,$ which is again a contradiction. Therefore, $\mathcal{V}$ is not weight-scalable. It is worthwhile to mention that the result can be obtained directly from Theorem \ref{30}.
\end{example}

In Example \ref{Counterexample}(1), a weight-scalable 2-excess fusion frame is provided for $\mathcal {H}_3$ that contains an orthogonal fusion Riesz basis. The subsequent result shows that the orthogonality of the Riesz part is a necessary condition for the weight-scalability of certain 2-excess fusion frames in $\mathcal {H}_3.$

\begin{corollary}\label{Riesz orthogonal}
Let the 2-excess fusion frame $\mathcal{V}$ given by \eqref{every 2 excess} be weight-scalable for $\mathcal {H}_3$. Then every fusion Riesz basis contained in $\mathcal{V}$ is orthogonal.
\end{corollary}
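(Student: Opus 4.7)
My plan is to reduce to a finite list of candidate fusion Riesz bases inside $\mathcal{V}$, then use the weight-scalability identity $S_{\mathcal{V}_\gamma}=I_{\mathcal{H}_3}$ together with the structural results of Theorem \ref{one dim} and equations \eqref{1}--\eqref{2} to force each candidate to be orthogonal.

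By Theorem \ref{one dim}(3), $W_1\perp W_2$ and $\dim W_1=\dim W_2=1$. Because $\mathcal{W}$ is a fusion Riesz basis of $\mathcal{H}_3$, the Riesz decomposition property forces $\dim W_3=1$, so without loss of generality $\mathcal{V}=\{V_1,V_2,W_3\}$ with $\dim V_\ell=2$ for $\ell=1,2$ and $\dim W_3=1$. A dimension count then singles out $\{V_1,W_3\}$ and $\{V_2,W_3\}$ as the only subfamilies of $\mathcal{V}$ that can constitute a fusion Riesz basis of $\mathcal{H}_3$; these are the only candidates I need to verify.

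Next I would distinguish the two situations allowed by Theorem \ref{one dim}(7). In the boundary case $\gamma_1^2+\gamma_2^2=1$, equations \eqref{1} and \eqref{2} force $x_3=y_3=0$, so $x\in W_2$ and $y\in W_1$, whence $V_1=V_2=W_1\oplus W_2$. The scalability identity then reduces to
\[
\pi_{W_1\oplus W_2}+\gamma_3^2\pi_{W_3}=I_{\mathcal{H}_3},
\]
and comparing the rank-one parts yields $W_3=(W_1\oplus W_2)^\perp$ together with $\gamma_3=1$. In particular $W_3\perp V_1=V_2$, so both $\{V_1,W_3\}$ and $\{V_2,W_3\}$ are orthogonal.

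The main obstacle is the strict case $\gamma_1^2+\gamma_2^2>1$, which I plan to eliminate by contradiction. Here \eqref{1} and \eqref{2} give $x_3,y_3\neq 0$, and since $\dim W_3=1$ both vectors span $W_3$. Using $x_2\in W_2\perp W_1$ together with $x\perp W_1$ yields $x_3\perp W_1$, hence $W_3\perp W_1$; the symmetric argument with $y$ gives $W_3\perp W_2$, so $\mathcal{W}$ is in fact orthogonal. Choosing an orthonormal basis $\{e_1,e_2,e_3\}$ of $\mathcal{H}_3$ with $W_i=\textnormal{span}\{e_i\}$, I can write $x=\alpha e_2+\beta e_3$ and $y=\mu e_1+\nu e_3$ with all four scalars nonzero (the components $x_2,y_1$ are always nonzero by the proof of Theorem \ref{one dim}, and $x_3,y_3$ are nonzero by assumption). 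Since $e_1\perp x$ and $e_2\perp y$, the splittings $\pi_{V_1}=\pi_{W_1}+\pi_{\textnormal{span}\{x\}}$ and $\pi_{V_2}=\pi_{W_2}+\pi_{\textnormal{span}\{y\}}$ are orthogonal, and a direct computation returns $\langle S_{\mathcal{V}_\gamma}e_3,e_1\rangle=\gamma_2^2\mu\nu\neq 0$, contradicting $S_{\mathcal{V}_\gamma}=I_{\mathcal{H}_3}$. Hence the strict case cannot occur, and only the orthogonal configuration from the boundary case survives.
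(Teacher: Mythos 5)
Your proof is correct, and it reaches the conclusion by a genuinely different route from the paper's. Both arguments start the same way, using Theorem~\ref{one dim}(3) to force $\dim W_1=\dim W_2=1$, $W_1\perp W_2$, and hence $\dim W_3=1$. From there the paper parametrizes $W_3=\textnormal{span}\{u\}$ and the excess vectors as $x=\alpha e_2+\alpha'u$, $y=\beta e_1+\beta'u$, uses $\langle x,y\rangle=0$ to get $\alpha'\beta'=0$, and then solves the linear systems obtained by feeding basis vectors into $S_{\mathcal{V}_\gamma}=I$ in each of the resulting cases; you instead split on the dichotomy $\gamma_1^2+\gamma_2^2=1$ versus $\gamma_1^2+\gamma_2^2>1$ supplied by \eqref{1}--\eqref{2}, settle the boundary case structurally ($V_1=V_2=W_1\oplus W_2$, $W_3=(W_1\oplus W_2)^\perp$), and kill the strict case by a single off-diagonal entry of $S_{\mathcal{V}_\gamma}$. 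Two remarks. First, you read ``fusion Riesz basis contained in $\mathcal{V}$'' as a subfamily of $\mathcal{V}$ (so your candidates are $\{V_1,W_3\}$ and $\{V_2,W_3\}$), whereas the paper means the Riesz part $\mathcal{W}=\{W_1,W_2,W_3\}$ of the decomposition \eqref{every 2 excess} (see the sentence preceding the corollary about ``orthogonality of the Riesz part''); this costs you nothing, since in your boundary case $W_3\perp W_1\oplus W_2$ gives orthogonality of $\mathcal{W}$, and in your strict case you establish orthogonality of $\mathcal{W}$ before deriving the contradiction --- in fact you prove the stronger statement that $V_1=V_2=W_3^\perp$, which the paper does not record. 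Second, your strict-case elimination can be shortened: once you know $W_3\perp W_1$ and $W_3\perp W_2$ there, you have $\langle x,y\rangle=\beta\bar{\nu}\neq 0$, which already contradicts Theorem~\ref{one dim}(1) without computing $\langle S_{\mathcal{V}_\gamma}e_3,e_1\rangle$. The one implicit assumption you rely on --- that $x\perp W_1$ and $y\perp W_2$, i.e.\ that the $\oplus$ in \eqref{every 2 excess} is an orthogonal sum --- is the same convention the paper uses in the proof of Theorem~\ref{one dim}, so it is not a gap.
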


\begin{proof}
Suppose that $\mathcal{V}=\{V_{i}\}_{i=1}^{n}~(n \leq 3)$ is a $\gamma$-scalable 2-excess fusion frame containing a fusion Riesz basis $\mathcal{W}$. Then there exists a sequence of weights $\gamma=\{\gamma_{i}\}_{i=1}^{n}$ such that 
\begin{equation}\label{rty}
f=\sum_{i =1}^{n}\gamma_{i}^{2}\pi_{V_{i}}f, \quad  (f\in \mathcal {H}).
\end{equation}
As a result of the proof of Theorem \ref{main theorem gamma scalable}, the excess elements cannot be contained in only one subspace of $\mathcal{W},$ see also Corollary \ref{k-excess in one subspace}. By keeping this fact in mind, let $\{e_{i}\}_{i=1}^3$ be the canonical orthonormal basis for $\mathcal {H}_3.$ Notice that, Theorem \ref{one dim}(3) ensures that $\mathcal{W}$ must have more than two subspaces, as otherwise it is incomplete. Hence, $V_{i}$'s are represented as
\begin{align*}
V_{1}&= W_{1} \oplus \textnormal{span}\{x\},
\\
V_{2}&= W_{2} \oplus \textnormal{span}\{y\},
\\
V_{3}&= W_{3},
\end{align*}
where $x,y$ are the redundant elements of $\mathcal{V}.$ Then, according to Theorem \ref{one dim}(3) and without losing the generality, $V_{i}$'s can be rewritten as follows:
\begin{eqnarray}\label{in R3}
\begin{aligned}
V_{1}&= \textnormal{span}\{e_{1}\} \oplus \textnormal{span}\{\alpha e_{2}+\alpha' u\},
\\
V_{2}&= \textnormal{span}\{e_{2}\} \oplus \textnormal{span}\{ \beta e_{1}+\beta' u\},
\\
V_{3}&= \textnormal{span}\{u\},
\end{aligned}
\end{eqnarray}
where $ \alpha,\alpha',\beta,\beta' \in \Bbb C$ and $\alpha\beta \neq 0$. Moreover, the redundant vectors and $u=(u_{1},u_{2},u_{3})$ are unit vectors of $\mathcal {H}_3$ such that $u_{3}\neq 0.$ In light of \eqref{in R3}, we infer that 
\begin{equation}\label{casess}
\alpha'u_{1}=\beta'u_{2}=0.
\end{equation}
Furthermore, by Theorem \ref{one dim}(1) we have $\left\langle \alpha e_{2}+\alpha' u, \beta e_{1}+\beta' u \right\rangle =0,$ which yields $\alpha'\beta'=0.$ First, assume that $\alpha'=\beta'=0,$ then \eqref{in R3} is reformulated as follows: 
\begin{equation*}
V_{1}=V_{2}= \textnormal{span}\{e_{1},e_{2}\},~V_{3}= \textnormal{span}\{u\}.
\end{equation*}
Putting $f=e_{3}$ in \eqref{rty} leads 
\begin{align*}
e_{3}&=\gamma_{3}^{2}\pi_{V_{3}}e_{3}=\gamma_{3}^{2} \left\langle e_{3},u \right\rangle u 
\\&=\gamma_{3}^2\left(u_{1}u_{3}e_{1}+u_{2}u_{3}e_{2}+u_{3}^2e_{3}\right).
\end{align*}
Since $u_{3}\neq 0$, we get $u_{1}=u_{2}=0,$ that means $\mathcal{W}$ is orthogonal. Now, suppose that $\alpha'\neq 0$ and $\beta'=0,$ then $u_{1}=0,$ by \eqref{casess}. Thus, \eqref{in R3}, without losing the generality, is represented in the following manner:
\begin{eqnarray*}
\begin{aligned}
V_{1}&= \textnormal{span}\{e_{1}\} \oplus \textnormal{span}\{a e_{2}+b e_{3}\},
\\
V_{2}&= \textnormal{span}\{e_{1},e_{2}\},~V_{3}= \textnormal{span}\{u_{2}e_{2}+u_{3}e_{3}\},
\end{aligned}
\end{eqnarray*}
where $ a^{2}+b^{2}=u_{2}^{2}+u_{3}^{2}=1$ and $ b\neq 0.$ Substituting $f=e_{i}~(1\leq i \leq 3)$ in \eqref{rty} gives
\begin{align*}
\begin{cases}
(1)~\gamma_{1}^2+\gamma_{2}^2=1,
\\
(2)~a^2\gamma_{1}^2+\gamma_{2}^2+u_{2}^2\gamma_{3}^2=1,
\\
(3)~b^2\gamma_{1}^2+u_{3}^2\gamma_{3}^2=1,
\\
(4)~ab\gamma_{1}^2+u_{2}u_{3}\gamma_{3}^2=0.
\\
\end{cases}
\end{align*}
Summing equations (2) and (3) and using (1), we get $\gamma_{3}=1.$ If $u_{2}\neq 0,$ then by (2), (3) and (4) we obtain
\begin{align*}
\gamma_{2}^2=1-u_{2}^2-a^2\gamma_{1}^2&=u_{3}^2-\dfrac{a^2b^2\gamma_{1}^4}{b^2\gamma_{1}^2}
\\&=u_{3}^2-\dfrac{u_{2}^2u_{3}^2}{1-u_{3}^2}=u_{3}^2-u_{3}^2=0,
\end{align*}
a contradiction. Thus, $u_{2}=0,$ which implies that $\mathcal{W}$ is orthogonal. Also, in the case where $\alpha'=0$ and $\beta'\neq 0,$ the desired result is obtained by a similar argument.
\end{proof}

The next example shows that the converse of Corollary \ref{Riesz orthogonal} is not valid, in general. Obviously, the converse holds whenever $V_{1}=V_{2}$ in \eqref{in R3}.

\begin{example}
Let $\{e_{i}\}_{i=1}^3$ be the canonical orthonormal basis for $\mathcal {H}_3$ and $W_{i}=\textnormal{span}\{e_{i}\}$ for $1\leq i \leq 3.$ Then $\mathcal{V}=\left\lbrace W_{1} \oplus W_{2},W_{2} \oplus W_{3},W_{3}\right\rbrace $ is a 2-excess fusion frame containing the fusion orthonormal basis $\{W_{i}\}_{i=1}^3.$ However, $\mathcal{V}$ is not weight-scalable by Theorem \ref{30}.
\end{example}

\section{Weight-Scalability: General Case}

Finally, we would like to discuss the weight-scalability of fusion frames with higher excess. First of all, we note that tight fusion frames are clearly weight-scalable. Now, we consider fusion frames in which a specific subspace of a fusion Riesz basis is repeated. In the other words, let
\begin{equation*}
\mathcal{V}=\{W_{\ell},\ldots ,W_{\ell}\}\cup \{W_{i}\}_{i\in I}
\end{equation*}
be a fusion frame with the Riesz part $\mathcal{W}=\{W_{i}\}_{i \in I}$ and the redundant subspace $W_{\ell}~(\ell \in I)$ occurs $n$ times. Then, a direct computation shows that the following conditions are equivalent:
\begin{itemize}
\item[(i)] $\mathcal{V}$ is $\gamma$-scalable, $\gamma=\{\gamma_{i}\}_{i \in I}$.
\item[(ii)] $\mathcal{W}$ is $\omega$-scalable, $\omega_{i}=1,~i \in I.$
\item[(iii)] $\mathcal{W}$ is orthogonal,
\end{itemize}
where
\begin{equation*}
\gamma_{i}=\begin{cases}
1,& i\neq \ell,
\\
\frac{1}{\sqrt{n+1}},& i=\ell.
\end{cases}
\end{equation*}
This indicates that weights play an important role in the scalability of such fusion frames. The same result can be extended to the fusion frames with more redundant subspaces. Inspired by Theorem \ref{main theorem gamma scalable}, we now obtain an analogous result for the weight-scalability of $k$-excess fusion frames of the form 
\begin{equation}\label{gbh}
\mathcal{V}= \{V_{\ell}\}_{\ell =-k}^{-1} \cup \mathcal{W}, \quad (k\geq 2),
\end{equation}
where $\mathcal{W}=\{W_{i}\}_{i =1}^{\infty}$ is a fusion Riesz basis, $V_{\ell}=\textnormal{span}\{x^{(\ell)}\}$ and the redundant element $x^{(\ell)} =\sum_{i=1}^{\infty} x^{(\ell)}_{i}$ is a unit vector of $\mathcal {H}$ such that $x^{(\ell)}_{i} \in W_{i}$ for each $i \geq 1$. 

\begin{proposition}\label{1-excess general}
Let $\mathcal{V}$ be the $k$-excess fusion frame defined by \eqref{gbh} and $\mathcal{W}$ a fusion Riesz basis for $\mathcal {H}.$ Then the following conditions are equivalent:
\begin{itemize}
\item[(i)] $\mathcal{V}$ is $\gamma$-scalable, $\gamma=\{\gamma_{i}\}_{i=-k}^{\infty}$.
\item[(ii)] $\sum_{\ell =-k}^{-1}\gamma_{\ell}^{2}\pi_{W_{i}}S_{\mathcal{W}_{\gamma}}^{-1}\pi_{V_{\ell}}\pi_{W_{j}} =\left( \gamma_{i}^{-2}\delta_{i,j}-\pi_{W_{i}}\right)\pi_{W_{j}},$ for all $i , j \geq 1.$
\item[(iii)] $\sum_{\ell =-k}^{-1}\gamma_{\ell}^{2}\left\langle \pi_{W_{j}}f,x^{(\ell)} \right\rangle x^{(\ell)}_{i} =\left( \delta_{i,j}-\gamma_{i}^{2}\pi_{W_{i}}\right)\pi_{W_{j}}f,$
for all $f\in \mathcal {H}.$
\end{itemize}
\end{proposition}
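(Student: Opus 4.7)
The plan is to extend the proof of Theorem \ref{main theorem gamma scalable} from the single excess subspace $V_0$ to the finite family $\{V_\ell\}_{\ell=-k}^{-1}$, exploiting the fact that each $V_\ell=\textnormal{span}\{x^{(\ell)}\}$ is one-dimensional, so that $\pi_{V_\ell}g=\langle g,x^{(\ell)}\rangle x^{(\ell)}$. Since $\mathcal{W}_\gamma$ remains a fusion Riesz basis, Proposition \ref{mitra sh.}(iii) continues to supply the key identity $\pi_{W_i}S_{\mathcal{W}_\gamma}^{-1}\pi_{W_j}=\gamma_i^{-2}\delta_{i,j}\pi_{W_j}$, which drives all three implications.

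For $(i)\Rightarrow(ii)$, I would start from $I_{\mathcal{H}}=S_{\mathcal{V}_\gamma}=\sum_{\ell=-k}^{-1}\gamma_\ell^{2}\pi_{V_\ell}+S_{\mathcal{W}_\gamma}$, left-multiply by $S_{\mathcal{W}_\gamma}^{-1}$ to obtain $S_{\mathcal{W}_\gamma}^{-1}=I_{\mathcal{H}}+\sum_\ell\gamma_\ell^{2}S_{\mathcal{W}_\gamma}^{-1}\pi_{V_\ell}$, and then sandwich this identity between $\pi_{W_i}$ on the left and $\pi_{W_j}$ on the right. Applying Proposition \ref{mitra sh.}(iii) to the $\pi_{W_i}S_{\mathcal{W}_\gamma}^{-1}\pi_{W_j}$ piece while keeping the $\pi_{W_i}\pi_{W_j}$ term intact, rearrangement yields exactly (ii). For $(ii)\Rightarrow(i)$, I would compute $S_{\mathcal{V}_\gamma}S_{\mathcal{W}}$ directly, inserting $\pi_{V_\ell}\pi_{W_j}=S_{\mathcal{W}_\gamma}S_{\mathcal{W}_\gamma}^{-1}\pi_{V_\ell}\pi_{W_j}$ and expanding $S_{\mathcal{W}_\gamma}=\sum_i\gamma_i^{2}\pi_{W_i}$; the bracket arising inside the resulting double sum over $i,j$ is precisely $\gamma_i^{-2}\delta_{i,j}\pi_{W_j}$ by (ii), so everything collapses to $\sum_j\pi_{W_j}=S_{\mathcal{W}}$, and invertibility of $S_{\mathcal{W}}$ forces $S_{\mathcal{V}_\gamma}=I_{\mathcal{H}}$.

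For $(ii)\Leftrightarrow(iii)$, I would combine the one-dimensionality formula $\pi_{V_\ell}\pi_{W_j}f=\langle\pi_{W_j}f,x^{(\ell)}\rangle x^{(\ell)}$ with the auxiliary identity $\pi_{W_i}S_{\mathcal{W}_\gamma}^{-1}x^{(\ell)}=\gamma_i^{-2}x_i^{(\ell)}$. The latter follows by writing $x^{(\ell)}=\sum_k x_k^{(\ell)}$ with $x_k^{(\ell)}\in W_k$, noting that $x_k^{(\ell)}=\pi_{W_k}x_k^{(\ell)}$, and then applying Proposition \ref{mitra sh.}(iii) summand by summand. Substituting this back into (ii) and clearing the common factor $\gamma_i^{-2}$ produces (iii), and the reverse direction is obtained by multiplying by $\gamma_i^{-2}$ again.

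The main obstacle is organizational rather than conceptual: one must verify that the single-$\ell$ manipulations from the $1$-excess argument decouple cleanly when summed over $\ell\in\{-k,\ldots,-1\}$. Because each term on the left of (ii) involves exactly one projection $\pi_{V_\ell}$—no cross terms between distinct $V_\ell$ and $V_{\ell'}$ appear—the summation passes through each step undisturbed and the proof closes verbatim along the lines of Theorem \ref{main theorem gamma scalable}. The dimensional pathologies analyzed in Corollary \ref{erer} and Theorem \ref{one dim}, which arise when excess elements may fall into higher-dimensional subspaces of the Riesz part, are ruled out here by hypothesis, which is precisely what makes the direct generalization possible.
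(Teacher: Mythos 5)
Your proposal is correct and follows essentially the same route the paper intends: the paper states this proposition without a separate proof, remarking only that it follows from the argument of Theorem \ref{main theorem gamma scalable} ``by a similar approach,'' and your outline is exactly that argument with the single projection $\gamma_0^2\pi_{V_0}$ replaced by the finite sum $\sum_{\ell}\gamma_\ell^2\pi_{V_\ell}$, with Proposition \ref{mitra sh.}(iii) again supplying the identities $\pi_{W_i}S_{\mathcal{W}_\gamma}^{-1}\pi_{W_j}=\gamma_i^{-2}\delta_{i,j}\pi_{W_j}$ and $\pi_{W_i}S_{\mathcal{W}_\gamma}^{-1}x^{(\ell)}=\gamma_i^{-2}x_i^{(\ell)}$. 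You also correctly observe that the one-dimensionality of the excess subspaces is built into \eqref{gbh} here rather than derived, and that the sum over $\ell$ passes through each step without cross terms, so no new ideas are needed.
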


It is worthy of note that by rearranging the excess elements among the subspaces of $\mathcal{W}$, one can achieve similar results to the above proposition. For instance, if the $k$-excess fusion frame $\mathcal{V}$ in Proposition \ref{1-excess general} is represented as
\begin{equation*}
\mathcal{V}= V_{0} \cup \mathcal{W},
\end{equation*}
where $V_{0}=\oplus_{\ell =-k}^{-1}V_{\ell}$, then (ii) and (iii) are derived as follows:
\begin{itemize}
\item[(ii)] $\gamma_{0}^{2}\pi_{W_{i}}S_{\mathcal{W}_{\gamma}}^{-1}\pi_{V_{0}}\pi_{W_{j}} =\left( \gamma_{i}^{-2}\delta_{i,j}-\pi_{W_{i}}\right)\pi_{W_{j}},$ for all $i , j \geq 1.$
\item[(iii)] $\gamma_{0}^{2}\sum_{\ell =-k}^{-1}\left\langle \pi_{W_{j}}f,x^{(\ell)} \right\rangle x^{(\ell)}_{i} =\left( \gamma_{i}^{2}\delta_{i,j}-\gamma_{i}^{2}\pi_{W_{i}}\right)\pi_{W_{j}}f,$
for all $f\in \mathcal {H}.$
\end{itemize}

The aim here is not to provide an exhaustive list of analogous conditions for the weight-scalability, as there are numerous situations in which the excess elements may be presented. The next result can be noted as a consequence of the proof of Theorem \ref{main theorem gamma scalable}.
\begin{corollary}\label{k-excess in one subspace}
Every $k$-excess fusion frame, whose all the redundant elements are in only one subspace of its Riesz part is not weight-scalable for any $k\geq 2$.
\end{corollary}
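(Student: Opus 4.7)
The plan is to reduce the argument to the $(i)\Rightarrow(ii)$ step in the proof of Theorem~\ref{main theorem gamma scalable}, carrying out the same reasoning with all $k$ excess vectors simultaneously. Under the hypothesis one writes, without loss of generality,
$$
\mathcal{V}=V_{1}\cup\{W_{i}\}_{i\geq 2},\qquad V_{1}=W_{1}\oplus\textnormal{span}\{x^{(1)},\dots,x^{(k)}\},
$$
where $\{W_{i}\}_{i\geq 1}$ is the Riesz part of $\mathcal{V}$ and the $k$ unit redundant vectors are all attached to the single subspace $W_{1}$, so $x^{(\ell)}\perp W_{1}$ for every $\ell=1,\dots,k$. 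I would assume, for a contradiction, that $\mathcal{V}$ is $\gamma$-scalable, so that $f=\gamma_{1}^{2}\pi_{V_{1}}f+\sum_{i\geq 2}\gamma_{i}^{2}\pi_{W_{i}}f$ holds for every $f\in\mathcal{H}$.

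First I would test this identity on an arbitrary nonzero $g\in W_{1}$. Because $W_{1}\subseteq V_{1}$ and $W_{1}\perp\textnormal{span}\{x^{(\ell)}\}_{\ell}$, one has $\pi_{V_{1}}g=g$, so the identity becomes $g=\gamma_{1}^{2}g+\sum_{i\geq 2}\gamma_{i}^{2}\pi_{W_{i}}g$. Exactly as in the original proof, the uniqueness of the Riesz decomposition of $\mathcal{W}$ then forces $\gamma_{1}=1$ and $\pi_{W_{i}}g=0$ for every $i\geq 2$, so that $W_{1}\perp W_{i}$ for all $i\geq 2$.

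Next I would substitute $f=x^{(\ell)}$ for each $\ell=1,\dots,k$. Since $x^{(\ell)}\in V_{1}$, $\pi_{V_{1}}x^{(\ell)}=x^{(\ell)}$; together with $\gamma_{1}=1$ the identity collapses to $\sum_{i\geq 2}\gamma_{i}^{2}\pi_{W_{i}}x^{(\ell)}=0$, and a second application of Riesz uniqueness yields $\pi_{W_{i}}x^{(\ell)}=0$ for every $i\geq 2$. Coupled with $x^{(\ell)}\perp W_{1}$, this shows that $x^{(\ell)}$ is orthogonal to every $W_{i}$ in $\mathcal{W}$; since the closed linear span of $\bigcup_{i}W_{i}$ equals $\mathcal{H}$, we conclude $x^{(\ell)}=0$, contradicting $\Vert x^{(\ell)}\Vert=1$. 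The main obstacle I anticipate is purely at the setup stage --- one must justify the reformulation $V_{1}=W_{1}\oplus\textnormal{span}\{x^{(1)},\dots,x^{(k)}\}$ starting from the phrase ``in only one subspace of its Riesz part''; once that reduction is in hand, the Riesz decomposition argument dispatches all $k$ redundant vectors uniformly, with no extra interaction among them to control beyond what is already present in the $k=1$ case.
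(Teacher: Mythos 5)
Your argument is correct and is essentially the paper's own proof: the paper derives this corollary directly from the $(i)\Rightarrow(ii)$ step of Theorem~\ref{main theorem gamma scalable}, which is exactly the contradiction you reproduce, run simultaneously for the $k$ excess vectors. The only cosmetic difference is at the last step, where the paper concludes from $S_{\mathcal{W}_{\gamma}}x=0$ while you conclude from $\pi_{W_{i}}x^{(\ell)}=0$ for all $i$ together with completeness of the Riesz part; the two are equivalent here.
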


In the sequel, we focus on the weight-scalability of $k$-excess fusion frames whose the redundant vectors are located in more than one subspace of their Riesz part.

\begin{theorem}\label{non-orth. scalable equality}
Let $\mathcal{W}=\{W_{i}\}_{i=1}^{\infty}$ be a fusion Riesz basis for $\mathcal {H}.$ Take
\begin{align*}
\begin{cases}
V_{1}= W_{1}+ Z_{2},
\\
V_{2}= W_{2}+ Z_{1},
\end{cases}
\end{align*}
where $Z_{i}$ is a non zero closed subspace of $ W_{i}$ for $i=1,2$ and $\mathcal{V}=\left\lbrace V_{1},V_{2},W_{i}\right\rbrace _{i =3}^{\infty}$ constitutes a fusion frame for $\mathcal {H}.$ Then the following statements hold.
\begin{itemize}
\item [(i)] If $\mathcal{V}$ is weight-scalable, then $Z_{i}= W_{i}$ for $i=1,2$.

\item [(ii)] If $Z_{i}= W_{i}$ for $i=1,2,~\{W_{j}\}_{j=3}^{\infty}$ is orthogonal and $(W_{1}+W_{2})\perp W_{j}$ for all $j\geq 3$, then $\mathcal{V}$ is weight-scalable.

\end{itemize}
\end{theorem}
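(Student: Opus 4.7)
The strategy is to dispatch (ii) with an explicit choice of weights and to attack (i) by probing the scalability identity with two carefully chosen families of test vectors, using Proposition \ref{mitra sh.}(iii) as the main algebraic lever. For (ii), the hypotheses $Z_i = W_i$ collapse the two augmented subspaces to $V_1 = V_2 = W_1 + W_2$, while the orthogonality assumptions yield the orthogonal decomposition $\mathcal{H} = (W_1 + W_2) \oplus \bigoplus_{j\geq 3}^{\perp} W_j$. The choice $\gamma_1 = \gamma_2 = 1/\sqrt{2}$ and $\gamma_j = 1$ for $j \geq 3$ then produces
\[
S_{\mathcal{V}_{\gamma}} = \pi_{W_1+W_2} + \sum_{j\geq 3}\pi_{W_j} = I_{\mathcal{H}},
\]
so $\mathcal{V}$ is weight-scalable.

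For (i), suppose $\mathcal{V}$ is $\gamma$-scalable, so that $\gamma_1^2 \pi_{V_1} + \gamma_2^2 \pi_{V_2} + \sum_{i\geq 3}\gamma_i^2 \pi_{W_i} = I_{\mathcal{H}}$. Since $\mathcal{W}$ is a $1$-uniform fusion Riesz basis, Proposition \ref{mitra sh.}(iii) delivers $\pi_{W_i} S_{\mathcal{W}}^{-1}\pi_{W_j} = \delta_{ij}\pi_{W_j}$. Consequently the operator $\pi_{W_1}S_{\mathcal{W}}^{-1}$ acts as the identity on $W_1$ and annihilates every vector belonging to some $W_j$ with $j\neq 1$; this is the projection I would use to strip away unwanted terms. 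Testing the scalability identity on $f \in Z_1$ — which lies in $W_1 \subseteq V_1$ and in $V_2$, hence satisfies $\pi_{V_1}f = \pi_{V_2}f = f$ — and then applying $\pi_{W_1}S_{\mathcal{W}}^{-1}$ kills the sum $\sum_{i\geq 3}\gamma_i^2 \pi_{W_i}f$ and recovers $f$, yielding $(1-\gamma_1^2-\gamma_2^2)f = 0$. Thus $\gamma_1^2 + \gamma_2^2 = 1$, and plugging back and pairing with $f$ shows $\pi_{W_i}f = 0$ for every $i\geq 3$, i.e.\ $Z_1 \perp W_i$ for $i\geq 3$ (and symmetrically for $Z_2$).

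To force $Z_1 = W_1$, I would argue by contradiction: pick $0 \neq f \in W_1 \ominus Z_1$, so $\pi_{V_1}f = f$. Combined with $\gamma_1^2 + \gamma_2^2 = 1$, the scalability identity rearranges to $\gamma_2^2\bigl(f - \pi_{V_2}f\bigr) = \sum_{i\geq 3}\gamma_i^2 \pi_{W_i}f$. Because $W_1 \cap W_2 = \{0\}$ by the Riesz decomposition property, $V_2 = W_2 \oplus Z_1$ is a topological direct sum, so $\pi_{V_2}f = w + z$ uniquely with $w \in W_2$ and $z \in Z_1 \subseteq W_1$. Applying $\pi_{W_1}S_{\mathcal{W}}^{-1}$ once more wipes out the right-hand sum and the $W_2$-component $w$, fixes $z$ and $f$, and leaves $\gamma_2^2(f - z) = 0$; but $f \perp Z_1$ while $z \in Z_1$, so inner product with $z$ forces $\|z\|^2 = 0$ and hence $f = 0$, contradicting the choice. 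The claim $Z_2 = W_2$ follows by exchanging roles.

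The main technical obstacle lies in the final step: the oblique projection $\pi_{V_2}$ onto the non-orthogonal sum $W_2 + Z_1$ admits no clean closed form, and no orthogonality between $W_1$ and $W_2$ is available under the fusion Riesz basis hypothesis alone. The key observation that bypasses this difficulty is that one never needs the explicit value of $\pi_{V_2}f$ — only the fact that $\pi_{V_2}f$ decomposes as $w + z$ with $w \in W_2$ and $z \in Z_1 \subseteq W_1$ — and then Proposition \ref{mitra sh.}(iii) mechanically isolates the offending $Z_1$-component.
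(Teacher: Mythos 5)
Your proof is correct and follows essentially the same route as the paper's: part (ii) is the identical explicit choice $\gamma_1=\gamma_2=1/\sqrt{2}$, $\gamma_j=1$, and part (i) uses the same test vectors (an element of $Z_1\subseteq V_1\cap V_2$ to get $\gamma_1^2+\gamma_2^2=1$, then a nonzero $f\in W_1\cap Z_1^{\perp}$ driven into $Z_1$ for the contradiction). The only cosmetic difference is that you extract the $W_1$-coordinate by applying $\pi_{W_1}S_{\mathcal{W}}^{-1}$ via Proposition \ref{mitra sh.}(iii), whereas the paper invokes the uniqueness of the Riesz decomposition directly — these are the same mechanism in different clothing.
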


\begin{proof}
(i) Since $\mathcal{V}$ is $\gamma$-scalable, then there exists a sequence of weights $\gamma=\{\gamma_{i}\}_{i=1}^{\infty}$ such that \eqref{00100} holds. It implies that $\gamma_{1}^{2}+\gamma_{2}^{2}=1,$ as $V_{1}\cap V_{2}\neq\{0\}$. By contrary, assume that $Z_{1}\varsubsetneq W_{1}.$ It is not difficult to see that $W_{1}\cap Z_{1}^{\perp}\neq \{0\}.$ Hence, there exists a non zero vector $g \in W_{1}\cap Z_{1}^\perp,$ so that
\begin{align*}
g&=\gamma_{1}^{2}g+\gamma_{2}^{2}\pi_{V_{2}}g+\sum_{i =3}^{\infty}\gamma_{i}^{2}\pi_{W_{i}}g
\\&=\gamma_{1}^{2}g+\gamma_{2}^{2}\pi_{V_{2}}g,
\end{align*}
where the last equality is due to the Riesz decomposition property of $\mathcal {W}.$ Hence, $\pi_{V_{2}}g=g$ and thereby $g \in V_{2}=W_{2}+Z_{1}.$ Thus, there exist $w \in W_{2}$ and $z \in Z_{1}$ such that $g=w+z$. Since $g-z=w \in W_{1}\cap W_{2}$, we conclude that $g-z=0.$ So $g=z \in Z_{1},$ which is a contradiction. Therefore, $Z_{1}=W_{1}$ and by a similar argument $Z_{2}=W_{2}$ as well.

(ii) Suppose that $V_{i}= W_{1}+W_{2}$ for $i=1,2$. Taking $\gamma_{1},\gamma_{2}$ satisfying $ \gamma_{1}^{2}+\gamma_{2}^{2}=1$ and $\gamma_{j}=1$ for $j\geq 3$, we get
\begin{align*}
S_{\mathcal{V}_{\gamma}}&=(\gamma_{1}^{2}+\gamma_{2}^{2})\pi_{(W_{1}+W_{2})}+\sum_{j =3}^{\infty}\gamma_{j}^{2}\pi_{W_{j}}
\\&=\pi_{(W_{1}+W_{2})}+\sum_{j =3}^{\infty}\pi_{W_{j}}=\pi_{(W_{1}+W_{2})}+\pi_{\oplus_{j=3}^\infty W_{j}}= I_{\mathcal{H}}.
\end{align*}
\end{proof}

\begin{corollary}\label{orthogonality case}
Let $\mathcal{W}$ be an orthogonal fusion Riesz basis for $\mathcal{H}$. Then $\mathcal{V},$ introduced in Theorem \ref{non-orth. scalable equality} is weight-scalable if and only if $Z_{i}= W_{i}$ for $i=1,2$.
\end{corollary}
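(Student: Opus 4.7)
The plan is to derive this corollary as a direct consequence of Theorem \ref{non-orth. scalable equality}, checking that both implications follow immediately from parts (i) and (ii) of that theorem once the orthogonality hypothesis on $\mathcal{W}$ is taken into account.

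For the forward direction, suppose $\mathcal{V}$ is weight-scalable. The orthogonality of $\mathcal{W}$ is not even required here: Theorem \ref{non-orth. scalable equality}(i) states that any fusion Riesz basis $\mathcal{W}$ for which $\mathcal{V} = \{V_1, V_2, W_i\}_{i=3}^{\infty}$ is weight-scalable must satisfy $Z_i = W_i$ for $i = 1, 2$. So this implication is obtained verbatim.

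For the reverse direction, I would verify that when $\mathcal{W}$ is orthogonal, the three hypotheses of Theorem \ref{non-orth. scalable equality}(ii) are automatically met. Namely, assuming $Z_i = W_i$ for $i = 1, 2$: the subfamily $\{W_j\}_{j=3}^{\infty}$ inherits orthogonality from $\mathcal{W}$, and for every $j \geq 3$ we have $W_1 \perp W_j$ and $W_2 \perp W_j$, which together yield $(W_1 + W_2) \perp W_j$. Hence Theorem \ref{non-orth. scalable equality}(ii) applies and $\mathcal{V}$ is weight-scalable with the explicit scaling weights $\gamma_1, \gamma_2$ satisfying $\gamma_1^2 + \gamma_2^2 = 1$ and $\gamma_j = 1$ for $j \geq 3$.

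There is no real obstacle: the corollary is essentially a specialization of Theorem \ref{non-orth. scalable equality} in which the orthogonality of $\mathcal{W}$ makes the supplementary hypotheses of part (ii) automatic, so the characterization collapses to the single condition $Z_i = W_i$. The only point meriting a brief sentence in the write-up is the observation that Theorem \ref{non-orth. scalable equality}(i) holds for arbitrary fusion Riesz bases, so the necessity half of the biconditional does not require the orthogonality assumption at all.
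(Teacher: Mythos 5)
Your proposal is correct and follows exactly the route the paper intends: the paper states this corollary without proof as an immediate specialization of Theorem \ref{non-orth. scalable equality}, with necessity coming from part (i) (which indeed needs no orthogonality) and sufficiency from part (ii), whose extra hypotheses are automatic when $\mathcal{W}$ is orthogonal. Your observation that $(W_1+W_2)\perp W_j$ follows from $W_1\perp W_j$ and $W_2\perp W_j$ is the only verification needed, and you have it.
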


\begin{example}
Consider the 1-excess fusion frame $\mathcal{V}$ introduced in Example \ref{1-excess example}. Take $Z_{1}=\overline{\textnormal{span}} \{e_{i}\}_{i\geq -n}$ and $Z_{2}=\overline{\textnormal{span}} \{e_{i}\}_{i\leq m}$, for every $n,m \in \Bbb{N}$. Then $\mathcal{Z}=\{Z_{i}\}_{i=1}^2$ is a dual fusion frame of $\mathcal{V}$ with $e(\mathcal{Z})=n+m+1,$ see \cite{excess of fusion} for more details. Clearly, $\mathcal{Z}$ can be expressed as
\begin{align*}
\begin{cases}
Z_{1}= \overline{\textnormal{span}} \{e_{i}\}_{i\geq 1} \oplus \textnormal{span} \{e_{i}\}_{i=-n}^{0},
\\
Z_{2}= \overline{\textnormal{span}} \{e_{i}\}_{i< 1} \oplus \textnormal{span} \{e_{i}\}_{i=1}^{m}.
\end{cases}
\end{align*}
Thus, Corollary \ref{orthogonality case} assures that $\mathcal{Z}$ is weight-scalable if and only if $n=m=\infty.$
\end{example}

The following result states that under which conditions the $2$-excess fusion frame $\mathcal{V}$ given by \eqref{every 2 excess} is weight-scalable. The proof follows immediately from Theorem \ref{one dim}, Theorem \ref{non-orth. scalable equality} and Corollary \ref{orthogonality case}. 

\begin{corollary}\label{2-excess scalability}
Let $\mathcal{W}=\{W_{i}\}_{i=1}^\infty$ be a fusion Riesz basis for $\mathcal{H},~x\in W_{2}$ and $y\in W_{1}.$ If the $2$-excess fusion frame $\mathcal{V}$ defined by \eqref{every 2 excess} is weight-scalable, then $V_{1}=V_{2}=\textnormal{span}\{x,y\}.$  In particular, the converse holds if $\{W_{j}\}_{j=3}^{\infty}$ is orthogonal and $V_{1} \perp W_{j}$ for all $j\geq 3.$
\end{corollary}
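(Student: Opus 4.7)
The plan is to recognize that the hypotheses $x\in W_2$ and $y\in W_1$ place the $2$-excess fusion frame $\mathcal{V}=\{V_1,V_2,W_i\}_{i\geq 3}$ exactly into the template of Theorem \ref{non-orth. scalable equality}, with $Z_2:=\textnormal{span}\{x\}\subseteq W_2$ and $Z_1:=\textnormal{span}\{y\}\subseteq W_1$; the whole statement then reduces to invoking parts (i) and (ii) of that theorem. The Riesz decomposition property of $\mathcal{W}$ guarantees $W_1\cap W_2=\{0\}$, so the sums $V_\ell=W_\ell+Z_{3-\ell}$ are in fact direct and genuinely match the setup in Theorem \ref{non-orth. scalable equality}.

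For the forward direction, I would combine weight-scalability of $\mathcal{V}$ with Theorem \ref{non-orth. scalable equality}(i) to force $Z_1=W_1$ and $Z_2=W_2$. The same conclusion is also available from Theorem \ref{one dim}(3), which directly yields $\dim W_1=\dim W_2=1$ when $x\in W_2$ and $y\in W_1$ are unit vectors. Either way, $W_1=\textnormal{span}\{y\}$ and $W_2=\textnormal{span}\{x\}$; substituting back into $V_\ell=W_\ell+\textnormal{span}\{x^{(\ell)}\}$ gives $V_1=V_2=\textnormal{span}\{x,y\}$.

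For the converse, I would first extract the same one-dimensionality of $W_1,W_2$ from the assumption $V_1=V_2=\textnormal{span}\{x,y\}$: since $V_1=W_1\oplus\textnormal{span}\{x\}$ has dimension $\dim W_1+1\leq 2$ and $0\neq y\in W_1$, one obtains $W_1=\textnormal{span}\{y\}$ and, symmetrically, $W_2=\textnormal{span}\{x\}$. Hence $V_1=W_1+W_2$, so $\mathcal{V}$ again fits Theorem \ref{non-orth. scalable equality} with $Z_i=W_i$. The extra hypotheses — orthogonality of $\{W_j\}_{j\geq 3}$ and $V_1\perp W_j$ for $j\geq 3$ — are precisely those required by part (ii) of that theorem, and the weight-scalability of $\mathcal{V}$ follows. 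Equivalently, under these assumptions the entire family $\mathcal{W}$ becomes an orthogonal fusion Riesz basis, so Corollary \ref{orthogonality case} yields the same conclusion directly.

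I do not expect a serious obstacle here: the statement is essentially a bookkeeping consolidation of the earlier machinery. The only care point is the translation between the internal direct-sum notation $W_\ell\oplus\textnormal{span}\{x^{(\ell)}\}$ of \eqref{every 2 excess} and the subspace-sum notation $W_\ell+Z_{3-\ell}$ of Theorem \ref{non-orth. scalable equality}, and verifying that the unit-vector assumption on $x,y$ together with $W_1\cap W_2=\{0\}$ lets one identify $\textnormal{span}\{x,y\}$ with $W_1+W_2$ in both directions; once this identification is made, the cited results finish the argument immediately.
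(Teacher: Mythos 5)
Your argument is correct and follows exactly the route the paper intends: the paper gives no written proof beyond stating that the corollary ``follows immediately from Theorem \ref{one dim}, Theorem \ref{non-orth. scalable equality} and Corollary \ref{orthogonality case}'', and your proposal is precisely the bookkeeping that realizes this, identifying $Z_1=\textnormal{span}\{y\}$, $Z_2=\textnormal{span}\{x\}$ and applying parts (i) and (ii) of Theorem \ref{non-orth. scalable equality}. The only point worth recording is that your closing remark that $\mathcal{W}$ becomes orthogonal relies on the standing convention in \eqref{every 2 excess} that $x^{(\ell)}\perp W_{\ell}$, which together with $x\in W_{2}$, $y\in W_{1}$ forces $\left\langle x,y\right\rangle=0$; with that observation the alternative appeal to Corollary \ref{orthogonality case} is also legitimate.
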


Obviously, if $\mathcal{W}$ is orthogonal in the above corollary, then $\mathcal{V}$ is weight-scalable if and only if $V_{1}=V_{2}=\textnormal{span}\{x,y\}.$ Notice that Theorem \ref{one dim}(3) can be generalized to every $k$-excess fusion frame for $k >2$ whose the redundant vectors are in only two subspaces of its Riesz part.

\begin{theorem}\label{dim Wi}
Let $\mathcal{V}=\{V_{i}\}_{i=1}^{2} \cup \{W_{i}\}_{i=3}^{\infty}$ be a weight-scalable fusion frame for $\mathcal{H}$ such that $W_{i} \varsubsetneq V_{i}$ for $i=1,2$ and $\{W_{i}\}_{i=1}^{\infty}$ is a fusion Riesz basis. Then $W_{1}\perp W_{2}.$ Moreover, the following statements hold:
\begin{itemize}
\item[(i)] If $\gamma_{1}=1,$ then $\gamma_{2}=1,~V_{1}\perp W_{2}$ and $W_{2}\perp W_{i}$ for all $i\geq 3.$
\item[(ii)] If $\gamma_{2}=1,$ then $\gamma_{1}=1,~V_{2}\perp W_{1}$ and $W_{1}\perp W_{i}$ for all $i\geq 3.$
\item[(iii)] If $\gamma_{1}\neq 1~(\gamma_{2}\neq 1),$ then $\textnormal{dim}W_{1}\leq \textnormal{dim}(V_{2}\smallsetminus W_{2})~\big(\textnormal{dim}W_{2}\leq \textnormal{dim}(V_{1}\smallsetminus W_{1})\big)$.
\end{itemize}
\end{theorem}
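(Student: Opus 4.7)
The plan is to exploit the Parseval identity
$$I_{\mathcal{H}} = \gamma_1^2 \pi_{V_1} + \gamma_2^2 \pi_{V_2} + \sum_{i \geq 3} \gamma_i^2 \pi_{W_i}$$
guaranteed by the weight-scalability of $\mathcal{V}$. Writing each $V_\ell = W_\ell \oplus Z_\ell$ as the $\mathcal{H}$-orthogonal decomposition (so $Z_\ell$ is the orthogonal complement of $W_\ell$ inside $V_\ell$), this identity rearranges as $S_{\mathcal{W}_\gamma} = I_{\mathcal{H}} - \gamma_1^2 \pi_{Z_1} - \gamma_2^2 \pi_{Z_2}$, where $\mathcal{W}_\gamma = \{(W_i, \gamma_i)\}$ is still a fusion Riesz basis (same subspaces as $\mathcal{W}$). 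Throughout, $P_j$ denotes the bounded Riesz-decomposition projection onto $W_j$ furnished by $\mathcal{W}$ (so $f=\sum_j P_j f$ with $P_j f\in W_j$); these are in general not orthogonal, and I will juxtapose them with the orthogonal projections of the Parseval identity.

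For the orthogonality $W_1 \perp W_2$: apply the Parseval identity to $f \in W_1$ to obtain $(1-\gamma_1^2) f = \gamma_2^2 \pi_{V_2} f + \sum_{i\geq 3}\gamma_i^2 \pi_{W_i} f$; hitting both sides with $P_2$ annihilates the left side and every tail term, forcing $P_2(\pi_{V_2} f) = 0$. Splitting $\pi_{V_2} f = \pi_{W_2} f + \pi_{Z_2} f$ and combining this with $\pi_{Z_2}\pi_{W_2}=0$ and the identity $\pi_{W_1} S_{\mathcal{W}_\gamma}^{-1} \pi_{W_2}=0$ from Proposition \ref{mitra sh.}(iii) applied to $\mathcal{W}_\gamma$, one forces $\pi_{W_2} f = 0$ for every $f \in W_1$, giving $W_1 \perp W_2$.

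For part (i) (part (ii) is symmetric), assume $\gamma_1 = 1$. For any $f \in V_1$, $\pi_{V_1} f = f$ and the identity collapses to $0 = \gamma_2^2 \pi_{V_2} f + \sum_{i\geq 3}\gamma_i^2 \pi_{W_i} f$; pairing with $f$ and using non-negativity of each summand forces $\pi_{V_2} f = 0$ and $\pi_{W_i} f = 0$ for all $i\geq 3$. Hence $V_1\perp V_2$ (in particular $V_1 \perp W_2$) and $V_1\perp W_i$ for $i\geq 3$. Now for $f\in W_2\subset V_2$ we have $\pi_{V_1} f=0$, so the identity reduces to $(1-\gamma_2^2) f = \sum_{i\geq 3}\gamma_i^2 \pi_{W_i} f$: the left side lies in $W_2$ and the right in $\overline{\bigoplus}_{i\geq 3}W_i$, and the uniqueness of the Riesz decomposition forces both to vanish. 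This yields $\gamma_2=1$ and, after an inner-product step with $f$, $\pi_{W_i} f=0$ for $i\geq 3$, i.e.\ $W_2\perp W_i$ for $i\geq 3$.

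For (iii) with $\gamma_1 \neq 1$, the orthogonality $W_1 \perp W_2$ just proved gives $\pi_{V_2} f = \pi_{Z_2} f$ for $f \in W_1$, so $(1-\gamma_1^2) f = \gamma_2^2 \pi_{Z_2} f + \sum_{i\geq 3}\gamma_i^2 \pi_{W_i} f$. The map $\Psi: W_1\to Z_2$ given by $\Psi(f)=\pi_{Z_2} f$ is then injective: if $\Psi(f)=0$, the left side lies in $W_1$ and the right in $\overline{\bigoplus}_{i\geq 3}W_i$, so applying $P_1$ kills the right side but preserves the left, forcing $(1-\gamma_1^2) f=0$ and hence $f=0$. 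Therefore $\dim W_1\le\dim Z_2=\dim(V_2\setminus W_2)$, and the parallel inequality follows by swapping the roles of $1$ and $2$. The main obstacle is making the $W_1\perp W_2$ step airtight, as reconciling the non-orthogonal $P_j$ with the orthogonal projections appearing in the Parseval identity requires careful operator bookkeeping with both $S_{\mathcal{W}_\gamma}=I-\gamma_1^2\pi_{Z_1}-\gamma_2^2\pi_{Z_2}$ and the Proposition \ref{mitra sh.}(iii) relations.
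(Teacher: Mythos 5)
Your overall architecture --- the Parseval identity $I_{\mathcal H}=\gamma_1^2\pi_{V_1}+\gamma_2^2\pi_{V_2}+\sum_{i\geq 3}\gamma_i^2\pi_{W_i}$, the interplay between the orthogonal projections and the non-orthogonal Riesz-decomposition projections $P_j$, and the injectivity argument for (iii) --- is the same as the paper's, and parts (i) and (iii) are sound \emph{conditional on} $W_1\perp W_2$. (Your version of (iii), killing the tail $\sum_{i\geq 3}\gamma_i^2\pi_{W_i}f$ with $P_1$ to get injectivity of $f\mapsto\pi_{Z_2}f$ on $W_1$, is in fact a cleaner rendering than the paper's, which simply drops those terms.)

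The genuine gap is exactly where you flag it: the proof of $W_1\perp W_2$. From $P_2\pi_{V_2}f=0$ and $\pi_{V_2}=\pi_{W_2}+\pi_{Z_2}$ you obtain, since $\pi_{W_2}f\in W_2$ is its own $P_2$-image, $\pi_{W_2}f=-P_2\pi_{Z_2}f$; so what you must actually show is $P_2\pi_{Z_2}f=0$, i.e.\ that the $W_2$-component \emph{in the Riesz decomposition} of the vector $\pi_{Z_2}f\in Z_2$ vanishes. Your choice $Z_2=V_2\ominus W_2$ only gives $\pi_{W_2}\pi_{Z_2}=0$ (orthogonality in $\mathcal H$), which says nothing about the Riesz component $P_2=\gamma_2^2\pi_{W_2}S_{\mathcal W_\gamma}^{-1}$ of elements of $Z_2$: when the $W_i$ are not mutually orthogonal, a vector orthogonal to $W_2$ can have a nonzero Riesz $W_2$-component. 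The two identities you invoke, $\pi_{Z_2}\pi_{W_2}=0$ and $\pi_{W_1}S_{\mathcal W_\gamma}^{-1}\pi_{W_2}=0$, involve the wrong compositions (the second has $\pi_{W_2}$ on the right, whereas you need to control $\pi_{W_2}S_{\mathcal W_\gamma}^{-1}\pi_{Z_2}$), so ``one forces $\pi_{W_2}f=0$'' is an assertion, not a derivation. The paper closes precisely this hole by a different device: it \emph{chooses} the complement $Z_2$ of $W_2$ in $V_2$ so that every element of $Z_2$ has zero $W_2$-component in the Riesz decomposition ($P_2Z_2=\{0\}$ by construction), after which the $W_2$-Riesz-component of the right-hand side is exactly $\gamma_2^2\pi_{W_2}g$ and orthogonality is immediate. (Note that this normalization is in tension with the splitting $\pi_{V_2}=\pi_{W_2}+\pi_{Z_2}$, which forces $Z_2$ to be the orthogonal complement of $W_2$ in $V_2$; reconciling the two is the real work here.) As written, the foundational orthogonality claim --- on which both (i) and (iii) rely --- is not established.
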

\begin{proof}
Without losing the generality, we may choose subspaces $Z_{1}$ and $Z_{2}$ such that $\textnormal{dim} Z_{i}=\textnormal{dim}(V_{i}\smallsetminus W_{i})$ for $i=1,2$ and 
\begin{align*}
\begin{cases}
V_{1}= W_{1} \oplus Z_{1},
\\
V_{2}= W_{2} \oplus Z_{2}.
\end{cases}
\end{align*}
Furthermore, the first and second components of the elements of $Z_{1}$ and $Z_{2}$, respectively, are equal to zero in the Riesz decomposition.
Since $\mathcal{V}$ is $\gamma$-scalable, then there exists a sequence of weights $\gamma=\{\gamma_{i}\}_{i=1}^\infty$ such that \eqref{00100} holds. For every $ g\in W_{1}$ in \eqref{00100}, we have
\begin{equation}\label{main}
g=\gamma_{1}^{2}g+\gamma_{2}^{2}\pi_{W_{2}}g+\gamma_{2}^{2}\pi_{Z_{2}}g+\sum_{i =3}^{\infty}\gamma_{i}^{2}\pi_{W_{i}}g.
\end{equation}
Applying the Riesz decomposition property of $\mathcal{W},$ we infer that $\gamma_{2}^{2}\pi_{W_{2}}g=0$ and so $W_{1}\perp W_{2}.$ Now, for the ``moreover'' part:

(i) We note that \eqref{main} also holds for all $g\in V_{1}.$ Thus, if $\gamma_{1}=1,$ then the Riesz decomposition property of $\mathcal{W}$ implies that $V_{1}\perp W_{2}.$ Hence, putting a non zero vector $ f\in W_{2}$ in \eqref{00100}, we obtain $f=\gamma_{2}^{2}f+\sum_{i =3}^{\infty}\gamma_{i}^{2}\pi_{W_{i}}f.$ It follows that $\gamma_{2}=1$ and $W_{2}\perp W_{i}$ for all $i\geq 3.$  The proof of (ii) is analogous to that of (i).

(iii) If $\gamma_{1}\neq 1,$ then \eqref{main} gives $g=\gamma_{1}^{2}g+\gamma_{2}^{2}\pi_{Z_{2}}g.$ Hence, we get $g=\frac{\gamma_{2}^{2}}{1-\gamma_{1}^{2}}\pi_{Z_{2}}g,$ which ensures that
\begin{equation*}
\textnormal{dim}W_{1}\leq \textnormal{dim}Z_{2}=\textnormal{dim}(V_{2}\smallsetminus W_{2}).
\end{equation*}
In the case where $\gamma_{2}\neq 1,$ by means of an analogous argument, it is proved that $\textnormal{dim}W_{2}\leq \textnormal{dim}Z_{1}=\textnormal{dim}(V_{1}\smallsetminus W_{1}).$ This completes the proof.
\end{proof}

Suppose that $\{e_{i}\}_{i=1}^{\infty}$ is an orthonormal basis for $\mathcal {H},~W_{1}=\textnormal{span}\{e_{1}\}$ and $W_{2}=\textnormal{span}\{e_{2},e_{3}\}.$ Consider 
\begin{align*}
V_{1}&= W_{1} \oplus \overline{\textnormal{span}}\{e_{2},e_{3},e_{2i}+e_{2i+1}\}_{i=2}^{\infty},
\\
V_{2}&= W_{2} \oplus \overline{\textnormal{span}}\{e_{1},e_{2i}-e_{2i+1}\}_{i=2}^{\infty},
\\
W_{3}&= \textnormal{span}\{e_{i}\}_{i=4}^{\infty},
\end{align*}
where $\{W_{i}\}_{i=1}^{3}$ is a fusion orthonormal basis. Then $\mathcal{V}=\{V_{1},V_{2},W_{3}\}$ constitutes an infinite-excess 2-tight fusion frame for $\mathcal {H}.$ Hence, it follows that $\mathcal{V}$ is $\gamma$-scalable by $\gamma_{i}=\sqrt{1/2}$ for $1\leq i\leq 3.$ Obviously, $\textnormal{dim}W_{i}< \textnormal{dim}(V_{j}\smallsetminus W_{j})$ for distinct indexes $i,j=1,2,$ which confirms Theorem \ref{dim Wi}.

\textbf{Conflict-of-interest.} This work does not have any conflicts of interest.
 
\bibliographystyle{amsplain}

\end{document}